\documentclass[reqno, 11pt]{article}%{amsart}

% PAGE Environments ------------------------------------------------------
%\usepackage[top=4cm, bottom=2.7cm, left=3cm, right=3cm]{geometry}
\usepackage{fullpage}

\usepackage[centertags]{amsmath}
\usepackage{amsmath}
\usepackage{amsfonts}
\usepackage{amssymb}
\usepackage{amsthm}
\usepackage{enumerate}

\usepackage{newlfont}
\usepackage{color}

\usepackage{authblk}

\usepackage{stmaryrd}
\SetSymbolFont{stmry}{bold}{U}{stmry}{m}{n}

\usepackage{bbm}
\include{amsthm_sc}

\usepackage{stmaryrd}
\usepackage{mathrsfs}

\usepackage{fancyhdr}
\usepackage{graphicx}
\usepackage{fancybox}
\usepackage{setspace}
\usepackage{cleveref}

% THEOREM Environments ---------------------------------------------------
\newtheorem{thm}{Theorem}
\newtheorem{cor}[thm]{Corollary}
\newtheorem{lem}[thm]{Lemma}

\newtheorem{cond}[thm]{Condition}
\theoremstyle{definition}
\newtheorem{defn}[thm]{Definition}

\theoremstyle{remark}
\newtheorem{rem}[thm]{Remark}

% User Defined Commands --------------------------------------------------

\newcommand{\R} {\mathbb{R}}
\newcommand{\C} {\mathbb{C}}

\newcommand{\E} {\mathbb{E}}

\newcommand{\p} {\mathbb{P}}
 %hermitean adjoint

%Dirac bra-c-ket notation

\DeclareMathOperator{\diag}{diag}

\DeclareMathOperator{\Tr}{Tr}

\DeclareMathOperator{\supp}{supp}

\DeclareMathOperator{\re}{\mathrm{Re}}
\DeclareMathOperator{\im}{\mathrm{Im}}

%\addtolength{\textwidth}{1in} \addtolength{\hoffset}{-0.5in}
%\addtolength{\textwidth}{1in} \addtolength{\hoffset}{-0.5in}
%\addtolength{\textheight}{1in} \addtolength{\voffset}{-0.5in}

\newcommand{\caN}{{\mathcal N}}

\newcommand{\caP}{{\mathcal P}}

% \MATHBB - \bb -----------------------------------------------------

% \MATHFRAK - \fr ---------------------------------------------------

%\newcommand{\frq}{{\mathfrak q}}

% \Mathscr - \scr -------------------------------------------------

% \BOLDSYMBOL - \bs -------------------------------------------------

%\newcommand{\bsc}{{\boldsymbol c}}

\newcommand{\bss}{{\boldsymbol s}}

\newcommand{\bsv}{{\boldsymbol v}}

\newcommand{\bsx}{{\boldsymbol x}}
\newcommand{\bsy}{{\boldsymbol y}}

% MISCELLANEOUS -----------------------------------------------------
\newcommand{\wt}{\widetilde}

\newcommand{\beq}{ \begin{equation} }
\newcommand{\eeq}{ \end{equation} }

\newcommand{\lone}{\mathbbm{1}} 

\newcommand{\dd}{\mathrm{d}}
\newcommand{\ii}{\mathrm{i}}

\renewcommand{\bss}{\boldsymbol{\sigma}}

\newcommand{\mean}{M}

% SECTION Environments ---------------------------------------------------
\numberwithin{equation}{section} %\numberwithin{lem}{section}
\numberwithin{thm}{section}

\title{Fluctuations of the free energy of the spherical Sherrington--Kirkpatrick model} %spin glass model}

\author{Jinho Baik\footnote{Department of Mathematics, University of Michigan,
Ann Arbor, MI, 48109, USA \newline email: \texttt{baik@umich.edu}}
and Ji Oon Lee\footnote{Department of Mathematical Sciences, KAIST, Daejeon, 305701, Korea
\newline email: \texttt{jioon.lee@kaist.edu}}}

\date{\today}

\begin{document}

\maketitle

\begin{abstract}
The spherical Sherrington--Kirkpatrick model is a spherical mean field model for spin glass. 
We consider the fluctuations of the free energy at arbitrary non-critical temperature for the 2-spin model with no magnetic field.  
We show that in the high temperature regime the law of the fluctuations converges to the Gaussian distribution just like in the Sherrington-Kirtkpatrick model. 
We show, on the other hand, that the law of the fluctuations is given by the GOE Tracy-Widom distribution in the low temperature regime.
The orders of the fluctuations are markedly different in these two regimes.  
A universality of the limit law is also proved. 
\end{abstract}

\section{Introduction}

The spherical Sherrington--Kirkpatrick (SSK) model (with $2$-spin interaction and no magnetic field) is defined by the Hamiltonian
\beq \label{SSK Hamiltonian}
	H_N(\bss)= - \frac1{\sqrt{N}} \sum_{i \neq j} J_{ij} \sigma_i \sigma_j 
	= - \frac1{\sqrt{N}} \langle \bss, J \bss\rangle
\eeq
where the spin variables $\bss=(\sigma_1, \cdots, \sigma_N)\in \R^N$ lie on the sphere $\|\bss\|^2=\sum_{i=1}^N \sigma_i^2=N$, 
and $\langle \cdot, \cdot \rangle$ denotes the Euclidean inner product. 
Here $J_{ij}=J_{ji}$, $1\le i<j\le N$, are independent identically distributed random variables, representing the disorder of the system, and $J=(J_{ij})_{i,j=1}^N$ with $J_{ii}=0$. 
We assume that $J_{12}$ has mean $0$ and variance $1$.
The free energy at inverse temperature $\beta$ is defined by
\beq \label{SSK free e}
	F_N= F_N(\beta)= \frac1{N} \log Z_N, 
	\qquad
	Z_N = \int_{S_{N-1}} e^{\beta H_N(\bss)} \dd \omega_N(\bss),
\eeq
where $\dd\omega_N$ is the normalized uniform measure on the sphere $S_{N-1} = \{ \bss \in \R^N : \| \bss \|^2 = N \}$.
Note that since $J_{ij}$ are random, $F_N$ is a random variable. 
The subject of this paper is the fluctuations of $F_N$ as $N\to\infty$. 

The usual Sherrington--Kirkpatrick (SK) model is a mean field version of the Edwards-Anderson model of spin glass, 
and is given by the same Hamiltonian as~\eqref{SSK Hamiltonian} but with the condition that the spin variables are on a lattice instead of the sphere: $\bss \in \{-1, 1\}^N$. 
(For this case many literatures use a different convention that the Hamiltonian is divided by $2$.) 
The partition function is defined as $Z_N= \sum_{\bss \in \{-1, 1\}^N} e^{\beta H_N(\bss)}$. 
Among the numerous existing results on the SK model (see, for example, \cite{TalagrandBook2011v1, TalagrandBook2011v2, PanchenkoBook2013}), 
we here review a few about the free energy that are relevant to this paper. 
The non-random limit of the free energy, $\lim_{N\to \infty} F_N$, for the SK model was first predicted by Parisi \cite{Parisi} in 
the more general setting of $p$-spin interactions in the presence of external magnetic field. 
The Parisi formula was rigorously proved by Talagrand in his famous 2006 paper \cite{TalagrandParisi} in which he proved the convergence of the expectation to the Parisi formula when the disorder random variables $J_{ij}$ are Gaussian. 
The universality of the limit of $F_N$ independent of $J$ is proved under the finite third moment condition, with mean $0$ and variance $1$ by Carmona and Hu \cite{CarmonaHu}, which improved the previous result 
by Guerra and Toninelli \cite{GuerraToninelli2002} for symmetric random variables with finite fourth moment. 
The Parisi formula is implicit and is given in terms of a variational problem. 
For a recent study on this variational problem, see \cite{AuffingerChen}. 
An important feature here is the existence of the critical temperature $\beta_c=\frac12$. 
The cardinality of the support of the measure that underlies in the Parisi formula changes at $\beta=\beta_c$. 
The phase transition is also understood by the fact that the difference between the quenched disorder free energy and the annealed free energy, $\frac1{N} \mathbb{E}[\log Z_N] - \frac1{N} \log \mathbb{E}[Z_N]$, tends to zero 
as $N\to \infty$ in the high temperature regime, $\beta<\beta_c$, but does not tend to zero in the low temperature regime, $\beta>\beta_c$ (see \cite{AizenmanLebowitzRuelle}). 

The fluctuations of the free energy for the SK model in the high temperature regime was studied by Aizenman, Lebowtiz, and Ruelle \cite{AizenmanLebowitzRuelle}. 
They showed that if the disorder random variables $J_{ij}$, $1\le i<j\le N$, are independent Gaussian 
with mean zero and variance $1$, $\mathcal{N}(0,1)$, then 
\beq
	N\left( F_N - \left( \log 2 + \beta^2 \right)\right)  \Rightarrow \mathcal{N}\big(-\frac12 \alpha, \alpha\big),
\eeq
where
\beq\label{eq:SKvari}
	\alpha=-\frac12  \log(1-4\beta^2) -2 \beta^2,
\eeq
and the convergence is in distribution as $N\to\infty$ (see also Section 11.4 in \cite{TalagrandBook2011v2}.)
It was also shown in \cite{AizenmanLebowitzRuelle} that for non-Gaussian disorder, the same limit theorem holds with some changes on the formula of $\alpha$. 
However, a limit theorem for the fluctuations in the low temperature regime still remains as an open question. 
The limit theorem is not known even for the zero temperature case. 

\bigskip

The SSK model was introduced by Kosterlitz, Thouless, and Jones \cite{KosterlitzThoulessJones} as a model that is easier to analyze than the SK model. 
Indeed in their paper, the authors evaluated the limit of the free energy explicitly though a rigorous proof was not supplied. 
The analogue of Parisi formula for the SSK model was obtained by Crisanti and Sommers \cite{CrisantiSommers}. 
The Parisi formula for the SSK model was later proved rigorously by Talagrand \cite{TalagrandParisiSpher} immediately after he proved the formula for the SK model. 
The Parisi formula can be evaluated explicitly for the Hamiltonian~\eqref{SSK Hamiltonian} (for the case with $2$-spin interactions without magnetic field) \cite{PanchekoTalagrand2007} and the resulting formula is same as one obtained in \cite{KosterlitzThoulessJones}: 
\beq \label{eq:Fbetadef}
	F_N \to F(\beta)=
		\begin{cases} 
			\beta^2 & \text{ if } 0< \beta < 1/2, \\
			2\beta - \frac{\log (2\beta) + 3/2}{2} & \text{ if } \beta > 1/2
		\end{cases}
\eeq
in expectation and also in distribution. 
For a general class of random variables $J_{ij}$, a corresponding result for the limiting free energy $F(\beta)$ was obtained in \cite{Guionnet-Maida}. 
The formula in \cite{Guionnet-Maida} was given in terms of R-transform and one can check that it is same as the one in Definition \ref{def:parameters} below.
Note that $F(\beta)$ in~\eqref{eq:Fbetadef} is $C^2$ but not $C^3$ at $\beta_c=\frac12$; the critical temperature is same as that of the SK model. 
The third-order transition also holds for SSK model with general random variables $J_{ij}$ as one can see in Definition \ref{def:parameters}. 

We remark that paper \cite{BenArousDemboGuionnet2001} studied the so-called ``soft'' spherical Sherrington--Kirkpatrick (SSSK) model and evaluated the almost sure limit of the free energy explicitly. 
The limit of the free energy for the SSSK model also shows a third order phase transition. 

In this paper, we obtain the limit theorem for the fluctuations of $F_N$ for the SSK model. 
We first state the result when the disorder random variables are Gaussian. Non-Gaussian case will be stated 
in Theorem~\ref{thm:spin2}. 

\begin{thm}\label{thm:spin1}
Let $J_{ij}$, $1\le i<j\le N$, be independent Gaussian random variables with mean $0$ and variance $1$, 
and set $J_{ji}=J_{ij}$. 
Let
\beq\label{eq:freeenergdef}
	F_N= F_N(\beta)= \frac1{N} \log \left[ \int_{S_{N-1}} e^{- \frac{\beta}{\sqrt{N}} \sum_{i \neq j} J_{ij} \sigma_i \sigma_j } \dd \omega_N(\bss) \right]
\eeq
be the free energy of the SSK model at inverse temperature $\beta$,  
where $\dd\omega_N$ is the normalized uniform measure on the sphere $S_{N-1} = \{ \bss \in \R^N : \| \bss \|^2 = N \}$.
Then the following holds as $N\to \infty$. Here $F(\beta)$ is defined in~\eqref{eq:Fbetadef}, and all the convergences are in distribution. 
\begin{enumerate}[(i)]
\item In the high temperature regime $0<\beta<\frac12$, 
\beq\label{eq:thmspin1high}
	N\left( F_N - F(\beta) \right) \Rightarrow \mathcal{N}\left(f, \alpha\right), 
\eeq
where 
\beq
	f=\frac14  \log(1-4\beta^2) -2 \beta^2, \qquad 
	\alpha=-\frac12  \log(1-4\beta^2) -2 \beta^2.
\eeq
\item In the low temperature regime $\beta>\frac12$, 
\beq\label{eq:thmspin1low}
	\frac1{\beta-\frac12} N^{2/3} \left( F_N - F(\beta) \right) \Rightarrow 
	TW_{1}, 
\eeq
where $TW_{1}$ is the GOE Tracy-Widom distribution. 
\end{enumerate}
\end{thm}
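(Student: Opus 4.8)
\noindent\textbf{Proof strategy for Theorem~\ref{thm:spin1}.}
The plan is to turn $Z_N$ into a one--dimensional contour integral controlled by the eigenvalues of the Wigner matrix $M\deq J/\sqrt N$, and then to extract the fluctuations from a steepest--descent analysis whose nature changes according to whether the relevant saddle lies outside or at the edge of the limiting spectrum. First, since the law of $J$ is symmetric we may replace $-J$ by $J$ and write $Z_N=\int_{S_{N-1}}e^{\beta\langle\bss,M\bss\rangle}\,\dd\omega_N(\bss)$; as $\omega_N$ is $\mathrm{O}(N)$--invariant, diagonalizing $M$ and rotating $\bss$ shows that $Z_N$ depends on $J$ only through the eigenvalues $\mu_1\ge\cdots\ge\mu_N$ of $M$, which in the Gaussian case form a GOE spectrum (up to a vanishing diagonal) and hence obey the semicircle law, eigenvalue rigidity, and $N^{2/3}(\mu_1-2)\Rightarrow TW_1$. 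Evaluating $\int_{\R^N}e^{-\frac z2\|\bsx\|^2+\beta\langle\bsx,M\bsx\rangle}\dd\bsx=(2\pi)^{N/2}\prod_i(z-2\beta\mu_i)^{-1/2}$ (for $z>2\beta\mu_1$) also in polar coordinates and inverting the resulting Laplace transform yields
\[
 Z_N=C_N\int_{\gamma-\ii\infty}^{\gamma+\ii\infty}e^{NG(z)}\,\dd z,\qquad
 G(z)=\frac z2-\frac1{2N}\sum_{i=1}^N\log(z-2\beta\mu_i),
\]
for $\gamma>2\beta\mu_1$, with $C_N$ explicit and $\tfrac1N\log|C_N|\to-\tfrac12$ by Stirling.

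The real critical point $\hat z$ of $G$ solves $\tfrac1N\sum_i(\hat z-2\beta\mu_i)^{-1}=1$; the left side decreases from $+\infty$ to $0$ on $(2\beta\mu_1,\infty)$, so $\hat z$ exists and is unique there, and its limit is characterized by $m_{sc}(\hat z/2\beta)=2\beta$, where $m_{sc}$ is the Stieltjes transform of the semicircle law. This equation has a solution $z_c=4\beta^2+1$ strictly to the right of the edge $4\beta$ exactly when $\beta<\tfrac12$, and for $\beta>\tfrac12$ it has no such solution and $\hat z\to 4\beta$. In the \emph{high--temperature} case the saddle $z_c$ stays bounded away from the fluctuating branch points, so a routine Laplace expansion gives $\log Z_N=\log|C_N|+NG(z_c)-\tfrac12\log N+O(1)+o_\p(1)$; here $\hat z-z_c$ and $G'(z_c)$ are each $O_\p(N^{-1})$ (the latter being $-\tfrac12$ times a centered linear statistic), so the saddle displacement is negligible. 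The $\log N$ cancels the one hidden in $\log|C_N|$, and a semicircle computation gives $-\tfrac12+\lim G(z_c)=\beta^2=F(\beta)$, so that $N(F_N-F(\beta))$ equals, up to $o_\p(1)$, the centered linear statistic $-\tfrac12\bigl(\sum_i\log(z_c-2\beta\mu_i)-N\!\int\log(z_c-2\beta x)\,\dd\rho_{sc}(x)\bigr)$. Applying the CLT for linear eigenvalue statistics of the GOE to the test function $x\mapsto\log(z_c-2\beta x)$, which is analytic in a neighborhood of $[-2,2]$ since $z_c-4\beta=(2\beta-1)^2>0$, yields the Gaussian limit, and substituting this test function into the explicit GOE mean--correction and variance formulas produces the stated $f$ and $\alpha$. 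This proves part (i).

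In the \emph{low--temperature} case one finds $\hat z=2\beta\mu_1+\tfrac cN+o_\p(N^{-1})$ with $c=\tfrac{2\beta}{2\beta-1}$, so the saddle sits a distance $O(N^{-1})$ from the rightmost branch point. Isolating the $i=1$ factor, rescaling $z-2\beta\mu_1$ on the scale $N^{-1}$ so that the local integral becomes an elementary inverse--Laplace transform of $v^{-1/2}$, and treating the remaining smooth factor by Laplace's method, one obtains $F_N=F(\beta)+(\beta-\tfrac12)(\mu_1-2)+O_\p(N^{-1})$: the deterministic part reproduces $F(\beta)=2\beta-\tfrac{\log(2\beta)+3/2}{2}$ (again using $\tfrac1N\log|C_N|\to-\tfrac12$ together with $\int\log(2-x)\,\dd\rho_{sc}(x)=\tfrac12$), while the $O_\p(N^{-1})$ remainder collects the normalization corrections and the fluctuations of the near--edge sum $\sum_{i\ge2}\log(\mu_1-\mu_i)$, controlled by rigidity. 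Since $N^{2/3}(\mu_1-2)\Rightarrow TW_1$ for the GOE, dividing by $\beta-\tfrac12$ and multiplying by $N^{2/3}$ gives \eqref{eq:thmspin1low}, the remainder being of smaller order. This proves part (ii); the non--Gaussian Theorem~\ref{thm:spin2} follows from the same scheme, replacing the GOE CLT by the Wigner CLT (which alters $f$ and $\alpha$) and the GOE edge limit by Tracy--Widom universality for Wigner matrices.

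The main obstacle is the rigorous steepest--descent analysis in the low--temperature regime, where the contour endpoint coincides with the random edge $2\beta\mu_1$ while the eigenvalues $\mu_2,\mu_3,\dots$ pile up within $O(N^{-2/3})$ of it: one must estimate the contour integral uniformly in, or conditionally on, the eigenvalue configuration, show that the $\mu_1$--contribution factorizes cleanly from the bulk, and bound the fluctuations of $\sum_{i\ge2}\log(\mu_1-\mu_i)$, all of which relies on sharp eigenvalue rigidity and on edge universality to deliver the $TW_1$ limit. A secondary bookkeeping point is tracking the deterministic constants through Stirling's formula so as to witness the cancellation of the $\log N$ terms between $C_N$ and the Gaussian prefactor, which is what fixes $f$ and the value of $F(\beta)$.
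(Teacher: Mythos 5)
Your proposal follows essentially the same route as the paper: (1) the Gaussian-integral/polar-coordinates inversion of the Laplace transform that produces the Bromwich contour representation of $Z_N$ in terms of $G(z)$; (2) steepest descent at a deterministic saddle $z_c$ strictly outside the spectrum for $\beta<\tfrac12$, with the fluctuation carried by the centered linear statistic of the analytic test function $\log(z_c-2\beta\,\cdot)$ and the CLT of Bai--Yao/Lytova--Pastur type delivering the Gaussian limit; (3) for $\beta>\tfrac12$ the saddle collapses onto $2\beta\mu_1$ at distance $\Theta(N^{-1})$, the leading contribution to $F_N-F(\beta)$ is $(\beta-\tfrac12)(\mu_1-2)$, and the GOE edge limit finishes. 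The identification $\widehat\gamma=2\beta+\tfrac{1}{2\beta}$ (your $z_c/2\beta$), the value $F(\beta)$, and the final $f,\alpha$ match the paper's Definition~\ref{def:parameters} and the Wigner specialization in Section~\ref{sec:Wigner}.

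The one place where you depart from the paper is the low-temperature steepest descent. You propose to isolate the $(z-2\beta\mu_1)^{-1/2}$ factor, rescale $z-2\beta\mu_1$ by $1/N$, evaluate a local inverse--Laplace integral explicitly, and treat $\prod_{i\ge 2}(\cdot)^{-1/2}$ as a slowly varying multiplier. The paper avoids any attempt to \emph{compute} the saddle prefactor: Lemma~\ref{steepest descent sup} only proves a two-sided polynomial bound $N^{-C}<K<C$ on the full contour integral (via an upper bound from the absolute value, and a lower bound obtained by deforming to the steepest-descent path and exploiting that $\re G$ is monotone along it together with the parabolic shape of $\Gamma^+$ near $\gamma$), because $\tfrac1N\log K\to 0$ is all that is needed. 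Your variant would in principle give a sharper asymptotic for $K$, but it has to confront the facts that the branch points $2\beta\mu_2,2\beta\mu_3,\dots$ sit only $O(N^{-2/3})$ away so the ``smooth factor'' is not uniformly slowly varying on the tails of the local $v$-integral, and that the decay of the integrand at large $|\!\im z|$ comes entirely from the product of square roots, not from the exponential. These are exactly the obstructions the paper's $K$-bound is designed to sidestep, and they are also the reason the paper only proves the two-sided location estimate $\tfrac1{3\beta N}\le\gamma-\lambda_1\le N^{-1+4\epsilon}$ rather than your sharper $\gamma-\lambda_1=\tfrac{c}{N}(1+o_\p(1))$. Your sharper claims are consistent with what rigidity gives and would likely hold, but they are not needed, and the extra precision is where the genuine technical work would lie. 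One smaller bookkeeping slip in part (i): the mean $f$ is not produced by the linear-statistics CLT alone; there is also the deterministic $\ell_1(\beta)=\log(2\beta)-\tfrac12\log f_2(\beta)$ coming from the $-\tfrac1{2N}\log G''(\widehat\gamma)$ term in the Laplace expansion, which you fold implicitly into your ``$O(1)$'' but must be tracked explicitly to recover the stated $f=\tfrac14\log(1-4\beta^2)-2\beta^2$.
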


Hence the order of the fluctuations changes from $N^{-1}$ in the high temperature regime to $N^{-2/3}$ in the low temperature regime. 
In the high temperature regime, the fluctuations are asymptotically Gaussian with the same variance as in SK model 
(see~\eqref{eq:SKvari}). 
On the other hand, in the low temperature regime, the fluctuations are asymptotically same as 
those of the largest eigenvalue of a large random matrix from GOE (Gaussian orthogonal ensemble). 
The connection to the GOE Tracy-Widom distribution is apparent at zero temperature:
From~\eqref{eq:freeenergdef}, we may define the free energy at zero temperature (which is the formal limit 
of $\frac1{\beta}F_N(\beta)$ as $\beta\to \infty$) as 
\beq
	\widetilde{F}_N(\infty)=  \sup_{\|\bss\|^2=N } \frac1{N} \langle \bss, \frac{-J}{\sqrt{N}} \bss \rangle = \lambda_{\max} (N)
\eeq
where $\lambda_{\max}(N)$ is the largest eigenvalue of the random symmetric matrix $M=-J/\sqrt{N}$, 
$J=(J_{ij})_{i,j=1}^N$.
The random matrix $M$ is almost exactly an $N\times N$ GOE matrix except that the diagonal terms are zero. 
The fluctuations of the smallest and the largest eigenvalues are still given by the GOE Tracy-Widom distribution in the limit $N\to \infty$ \cite{So1999}: $N^{2/3}(\tilde{F}_N(\infty) - 2) \Rightarrow TW_{1}$. 
The theorem above shows that the same limit law for the fluctuations hold for all $\beta>\frac12$ after the change by the multiplicative factor $\beta-\frac12$. 
The proof of the theorem will show that for $\beta>\frac12$, the main contribution to $F_N$ comes from the largest eigenvalue of $-J$ and this holds not only for the leading asymptotic term of $F_N$ but also for the second term corresponding to the fluctuations. 
On the other hand, we will see in the proof that for $\beta<\frac12$, all of the eigenvalues of the random matrix $-J/\sqrt{N}$ contribute to $F_N$ in the form of the linear statistic $\sum_{i=1}^N g(\lambda_i)$ where $\lambda_i$ are the eigenvalues of $-J/\sqrt{N}$ for a specific function $g$.
It is a well-known result in random matrix theory that if the function $g$ is smooth in an open interval that contains the support of the limiting density function of the eigenvalues, then the linear statistic converges to the Gaussian distribution 
\cite{Johansson98, BY2005, BWZ2009, LP}.

For the SK model, there is no analytic result for the fluctuations in the low temperature regime. 
See \cite{ParisiRizzo2009} for some physical analysis and conjectures. 
Some numerical studies \cite{Andreanov, Boettcher} suggest that at zero temperature, the order of the fluctuations are smaller that $N^{-2/3}$ (\cite{Boettcher} suggests $N^{-3/4}$) and the limiting distribution is not the GOE Tracy-Widom distribution. 
See \cite{Chatterjee2009} (also \cite{ChatterjeeBook2014}) for a mathematical result on the upper bound of the order of the fluctuations. 

\medskip

It is interesting to consider the near critical case when $\beta$ depends on $N$ and satisfies $\beta\to \frac12$ as $N\to \infty$. 
Even though we do not show in this paper, it is possible to improve the proof in this paper to show that~\eqref{eq:thmspin1low} still holds when $\beta=\beta_N=\frac12 + N^{-\delta}$ with $\delta<\frac13$. 
It is expected that~\eqref{eq:thmspin1high} also holds when $\beta=\beta_N= \frac12-N^{-\delta}$ with $\delta<\frac13$. These will be discussed in a future paper. 
It is tempting to predict the critical window of $\beta$ in which the transition from the Gaussian distribution to the GOE Tracy-Widom distribution occurs by using Theorem~\ref{thm:spin1}. 
The theorem indicates that the leading-order term of the variance of $F_N$ is of order 
$-\frac1{N^2} \log(1-2\beta)$ for $\beta<1/2$ and of order $\frac1{N^{4/3}}(\beta-\frac12)^2$ for $\beta>1/2$, 
as $N\to \infty$ and $\beta\to \frac12$. 
By matching these orders, we are lead to speculate that the critical window of the temperature is $\beta= \frac12 + O(\frac{\sqrt{\log N}}{N^{1/3}})$.

Another model in which the Tracy-Widom distribution appears is the directed polymer in random environment (DPRE) 
in $1+1$ dimension. 
Recent impressive developments in the field show that for some specific choices of disorders, the fluctuations of the free energy 
are given by the GUE Tracy-Widom distribution for all $\beta>0$ \cite{AmirCorwinQuastel11, BCF, BCR, CorwinSeppalShen14, OConnellOrtmann14}. 
(Here GUE stands for Gaussian unitary ensemble.)
It was indeed shown previously in \cite{CaromaHu2002, CometsShigaYoshida2003} that the critical temperature for $1+1$ dimensional DPRE is $\beta_c=0$. 
Recently it was shown by Alberts, Khanin, and Quastel \cite{AlbertsKhaninQuastel14}
that the critical window is $\beta=O(N^{-1/4})$. More specifically if $\beta= BN^{-1/4}$, then 
the fluctuations of the free energy converge to a different distribution parametrized by $B$, called the crossover distribution that appears in the KPZ equation (see also \cite{MorenoSeppalValko14}). 
This regime is called the intermediate disorder regime in \cite{AlbertsKhaninQuastel14}. 

\bigskip

For non-Gaussian disorder random variables, we have the following universality result. 
Note that the disordered random variables are not necessarily identically distributed. 

\begin{thm}\label{thm:spin2}
Let $J_{ij}$, $1\le i\le j\le N$, be independent random variables satisfying the following conditions:
\begin{itemize}
\item All moments of $J_{ij}$ are finite and $\mathbb{E}[J_{ij}]=0$ for all $1\le i\le j\le N$.
\item For all $i<j$, $\mathbb{E}[J_{ij}^2]=1$, $\mathbb{E}[|J_{ij}|^3]=W_3$, and $\mathbb{E}[J_{ij}^4]=W_4$ for some constants $W_3, W_4\ge 0$. 
\item For all $i=1, \cdots, N$, $\mathbb{E}[J_{ii}^2]=w_2$ for a constant $w_2\ge 0$. 
\end{itemize}
Set $J_{ji}=J_{ij}$ for $i<j$. 
Define the free energy as~\eqref{eq:freeenergdef} with the sum replaced by $\sum_{i, j=1}^N J_{ij} \sigma_i \sigma_j$.
Then~\eqref{eq:thmspin1high} still holds after the changes 
\beq
	f=\frac14 \log(1-4\beta^2) + \beta^2(w_2-2)+ 2\beta^4(W_4-3),
	\quad 
	\alpha=-\frac12 \log(1-4\beta^2) +\beta^2(w_2-2)+ 2\beta^4(W_4-3).
\eeq
On the other hand,~\eqref{eq:thmspin1low} holds without any changes. 
\end{thm}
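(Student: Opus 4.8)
\emph{Proof idea.}
The plan is to run the same analysis that establishes Theorem~\ref{thm:spin1}, replacing the places where the Gaussian law of $J$ is used by the corresponding universality statements for Wigner matrices. Write $M=-J/\sqrt N$ and let $\mu_1\ge\mu_2\ge\cdots\ge\mu_N$ be its eigenvalues; the structural input is the exact representation of the partition function as a one-dimensional integral in which the randomness enters only through the $\mu_i$ --- schematically, in the normalization used for Theorem~\ref{thm:spin1},
\beq\label{eq:proprep}
	Z_N = c_N\cdot\frac1{2\pi\ii}\int_\gamma e^{N G_N(z)}\,\dd z,
	\qquad
	G_N(z) = z - \frac1{2N}\sum_{i=1}^N\log(z-\beta\mu_i),
\eeq
with $c_N$ deterministic and $\gamma$ a vertical contour to the right of $\beta\mu_1$. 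Since $M$ is a Wigner matrix, its empirical spectral distribution converges to the semicircle law $\rho_{sc}$ on $[-2,2]$ and $\mu_1\to 2$; the limiting exponent $G(z)=z-\tfrac12\int\log(z-\beta\mu)\,\rho_{sc}(\dd\mu)$ has a critical point $z_c$ with $z_c>2\beta$ precisely when the equation $m(z_c/\beta)=2\beta$ (with $m$ the Stieltjes transform of $\rho_{sc}$) has a solution, i.e.\ when $\beta<\tfrac12$. Performing the steepest-descent analysis of \eqref{eq:proprep} exactly as for Theorem~\ref{thm:spin1}, the fluctuations of $F_N$ reduce, after subtracting deterministic terms, to: \emph{(a)} for $\beta<\tfrac12$, a smooth linear eigenvalue statistic, $N(F_N-F(\beta))=-\tfrac12\,\mathcal L_N(h)+b_N+o(1)$ with $\mathcal L_N(h)=\sum_i h(\mu_i)-N\!\int h\,\dd\rho_{sc}$, $h(\mu)=\log(z_c-\beta\mu)$, and $b_N$ deterministic (the displacement $\hat z_N-z_c=O(N^{-1})$ of the saddle and the fluctuation of $G_N''$ do not contribute); \emph{(b)} for $\beta>\tfrac12$, the saddle collapses onto the edge, the integral is governed by a neighbourhood of $z=\beta\mu_1$, and one obtains $\tfrac1{\beta-1/2}N^{2/3}(F_N-F(\beta))=N^{2/3}(\mu_1-2)+o(1)$, the factor $\beta-\tfrac12$ arising because the $\mu_1$-driven fluctuations of $\beta\mu_1$ and of $\tfrac1N\sum_{i\ge2}\log(\beta\mu_1-\beta\mu_i)$ add up. Beyond these the only random inputs are a priori bounds on the eigenvalue locations, used to discard the part of $\gamma$ away from the relevant point.

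\emph{Low temperature.} The needed a priori bounds --- the local semicircle law, eigenvalue rigidity, and the upper-tail bound on $\mu_1$ --- hold for Wigner matrices with independent (not necessarily identically distributed) entries of mean zero, off-diagonal variance $1$, diagonal variance $w_2$, and uniformly bounded moments, and can replace the GOE-specific computations used for Theorem~\ref{thm:spin1}(ii); in particular rigidity gives $\tfrac1N\sum_{i\ge2}\log(\beta\mu_1-\beta\mu_i)=\Psi(\mu_1)+o(N^{-2/3})$ for an explicit deterministic $\Psi$, so the fluctuation is genuinely carried by $\mu_1$ alone. It then remains to invoke edge universality for Wigner matrices, $N^{2/3}(\mu_1-2)\Rightarrow TW_1$ \cite{So1999}, valid under the stated moment hypotheses; this limit does not depend on $W_3$, $W_4$, or $w_2$ (the diagonal of $M$ has operator norm tending to zero, and, more to the point, the Tracy--Widom law at the edge of a Wigner matrix is insensitive to the diagonal variance). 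Hence \eqref{eq:thmspin1low} holds with no change.

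\emph{High temperature.} Here $h(\mu)=\log(z_c-\beta\mu)$ is analytic in a neighbourhood of $[-2,2]$, so the central limit theorem for linear eigenvalue statistics of Wigner matrices \cite{LP, BY2005} applies: $\mathcal L_N(h)$ converges to a Gaussian whose variance and whose $O(1)$ mean-correction are explicit functionals of $h$. Writing the Chebyshev expansion $h(2\cos\theta)=\tfrac{c_0}{2}+\sum_{k\ge1}c_k\cos k\theta$, these functionals are finite combinations of the $c_k$ in which the $w_2$-dependent contribution enters only through $c_1$ and the $W_4$-dependent contribution only through $c_2$, while the odd moment $W_3$ does not appear --- this is exactly why $f$ and $\alpha$ contain $w_2$ and $W_4$ but not $W_3$. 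Since $z_c$ satisfies $m(z_c/\beta)=2\beta$, one gets $c_k=-2(2\beta)^k/k$ for $k\ge1$; in particular $c_1=-4\beta$, $c_2=-4\beta^2$, and $\sum_{k\ge1}k c_k^2=-4\log(1-4\beta^2)$ via $\sum_{k\ge1}(4\beta^2)^k/k=-\log(1-4\beta^2)$. Substituting these into the CLT formulae (with the precise constants as in \cite{LP, BY2005}) and combining with $b_N$ --- which contains the Stirling expansion of $c_N$, its $\log N$ term cancelling the one produced by the Gaussian prefactor in the steepest-descent formula --- produces the Gaussian limit with the stated parameters $f$ and $\alpha$; as a consistency check, setting $w_2=0$, $W_4=3$ recovers the Gaussian case, Theorem~\ref{thm:spin1}(i).

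\emph{Main obstacle.} The probabilistic ingredients (rigidity, edge universality, the linear-statistics CLT) are classical; the real work is the bookkeeping --- re-deriving, using only the local law and rigidity, every remainder estimate in the steepest-descent analysis of \eqref{eq:proprep} that was previously obtained from the explicit GOE eigenvalue density, in particular the exponential smallness of the tails of $\gamma$ and, in the low-temperature regime, the joint control of $\mu_1$ together with the $O(1)$ eigenvalues lying within $O(N^{-2/3})$ of the spectral edge. A secondary, purely computational, difficulty specific to the high-temperature regime is to track the $O(1)$ deterministic terms precisely enough to pin down the mean $f$, for which one needs the exact form of the $w_2$- and $W_4$-dependent mean correction in the linear-statistics CLT.
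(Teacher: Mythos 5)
Your proposal is correct and takes essentially the same route as the paper: the paper proves Theorem~\ref{thm:spin2} by specializing its general framework (the integral representation of Lemma~\ref{inverse laplace}, a steepest-descent analysis controlled only by eigenvalue rigidity, the linear-statistics CLT for $\beta<\frac12$ and edge universality for $\beta>\frac12$) to Wigner matrices with the given moment structure, and then extracts $f$ and $\alpha$ from the Chebyshev expansion of $\varphi(x)=\log(\widehat\gamma-x)$ in Appendix~\ref{appendix Wigner}, exactly as you sketch. One small imprecision worth noting, though it does not affect the outcome: in the CLT formulas \eqref{Wigner mean}--\eqref{Wigner variance} the mean-correction sees $w_2$ through $\tau_2$ and $W_4$ through $\tau_4$, whereas the variance sees $w_2$ through $\tau_1^2$ and $W_4$ through $\tau_2^2$, so the claim that ``$w_2$ enters only through $c_1$ and $W_4$ only through $c_2$'' is accurate for the variance but not for the mean.
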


\bigskip

The starting point of the proofs of the above theorems is a simple integral formula of the partition function. 

\begin{lem}\label{inverse laplace}
Let $M$ be an $N\times N$ symmetric matrix with eigenvalues $\lambda_1\ge\cdots\ge\lambda_N$. 
Then  
\beq \label{integral representation0}
	\int_{S_{N-1}} e^{\beta \langle \bss, M \bss \rangle }\dd \omega_N(\bss) 
	= C_N  \int_{\gamma - \ii \infty}^{\gamma + \ii \infty} e^{\frac{N}{2} G(z)} \dd z, 
	\quad 
	G(z) = 2\beta z - \frac{1}{N} \sum_i \log (z - \lambda_i),
\eeq
where $\gamma$ is any constant satisfying $\gamma>\lambda_1$, the integration contour is the vertical line from $\gamma-\ii \infty$ to $\gamma+\ii \infty$, the $\log$ function is defined in the principal branch, and 
\beq
	C_N	= \frac{\Gamma(N/2)}{2\pi \ii (N\beta)^{N/2-1}}.
\eeq 
Here $\Gamma(z)$ denotes the Gamma function. 
\end{lem}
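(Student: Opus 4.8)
The plan is to reduce the integral over the sphere to a Gaussian integral in $\R^N$ and then diagonalize. First I would write $\bss = \sqrt{N}\, \bsu$ with $\bsu$ uniform on the unit sphere $S^{N-1}_1=\{\|\bsu\|=1\}$, so that $\int_{S_{N-1}} e^{\beta\langle\bss,M\bss\rangle}\dd\omega_N(\bss) = \int_{S^{N-1}_1} e^{N\beta\langle\bsu,M\bsu\rangle}\dd\sigma(\bsu)$, where $\dd\sigma$ is the normalized surface measure. The standard device is to insert the identity $1 = \int_0^\infty \frac{z^{N/2}}{\Gamma(N/2)} r^{N/2-1} e^{-rz}\dd r$ (for $z>0$) or, more directly, to use the fact that a Gaussian vector in $\R^N$ factors into its radial part and its (uniform) angular part. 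Concretely, for $\gamma>\lambda_1$ the matrix $\gamma I - M$ is positive definite, and
\beq
	\int_{\R^N} e^{-\langle\bsx,(\gamma I-M)\bsx\rangle}\,\dd\bsx = \frac{\pi^{N/2}}{\det(\gamma I-M)^{1/2}} = \pi^{N/2}\prod_i (\gamma-\lambda_i)^{-1/2}.
\eeq
On the other hand, switching to polar coordinates $\bsx = t\bsu$, $t\ge 0$, $\bsu\in S^{N-1}_1$, and using rotational invariance of Lebesgue measure,
\beq
	\int_{\R^N} e^{-\langle\bsx,(\gamma I-M)\bsx\rangle}\,\dd\bsx = \left(\int_{S^{N-1}_1} e^{t^2\langle\bsu,M\bsu\rangle}\cdots\right)
\eeq
— this couples $t$ and $\bsu$, so the cleaner route is the Laplace-transform identity below.

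The key algebraic step is the elementary Gamma-function identity: for any $a>0$,
\beq\label{eq:gammaid}
	a^{-p} = \frac{1}{\Gamma(p)}\int_0^\infty r^{p-1} e^{-ar}\,\dd r, \qquad p = \frac{N}{2},
\eeq
applied with $a = \gamma - 2\beta$ replaced by something quadratic. Better: I would start from $\int_{S^{N-1}_1} e^{N\beta\langle\bsu,M\bsu\rangle}\dd\sigma(\bsu)$ and use the Gaussian representation $e^{N\beta\langle\bsu,M\bsu\rangle}$ is awkward directly, so instead use the formula for moments of the sphere via Gaussian integrals: if $\bsg\sim\caN(0,I_N)$ then $\bsg/\|\bsg\|$ is uniform on $S^{N-1}_1$ and independent of $\|\bsg\|^2\sim\chi^2_N$. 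Hence
\beq
	\E\big[e^{\beta\langle\bsg,M\bsg\rangle}\big] = \E\Big[e^{\beta\|\bsg\|^2\langle\bsu,M\bsu\rangle}\Big],\qquad \bsu=\bsg/\|\bsg\|.
\eeq
The left side is a direct Gaussian integral equal to $\prod_i(1-2\beta\lambda_i)^{-1/2}$ (valid when $2\beta\lambda_i<1$ for all $i$); the right side, after conditioning on $\bsu$ and integrating out the $\chi^2_N$ variable $\|\bsg\|^2$ against \eqref{eq:gammaid} with $a = \frac12 - \beta\langle\bsu,M\bsu\rangle$, produces $\int_{S^{N-1}_1}(1-2\beta\langle\bsu,M\bsu\rangle)^{-N/2}\dd\sigma(\bsu)$ up to the normalization $\Gamma(N/2)/(2^{N/2}\Gamma(N/2))$-type constants. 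Since this must hold for all small $\beta$ and both sides are analytic, one obtains the identity for the partition function with $M$ rescaled; tracking the constant carefully gives $C_N$. Finally, to pass from the real formula $\prod_i(1-2\beta\lambda_i)^{-1/2}$-type expression to the contour integral \eqref{integral representation0}, I would recognize $\prod_i(z-\lambda_i)^{-1/2} = e^{-\frac12\sum_i\log(z-\lambda_i)}$ and invoke the inverse Laplace transform: \eqref{eq:gammaid} itself says $\frac{1}{\Gamma(p)}\int_0^\infty r^{p-1}e^{-ar}\dd r = a^{-p}$, whose inversion along a vertical contour $\re z = \gamma$ (Bromwich integral) expresses $r^{p-1}/\Gamma(p)$ as $\frac{1}{2\pi\ii}\int_{\gamma-\ii\infty}^{\gamma+\ii\infty} e^{rz} z^{-p}\dd z$; applying this with $p=N/2$ and the product over eigenvalues shifting $z\mapsto z-\lambda_i$ converts the radial integral into exactly $C_N\int_{\gamma-\ii\infty}^{\gamma+\ii\infty} e^{\frac N2 G(z)}\dd z$ after the substitution that turns $\sum_i\log(z-\lambda_i)$ and the linear term $2\beta z$ into $\frac N2 G(z)$.

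The main obstacle is bookkeeping rather than conceptual: getting the constant $C_N = \Gamma(N/2)/(2\pi\ii (N\beta)^{N/2-1})$ exactly right, including the factor $(N\beta)^{N/2-1}$ coming from the rescaling $\bss=\sqrt N\bsu$ together with the $\beta$-dependent Jacobian in \eqref{eq:gammaid}, and the $2\pi\ii$ from the Bromwich contour. A secondary point requiring care is the contour: the representation $a^{-N/2} = \frac{1}{2\pi\ii}\int_{\gamma-\ii\infty}^{\gamma+\ii\infty}(\cdots)$ needs $\gamma$ to the right of all singularities of $\prod_i(z-\lambda_i)^{-1/2}$, i.e. $\gamma>\lambda_1$, which is exactly the stated hypothesis, and one must check the integral converges — the integrand decays like $|z|^{-N/2}$ along the vertical line, which suffices for $N\ge 3$ (and the small-$N$ cases can be handled separately or the formula interpreted as an identity of analytic functions in $\beta$). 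I would present the Gaussian-integral computation first to fix the real-variable identity, then do the inverse-Laplace step, and finally reconcile constants by comparing, say, the $M=0$ case or the leading behavior as $\gamma\to\infty$.
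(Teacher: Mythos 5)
Your core idea is the paper's proof: evaluate a Gaussian integral over $\R^N$ two ways — directly, giving $(\pi/(\beta N))^{N/2}\prod_i(z-\lambda_i)^{-1/2}$, and in polar coordinates, giving (after $t=\beta N r^2$) a Laplace transform in $z$ of $t^{N/2-1}\, I(t)$ where $I(t)=\int_{S^{N-1}}e^{t\sum\lambda_ix_i^2}\dd\Omega$ — and then invert along the Bromwich line $\re z=\gamma>\lambda_1$; your $\chi^2_N\otimes\text{uniform}$ decomposition of a standard Gaussian is exactly this radial/angular split in probabilistic dress. Two spots in your sketch want fixing, though. First, the step you abandon as a dead end (polar coordinates for $\int_{\R^N}e^{-\langle\bsx,(\gamma I-M)\bsx\rangle}\dd\bsx$, dismissed because ``this couples $t$ and $\bsu$'') is precisely what the paper carries out; the coupling is not an obstruction, it is the mechanism that makes the outer $r$-integral a Laplace transform of the spherical integral. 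Second, your final line — that the contour integral follows by ``applying [the inversion of $z^{-p}$] with the product over eigenvalues shifting $z\mapsto z-\lambda_i$'' — is not correct as stated: the inverse Laplace of a product $\prod_i(z-\lambda_i)^{-1/2}$ is a convolution of the individual inverses, not a simple product of shifted powers. You do not need that route anyway: the two-ways computation already identifies $\prod_i(z-\lambda_i)^{-1/2}$ as the Laplace transform of $\tfrac{1}{2}\pi^{-N/2}\,t^{N/2-1}I(t)$, and a single Bromwich inversion followed by setting $t=N\beta$ and using $|S^{N-1}|^{-1}=\Gamma(N/2)/(2\pi^{N/2})$ gives the constant $C_N$. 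With those two corrections your plan is sound, and your remarks on the contour condition $\gamma>\lambda_1$ and the $|z|^{-N/2}$ decay are accurate.
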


We may apply the method of steepest-descent to analyze the asymptotic behavior of the above integral as $N\to \infty$. 
The difficulty is that $G(z)$ is random since $M= -J/\sqrt{N}$ is a random matrix. 
Now random matrix theory tells us that the eigenvalues of random symmetric matrix $M$ have strong repulsions between them and as a consequence they are rigid in the sense that the eigenvalues are close to the deterministic locations determined by the quantiles of their limiting empirical distribution (i.e. semi-circle law). 
This rigidity of the eigenvalues allows us still to be able to apply the method of steepest-descent. 
The crucial technical ingredient here is precise estimate on the rigidity of the eigenvalues that was obtained recently by Erdos, Yin, and Yau \cite{EYY}. 
This precise rigidity estimate is one of the central achievements of the recent surge of advancements of our understanding of random matrices. 
After we obtained the above integral representation~\eqref{integral representation0}, analyzed them asymptotically, and obtained the results in this paper, we learned that the same integral representation was already obtained in the paper of Kosterlitz, Thouless, and Jones \cite{KosterlitzThoulessJones} in which they obtained the leading order term of the asymptotics by using the method of steepest-descent but without supplying rigorous estimates. 
Our analysis makes their work rigorous, and goes a step further and obtains the second asymptotic term giving the law of the fluctuations. 
The paper \cite{KosterlitzThoulessJones} also considered the case when the mean of the disorder is not necessarily zero. We plan to study this case in a separate paper by the same method as in this paper. 
A similar formula to the above Lemma also appeared in \cite{Mo} for the analysis of rank 1 real Wishart spiked model.

Since our analysis only relies on the above integral formula and the rigidity of the eigenvalues, the random matrix, $M$,  corresponding to the disorder random variables does not necessarily have to have independent entries (hence corresponding to the real Wigner matrices). 
We indeed obtain similar results for $M$ from orthogonal invariant ensembles or real sample covariance matrices, and for $M$ from complex Hermitian matrices, since the rigidity of the eigenvalues was proved for a wide variety of random matrices. 
In the next section, we state general results assuming some spectral properties of random matrices, and in the subsequent section, we list a few of random matrices, including the one corresponding to the SSK model, for which the results may apply.

\bigskip

The rest of paper is organized as follows. 
In Section~\ref{sec:main}, we introduce general conditions and state general results.
In Section \ref{sec:examples}, we illustrate the examples of random matrix ensembles that satisfy the general conditions. Theorems~\ref{thm:spin1} and~\ref{thm:spin2} follow from one of these examples. 
In Section~\ref{sec:integral}, we prove Lemma \ref{inverse laplace}. 
Sections \ref{sec:sub} and \ref{sec:sup} are the main technical part of this paper in which we analyze the integral representation in Lemma \ref{inverse laplace} asymptotically by using the method of steepest-descent. 
The high temperature regime is analyzed in Section \ref{sec:sub} and the low temperature regime is analyzed in Section \ref{sec:sup}. 
In Section~\ref{sec:third}, we prove Theorem \ref{thm:third}, on the third order phase transition of the free energy. Some technical details in Section \ref{sec:examples} are collected in the Appendix.

\medskip

\begin{rem}[Notational Remark 1]
Throughout the paper we use $C$ or $c$ in order to denote a constant that is independent of $N$. 
Even if the constant is different from one place to another, we may use the same notation $C$ or $c$ as long as it does not depend on $N$ for the convenience of the presentation.
\end{rem}

\begin{rem}[Notational Remark 2]
The notation $\Rightarrow$ denotes the convergence in distribution as $N\to \infty$. 
\end{rem}

\medskip 

We close this section by introducing the following terminology.  

\begin{defn}[High probability event]
We say that an $N$-dependent event $\Omega_N$ holds with high probability if, for any given $D > 0$, there exists $N_0 > 0$ such that
$$
\p (\Omega_N^c) \leq N^{-D}
$$
for any $N > N_0$.
\end{defn}

\subsubsection*{Acknowledgments}
We would like to thank Tuca Auffinger, Zhidong Bai, Paul Bourgade, Joe Conlon, Dmitry Panchenko, and Jian-feng Yao for several useful communications. 
Ji Oon Lee is grateful to the department of mathematics, University of Michigan, Ann Arbor, for their kind hospitality during the academic year 2014--2015.
The work of Jinho Baik was supported in part by NSF grants DMS1361782. The work of Ji Oon Lee was supported in part by Samsung Science and Technology Foundation project number SSTF-BA1402-04.

\section{General results} \label{sec:main}

\subsection{Definitions and conditions}\label{sec:conditions}

\begin{defn}\label{def:partition}
For an $N\times N$ real random symmetric matrix $M=(M_{ij})_{i,j=1}^N$, 
we define the partition function at inverse temperature $\beta>0$ by 
\beq \label{partition function'}
	Z_N =Z_N(\beta)= \int_{S_{N-1}} e^{\beta \langle \bss, M\bss \rangle } 
	\dd \omega_N(\bss),
\qquad \langle \bss, M\bss \rangle = \sum_{i,j=1}^N M_{ij} \sigma_i\sigma_j, 
\eeq
where $\dd\omega_N$ is the normalized uniform measure on the sphere $S_{N-1} = \{ \bss \in \R^N : \| \bss \|^2 = N \}$.
The free energy $F_N$ is defined by
\beq \label{eq:freeenergydef}
	F_N = F_N(\beta)= \frac{1}{N} \log Z_N.
\eeq 
\end{defn}

The free energy~\eqref{SSK free e} for the SSK model corresponds to the case when $M=-J/\sqrt{N}$ where $J$ is a symmetric random matrix whose diagonal entries are zero and the entries below the diagonal are independent and identical random variables of mean $0$ and variance $1$. 
As mentioned in the previous section, we prove the limit theorem for the fluctuations for more general random symmetric matrices. 
Precise conditions on $M$ in terms of its eigenvalues will be stated shortly below and these conditions are shown to be satisfied for Wigner matrices, invariant ensembles, and sample covariance matrices in Section~\ref{sec:examples}.  

We also consider Hermitian matrices. 
\begin{defn}
For an $N\times N$ complex random Hermitian matrix $M$, we define 
\beq \label{partition function' complex}
	Z_N =Z_N(\beta)= \int_{\C S_{N-1}} e^{\beta \langle \bss, M\bss \rangle } 
	\dd \omega_N(\bss),
\qquad \langle \bss, M\bss \rangle = \sum_{i,j=1}^N M_{ij} \overline{\sigma_i}\sigma_j, 
\eeq
where $\C S_{N-1}= \{ \bss \in  \C^N : \| \bss \|^2 = N \} \simeq S_{2N-1}$ and $\omega_N(\bss)$ is the uniform measure on $\C S_{N-1}$. 
The free energy $F_N$ is defined by the same formula~\eqref{eq:freeenergydef}. 
\end{defn}

For a symmetric or Hermitian matrix $M$, let $\lambda_1 \geq \lambda_2 \geq \cdots \geq \lambda_N$ denote the eigenvalues of $M$. 
We now list four conditions for the eigenvalues of random matrix $M$ under which the general theorems are proved. 

Let $\nu_N:= \frac{1}{N} \sum_{j=1}^N \delta_{\lambda_j}$ 
denote the empirical spectral measure of $M$. 
We assume that $\nu_N$ converges weakly to a probability measure $\nu$. 
Our first condition is the regularity of the limiting spectral measure $\nu$ in the following sense.

\begin{cond}[Regularity of measure] \label{cond:regular}
Suppose that the empirical spectral measure $\nu_N$ of $M$ converges weakly to a probability measure $\nu$ that satisfies the following properties:

\begin{itemize}
\item $\nu$ is supported on an interval $[C_-, C_+]$ and is positive on $(C_-, C_+)$.

\item $\nu$ is absolutely continuous and $\frac{\dd \nu}{\dd x}$ exhibits square root decay at the upper edge, i.e.,
\beq\label{eq:sqrtb}
 \frac{\dd \nu}{\dd x}(x) = s_{\nu} \sqrt{C_+-x} \left( 1 + O(C_+ -x) \right) \quad \text{ as } x \nearrow C_+
\eeq
for some $s_{\nu} > 0$.
\end{itemize}
\end{cond}

The second condition concerns the rigidity of the eigenvalues. This is the key assumption. 

\begin{cond}[Rigidity of eigenvalues] \label{cond:rigidity}
For a positive integer $k \in [1, N]$, let $\hat k := \min \{ k, N+1-k \}$. Let $\gamma_k$ be the classical location defined by
\beq\label{eq:classicallocationdef}
\int_{\gamma_k}^{\infty} \dd \nu = \frac{1}{N} \left( k - \frac{1}{2} \right).
\eeq
Assume that for any $\epsilon > 0$
\beq \label{rigidity}
| \lambda_k - \gamma_k | \leq \hat k^{-1/3} N^{-2/3 + \epsilon}
\eeq
holds for all $k$ with high probability.
\end{cond}

We remark that under the assumption~\eqref{eq:sqrtb}, the classical location $\gamma_k$ satisfies the estimate
\beq\label{eq:classicalfromC}
C^{-1} k^{2/3} N^{-2/3} \leq |C_+ - \gamma_k| \leq C k^{2/3} N^{-2/3}
\eeq
for some constant $C > 1$ that is independent of $N$.

The third condition is about the linear statistics of the eigenvalues and it is used in the analysis 
of the high temperature case $\beta < \beta_c$.

\begin{cond}[Linear statistics of the eigenvalues] \label{cond:linear}
Assume that for every function $\varphi:\R\to \R$ that is analytic in an open neighborhood of $[C_-, C_+]$ and has compact support, the random variable
\beq
\caN_{\varphi} := \sum_i \varphi(\lambda_i) - N \int_{C_-}^{C_+} \varphi(\lambda) \dd \nu(\lambda)
\eeq
converges in distribution to a Gaussian random variable. The mean and the variance of this Gaussian random variable are denoted by 
$\mean(\varphi)$ and $V(\varphi)$, respectively, and they depend only on $\varphi$ restricted on the support of $\nu$.
\end{cond}

The fourth condition is the convergence to the Tracy-Widom distribution of the largest eigenvalue. 
This will be used in the analysis of the supercritical case $\beta > \beta_c$. 

\begin{cond}[Tracy-Widom limit of the largest eigenvalue] \label{cond:edge}
Let $s_{\nu}$ be the constant appearing in~\eqref{eq:sqrtb}. 
Assume that the rescaled largest eigenvalue $(s_{\nu} \pi)^{-2/3} N^{2/3} (\lambda_1 - C_+)$ converges in distribution to the (GOE or GUE) Tracy-Widom distribution (depending on whether the matrices are symmetric or Hermitian). 
\end{cond}

We denote by $TW_1$ and $TW_2$ GOE and GUE Tracy-Widom random variables 
and by $F_1$ and $F_2$ their cumulative distribution functions, respectively. 

\begin{rem}
For invariant ensembles, the support of the measure $\nu$ in Condition~\ref{cond:regular} may consist of several disjoint intervals, 
In this case, Condition~\ref{cond:linear} does not hold in general; the variance of the linear statistics is a quasi-periodic function in $N$ and does not converge. 
See \cite{Pa2006}.
On the other hand, Condition~\ref{cond:edge} is known to hold for multi-inverval cases as well. 
In this paper we choose to simplify the situation by assuming that the support consists of a single interval; one can still prove some parts of the theorems in the next subsection without assuming this condition. 
\end{rem}

\subsection{Results for symmetric matrices}\label{sec:results}

We first define the critical inverse temperature. 

\begin{defn}[Critical $\beta$] \label{def:beta_c}
Assume that Condition \ref{cond:regular} holds.
Define 
\beq\label{eq:defbetac}
	\beta_c^{sym} = \beta_c = \frac{1}{2} \int_{C_-}^{C_+} \frac{\dd \nu(x)}{C_+ -x}.
\eeq
\end{defn}

Note that $\beta_c$ is finite due to the square root decay of $\frac{\dd \nu}{\dd x}$ at the upper edge. 
The exact values of $\beta_c$ can be evaluated for some random matrix ensembles (see Section~\ref{sec:examples}) as follows. 
\begin{itemize}
\item $\beta_c = \frac12$ for Wigner matrices (with the choice $C_+=2$): SSK model belongs to this example.
\item $\beta_c = \frac14 Q'(C_+)$ for invariant ensembles with potential $Q$.
\item $\beta_c = \frac1{2 \sqrt{C_+}}$ for sample covariance matrices.
\end{itemize}

The first main result is the following.
\begin{thm}[Third order phase transition] \label{thm:third}
Consider an ensemble of symmetric matrices satisfying Conditions~\ref{cond:regular},~\ref{cond:rigidity},~\ref{cond:linear}, and~\ref{cond:edge}.
There is a function $F:(0,\infty)\to [0, \infty)$ such that the free energy $F_N$ satisfies
\beq
	F_N(\beta) \Rightarrow F(\beta)
\eeq
in distribution as $N\to \infty$, for $\beta\neq \beta_c$.
The function $F(\beta)$ is $C^2$ but its third derivative $\partial^3_{\beta} F(\beta)$ is discontinuous at $\beta=\beta_c$.
This function is defined explicitly in Definition~\ref{def:parameters} below. 
\end{thm}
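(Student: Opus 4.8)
The plan is to extract the leading-order asymptotics of the contour integral in Lemma~\ref{inverse laplace} by the method of steepest descent, which identifies $F(\beta)$, and then to compute $\partial_\beta F$, $\partial_\beta^2 F$, $\partial_\beta^3 F$ at $\beta_c$ directly from the resulting closed form; the leading-order analysis uses essentially only Conditions~\ref{cond:regular} and~\ref{cond:rigidity}. Applying Lemma~\ref{inverse laplace} to $M$ and taking $\tfrac1N\log$,
\beq
	F_N = \frac1N\log|C_N| + \frac1N\log\left| \int_{\gamma-\ii\infty}^{\gamma+\ii\infty} e^{\frac N2 G(z)}\,\dd z \right|, \qquad G(z) = 2\beta z - \frac1N\sum_i \log(z-\lambda_i),
\eeq
and Stirling's formula gives $\tfrac1N\log|C_N| = -\tfrac12\log(2\beta) - \tfrac12 + O(N^{-1}\log N)$, a deterministic $\beta$-dependent constant. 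Introduce the reference functions $G_\nu(z) := 2\beta z - \int\log(z-x)\,\dd\nu(x)$ and $m_\nu(z) := \int\frac{\dd\nu(x)}{z-x}$ for $z > C_+$, so $G_\nu'(z) = 2\beta - m_\nu(z)$; by Condition~\ref{cond:regular}, $m_\nu$ decreases strictly on $(C_+,\infty)$ from $m_\nu(C_+) = 2\beta_c$ to $0$. The rigidity bound~\eqref{rigidity}, together with the edge estimate~\eqref{eq:classicalfromC} on the classical locations, controls $\tfrac1N\sum_i\log(z-\lambda_i) - \int\log(z-x)\,\dd\nu(x)$ and shows that $G$ and $G'$ stay within $O(N^{-1+\epsilon})$ of $G_\nu$ and $G_\nu'$, with high probability, uniformly on the relevant contours — these are exactly the estimates of Sections~\ref{sec:sub} and~\ref{sec:sup}.

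For $\beta<\beta_c$ the equation $m_\nu(z)=2\beta$ has a unique solution $\widehat z=\widehat z(\beta)\in(C_+,\infty)$, a non-degenerate saddle of $G_\nu$; since $G''>0$ on $(\lambda_1,\infty)$ the random phase $G$ likewise has a critical point $z_0=\widehat z+o(1)$ (lying to the right of $\lambda_1\to C_+$), $\re G$ has a strict maximum at $z_0$ along the vertical line through it and decays as $|\im z|$ grows, and Laplace's method yields $\tfrac1N\log|\int e^{\frac N2 G}\dd z|=\tfrac12 G(z_0)+O(N^{-1}\log N)\to\tfrac12 G_\nu(\widehat z)$ in probability. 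Hence $F_N\Rightarrow F(\beta)=\tfrac12 G_\nu(\widehat z(\beta))-\tfrac12\log(2\beta)-\tfrac12$. For $\beta>\beta_c$ there is no saddle, as $G_\nu'=2\beta-m_\nu>2(\beta-\beta_c)>0$ on $[C_+,\infty)$; one then pushes the contour to $\gamma=\lambda_1+N^{-2/3+\epsilon}$, the largest value Lemma~\ref{inverse laplace} permits, whereupon $\re G$ is again maximized on the line at the real point $z=\gamma$, the single term $-\tfrac1N\log(\gamma-\lambda_1)$ costs only $O(N^{-1}\log N)$, and $\lambda_1\to C_+$ (rigidity with $k=1$) forces $G(\gamma)\to G_\nu(C_+)$. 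Thus $F_N\Rightarrow F(\beta)=\tfrac12 G_\nu(C_+)-\tfrac12\log(2\beta)-\tfrac12=\beta C_+-\tfrac12\int\log(C_+-x)\,\dd\nu(x)-\tfrac12\log(2\beta)-\tfrac12$. (One checks this matches~\eqref{eq:Fbetadef} in the Wigner case, where $\nu$ is the semicircle law and $\widehat z(\beta)=2\beta+\tfrac1{2\beta}$.)

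It remains to differentiate. On $(\beta_c,\infty)$, $F(\beta)=\beta C_++(\text{const})-\tfrac12\log(2\beta)-\tfrac12$, so $F'(\beta)=C_+-\tfrac1{2\beta}$, $F''(\beta)=\tfrac1{2\beta^2}$, $F'''(\beta)=-\tfrac1{\beta^3}$. On $(0,\beta_c)$, the envelope identity $\partial_z G_\nu(\widehat z)=0$ leaves only $\partial_\beta G_\nu=2z$, giving $F'(\beta)=\widehat z(\beta)-\tfrac1{2\beta}$, $F''(\beta)=\widehat z'(\beta)+\tfrac1{2\beta^2}$, $F'''(\beta)=\widehat z''(\beta)-\tfrac1{\beta^3}$, where implicit differentiation of $m_\nu(\widehat z)=2\beta$ yields $\widehat z'=2/m_\nu'(\widehat z)$ and $\widehat z''=-4m_\nu''(\widehat z)/(m_\nu'(\widehat z))^3$. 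Since $\widehat z(\beta_c^-)=C_+$, continuity of $F$ and $F'$ at $\beta_c$ is immediate. Now the square-root edge~\eqref{eq:sqrtb} enters: scaling $u=(\widehat z-C_+)v$ and using $\int_0^\infty\frac{\sqrt v}{(1+v)^2}\dd v=\tfrac\pi2$, $\int_0^\infty\frac{\sqrt v}{(1+v)^3}\dd v=\tfrac\pi8$ gives $m_\nu'(\widehat z)=-\int\frac{\dd\nu}{(\widehat z-x)^2}\sim-\tfrac\pi2 s_\nu(\widehat z-C_+)^{-1/2}\to-\infty$ and $m_\nu''(\widehat z)=2\int\frac{\dd\nu}{(\widehat z-x)^3}\sim\tfrac\pi4 s_\nu(\widehat z-C_+)^{-3/2}$ as $\widehat z\searrow C_+$. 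Hence $\widehat z'(\beta)\to0$, so $F''(\beta_c^-)=\tfrac1{2\beta_c^2}=F''(\beta_c^+)$ and $F$ is $C^2$; but $\widehat z''(\beta)\to 8/(\pi^2 s_\nu^2)\neq0$, so $\partial_\beta^3 F$ jumps by $8/(\pi^2 s_\nu^2)$ across $\beta_c$. Collecting the two regimes with the Stirling constant produces the explicit $F(\beta)$ of Definition~\ref{def:parameters}.

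The main obstacle is the first step: making the steepest-descent heuristic rigorous when the phase $G$ is a random function whose singularities $\{\log(z-\lambda_i)\}$ accumulate at the fluctuating spectral edge. The needed uniform comparison of $G$ with $G_\nu$ — near $C_+$ in the subcritical case and on the pushed-down contour $\gamma\approx\lambda_1$ in the supercritical case, where the log-statistic is not of the compactly supported smooth type covered by Condition~\ref{cond:linear} — is precisely where the rigidity estimate~\eqref{rigidity} and the edge exponent~\eqref{eq:classicalfromC} are indispensable; once these are in hand (Sections~\ref{sec:sub} and~\ref{sec:sup}), the Laplace estimates, the envelope computation, and the $(\widehat z-C_+)^{-1/2}$, $(\widehat z-C_+)^{-3/2}$ expansions of $m_\nu'$, $m_\nu''$ near the edge are routine.
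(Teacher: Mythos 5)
Your proof is correct and takes essentially the same route as the paper: steepest descent gives the LLN, and then the $C^2$-but-not-$C^3$ statement is a direct calculus exercise on the piecewise formula for $F(\beta)$. Two small improvements over the paper's presentation are worth noting. First, for the convergence in distribution the paper simply cites Theorems~\ref{thm:sub} and~\ref{thm:sup}, which require Conditions~\ref{cond:linear} and~\ref{cond:edge}; you instead go through Lemmas~\ref{steepest descent sub} and~\ref{steepest descent sup} directly, correctly observing that the law of large numbers really only needs Conditions~\ref{cond:regular} and~\ref{cond:rigidity}. Second, for the differentiation you use the envelope identity $G_\nu'(\widehat\gamma)=0$ to get $F'(\beta)=\widehat\gamma(\beta)-\tfrac1{2\beta}$ immediately, which is cleaner than the paper's reparametrization through $\wt F(\beta)=\beta^{-1}(F(\beta)+\tfrac12(1+\log 2\beta))$ and the auxiliary function $f_0$; the subsequent asymptotics $m_\nu'(\widehat\gamma)\sim -\tfrac{\pi s_\nu}{2}(\widehat\gamma-C_+)^{-1/2}$, $m_\nu''(\widehat\gamma)\sim\tfrac{\pi s_\nu}{4}(\widehat\gamma-C_+)^{-3/2}$, and $\widehat\gamma''(\beta_c^-)=8/(\pi^2 s_\nu^2)$ are all correct, and you even extract the exact size of the jump in $\partial_\beta^3 F$, which the paper leaves implicit. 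One minor inaccuracy in your sketch: in the low-temperature case the actual saddle $\gamma$ satisfies $\gamma-\lambda_1=O(N^{-1+\epsilon})$ (Lemma~\ref{supercritical gamma}), not $\gamma=\lambda_1+N^{-2/3+\epsilon}$ as you write; this does not affect the $O(N^{-1}\log N)$ cost of the nearest logarithmic singularity nor the conclusion, but the contour should be taken through the true critical point as the paper does.
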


The next two results are about the fluctuations of $F_N$. 

\begin{thm}[high temperature case] \label{thm:sub}
Consider an ensemble of symmetric random matrices satisfying Conditions~\ref{cond:regular},~\ref{cond:rigidity}, and~\ref{cond:linear}. 
Then for $\beta<\beta_c$, 
\beq
	N \left( F_N (\beta)- F(\beta) \right)  \Rightarrow \mathcal{N}(\ell, \sigma^2)
\eeq
where the constants $\ell \equiv \ell(\beta)$ and $\sigma^2 \equiv \sigma^2(\beta)$ are defined in Definition~\ref{def:parameters} below.
\end{thm}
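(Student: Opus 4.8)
\medskip
\noindent\emph{Proof strategy for Theorem~\ref{thm:sub}.}
The plan is to evaluate the integral representation of Lemma~\ref{inverse laplace} by the method of steepest descent, treating the random function $G(z)=2\beta z-\frac1N\sum_i\log(z-\lambda_i)$ as a small perturbation of its deterministic limit $G_\nu(z)=2\beta z-\int\log(z-x)\,\dd\nu(x)$, and to read off the fluctuations from the linear statistic created by this perturbation. The first step is to locate the saddle of $G_\nu$: on $(C_+,\infty)$ the function $z\mapsto\int\frac{\dd\nu(x)}{z-x}$ is continuous, strictly decreasing, equals $2\beta_c$ at $z=C_+$ (Definition~\ref{def:beta_c}) and tends to $0$ at $+\infty$, so for $\beta<\beta_c$ the equation $G_\nu'(z)=0$ has a unique root $z_0=z_0(\beta)\in(C_+,\infty)$, with $G_\nu''(z)=\int\frac{\dd\nu(x)}{(z-x)^2}>0$ there. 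A zeroth-order saddle analysis will then give the deterministic limit $F(\beta)=-\tfrac12\big(1+\log(2\beta)\big)+\tfrac12 G_\nu(z_0)$ (which reduces to $\beta^2$ for the semicircle law, cf.~\eqref{eq:Fbetadef}); the real work is the next term.

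Next I would pass from $z_0$ to a critical point of the \emph{random} $G$. Writing $G'(z)-G_\nu'(z)=-\frac1N\caN_{\psi_z}$ with $\psi_z(x)=(z-x)^{-1}$, the rigidity estimate~\eqref{rigidity} together with $\lambda_1\le C_++o(1)$ gives $\sup|G'-G_\nu'|\le CN^{-2/3+\epsilon}$ on a fixed small disk about $z_0$ with high probability, and likewise for $G''$ and $G'''$; since $G_\nu'$ changes sign across $z_0$ with derivative bounded below there, $G$ has a real critical point $\widehat z$ with $|\widehat z-z_0|\le CN^{-2/3+\epsilon}$ and $G''(\widehat z)=G_\nu''(z_0)+o(1)>0$, with high probability. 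I would then take the contour of Lemma~\ref{inverse laplace} to be the vertical line $\re z=\widehat z$ (admissible conditionally on $M$), parametrize $z=\widehat z+\ii t$, and combine the elementary monotonicity $\re G_\nu(\widehat z+\ii t)=2\beta\widehat z-\tfrac12\int\log\big((\widehat z-x)^2+t^2\big)\dd\nu(x)$, strictly decreasing in $|t|$ with $\re G_\nu\le 2\beta\widehat z-\log|t|+C$, with the uniform bound $\sup_{t\in\R}|G(\widehat z+\ii t)-G_\nu(\widehat z+\ii t)|\le CN^{-2/3+\epsilon}$ (again from rigidity, since $x\mapsto\log(\widehat z+\ii t-x)$ is uniformly Lipschitz near $[C_-,C_+]$). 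This yields: on $|t|\ge\delta$ the integrand is $e^{\frac N2 G_\nu(z_0)}e^{-cN}$, negligible; on $N^{-1/2+\epsilon}\le|t|\le\delta$ a Taylor expansion about $\widehat z$ gives relative decay $e^{-cN^{2\epsilon}}$; and on $|t|\le N^{-1/2+\epsilon}$ one has $\frac N2 G(\widehat z+\ii t)=\frac N2 G(\widehat z)-\frac N4 G''(\widehat z)t^2+O(N^{-1/2+3\epsilon})$, so the Gaussian integral, extended to $\R$ at exponentially small cost, produces
\beq
	Z_N=C_N\,e^{\frac N2 G(\widehat z)}\sqrt{\tfrac{4\pi}{N\,G''(\widehat z)}}\,(1+o(1))
\eeq
with high probability.

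Taking logarithms, using Stirling for $C_N$, and cancelling the $\tfrac12\log N$ coming from $C_N$ against the one in $\sqrt{4\pi/(N G''(\widehat z))}$, I would obtain $N\big(F_N-F(\beta)\big)=\tfrac N2\big(G(\widehat z)-G_\nu(z_0)\big)+\ell_0+o(1)$ in probability for an explicit deterministic constant $\ell_0$. Now $G(\widehat z)=G_\nu(\widehat z)-\tfrac1N\caN_{\varphi_{\widehat z}}$ with $\varphi_w(x)=\log(w-x)$; since $G_\nu'(z_0)=0$ and $|\widehat z-z_0|\le CN^{-2/3+\epsilon}$, one has $G_\nu(\widehat z)=G_\nu(z_0)+O(N^{-4/3+2\epsilon})$, and, estimating $\varphi_{\widehat z}-\varphi_{z_0}=\int_{z_0}^{\widehat z}\psi_u\,\dd u$ with rigidity, $\caN_{\varphi_{\widehat z}}=\caN_{\varphi_{z_0}}+O(|\widehat z-z_0|\,N^{1/3+\epsilon})=\caN_{\varphi_{z_0}}+o(1)$ with high probability. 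Hence $N\big(F_N-F(\beta)\big)=-\tfrac12\caN_{\varphi_{z_0}}+\ell_0+o(1)$ in probability. Finally, although $\varphi_{z_0}(x)=\log(z_0-x)$ is not compactly supported, multiplying it by a smooth cutoff equal to $1$ on a neighborhood of $[C_-,C_+]$ changes $\caN_{\varphi_{z_0}}$ by $0$ with high probability (by rigidity all $\lambda_i$ lie where the cutoff is $1$) and leaves the limiting mean and variance unchanged (Condition~\ref{cond:linear} depends only on the restriction to the support of $\nu$); so $\caN_{\varphi_{z_0}}\Rightarrow\mathcal{N}\big(\mean(\varphi_{z_0}),V(\varphi_{z_0})\big)$, and Slutsky's theorem gives $N(F_N-F(\beta))\Rightarrow\mathcal{N}(\ell,\sigma^2)$ with $\ell=\ell_0-\tfrac12\mean(\varphi_{z_0})$ and $\sigma^2=\tfrac14 V(\varphi_{z_0})$, which are the constants of Definition~\ref{def:parameters}.

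The main obstacle is the second step: carrying out the steepest descent with the random $G$. The two delicate points are (i) that the displacement of the saddle is negligible at the relevant scale, $(\widehat z-z_0)^2=o(N^{-1})$, which the crude rigidity bound $|\widehat z-z_0|=O(N^{-2/3+\epsilon})$ just barely provides, and (ii) the \emph{uniform} control $\tfrac N2|G(z)-G_\nu(z)|=O(N^{1/3+\epsilon})$ along the entire contour, which must be overwhelmed by the deterministic exponential gain $e^{-cN}$ away from the saddle; it is precisely here that Condition~\ref{cond:rigidity} is used essentially. Everything else — the saddle-point bookkeeping, Stirling, and the passage from an "in probability" expansion to a distributional limit via Condition~\ref{cond:linear} and Slutsky — is routine.
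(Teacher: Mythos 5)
Your proposal is correct and follows essentially the same route as the paper: deform the contour to pass through the random critical point of $G$, carry out a steepest-descent expansion using the rigidity of the eigenvalues, recognize the subleading term as a linear eigenvalue statistic $\caN_{\varphi}$ with $\varphi(x)=\log(\widehat\gamma-x)$, and conclude via Condition~\ref{cond:linear} and Slutsky. (Your $z_0$ is the paper's $\widehat\gamma$, your $\widehat z$ is the paper's $\gamma$.) The one place where you diverge is quantitative rather than structural: you use the crude consequence $\sum_k|\lambda_k-\gamma_k|\le CN^{1/3+\epsilon}$ of Condition~\ref{cond:rigidity}, giving $|\widehat z - z_0|=O(N^{-2/3+\epsilon})$, whereas the paper exploits the $\hat k^{-1/3}$ factor to get $\sum_k|\lambda_k-\gamma_k|\le CN^\epsilon$ and hence $|\gamma-\widehat\gamma|=O(N^{-1+2\epsilon})$ (Corollary~\ref{subcritical gamma}). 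Your weaker bound forces the extra comparison $\caN_{\varphi_{\widehat z}}=\caN_{\varphi_{z_0}}+o(1)$ and makes $(\widehat z-z_0)^2=o(N^{-1})$ hold only "just barely" (i.e.\ for $\epsilon<1/6$), which you correctly flag; with the sharper rigidity bound the saddle shift is negligible at scale $N^{-2}$, so the paper can simply replace $G(\gamma)$ by $G(\widehat\gamma)$ and $G''(\gamma)$ by $G''(\widehat\gamma)$ (Corollary~\ref{cor:G gamma}) and never needs to compare linear statistics at two nearby reference points. Both routes are sound, but the sharper use of rigidity makes the bookkeeping cleaner and gives explicit error rates rather than just $o(1)$.
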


\begin{thm}[low temperature case] \label{thm:sup}
Consider an ensemble of symmetric matrices satisfying Conditions~\ref{cond:regular},~\ref{cond:rigidity}, 
and~\ref{cond:edge}. 
Then for $\beta>\beta_c$, 
\beq \label{sup relation}
	\frac1{(s_{\nu} \pi)^{2/3} \left( \beta - \beta_c \right)} N^{2/3} \left( F_N(\beta) - F(\beta) \right) \Rightarrow TW_1. \eeq
Here the constant $s_{\nu}$ is the one in Condition~\ref{cond:regular}.
\end{thm}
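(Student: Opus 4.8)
The plan is to apply the method of steepest descent to the integral representation in Lemma~\ref{inverse laplace}, with $M$ our symmetric random matrix, and to track how the contour must be deformed as $\beta$ crosses $\beta_c$. Write $G(z) = 2\beta z - \frac1N \sum_i \log(z-\lambda_i)$, which is defined and real-analytic on $(\lambda_1, \infty)$, strictly convex there, with $G'(z) = 2\beta - \frac1N\sum_i \frac1{z-\lambda_i} \to -\infty$ as $z \searrow \lambda_1$ and $\to 2\beta > 0$ as $z\to\infty$. Hence $G$ has a unique critical point $z_c = z_c(\beta) > \lambda_1$ determined by $\frac1N\sum_i \frac1{z_c - \lambda_i} = 2\beta$. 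The key structural fact, which I would establish first, is a dichotomy: by the rigidity estimate (Condition~\ref{cond:rigidity}) and the square-root edge behavior~\eqref{eq:sqrtb}, the deterministic function $z\mapsto \frac12\int \frac{\dd\nu(x)}{z-x}$ is finite at $z = C_+$ with value $\beta_c$, so when $\beta > \beta_c$ the critical point $z_c$ sits at a macroscopic distance to the right of $C_+$ \emph{and} to the right of $\lambda_1$ (using $\lambda_1 = C_+ + o(1)$), whereas when $\beta < \beta_c$ the ``critical point'' of the limiting problem would want to be at $C_+$ itself and the edge eigenvalues dominate. In the regime $\beta > \beta_c$ of the present theorem we are in the first, easy, case: $z_c$ is a genuine interior saddle bounded away from the spectrum.

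Given this, I would carry out the steepest descent along the vertical line $\{z_c + \ii t : t\in\R\}$, which is automatically a steepest-descent contour for the convex function $G$ on the real axis (the real part of $G$ along this line has a strict maximum at $t=0$). A standard Laplace/Watson estimate gives
\beq
	\int_{\gamma-\ii\infty}^{\gamma+\ii\infty} e^{\frac N2 G(z)}\dd z = e^{\frac N2 G(z_c)}\sqrt{\frac{2\pi}{\tfrac N2 G''(z_c)}}\,(1 + o(1)),
\eeq
with high probability, where the error control uses the rigidity bound to ensure $G''(z_c) = \frac1N\sum_i \frac1{(z_c-\lambda_i)^2}$ is bounded above and below and to control the tail of the integral. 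Combining with the prefactor $C_N$ and Stirling's formula for $\Gamma(N/2)$, one obtains
\beq
	F_N(\beta) = \frac12 G(z_c) + \frac1{2}\log(2\beta) \cdot(\text{const}) + O\!\left(\frac{\log N}{N}\right)
\eeq
after collecting the $N$-independent normalizations; the leading term is $F(\beta) = \frac12 G_\infty(z_c^\infty)$ where $G_\infty(z) = 2\beta z - \int\log(z-x)\dd\nu(x)$, and one checks this matches Definition~\ref{def:parameters} and that $F$ is $C^2$ with a jump in the third derivative at $\beta_c$ (the input to Theorem~\ref{thm:third}).

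The heart of the fluctuation statement is then a \emph{linearization of $G(z_c)$ in the eigenvalues near the edge}. The point is that $z_c - \lambda_1 = (z_c - C_+) - (\lambda_1 - C_+)$, and since $z_c - C_+$ is macroscopic while $\lambda_1 - C_+ = O(N^{-2/3+\epsilon})$ is small, one expands $G$ about the deterministic configuration. Writing $G(z_c) = 2\beta z_c - \frac1N\sum_i \log(z_c-\lambda_i)$ and carefully separating the contribution of $\lambda_1$ (and, more generally, of the first few edge eigenvalues) from the bulk, the bulk contribution $\frac1N\sum_{i\ge 2}\log(z_c-\lambda_i)$ concentrates to its deterministic value with fluctuations of order $o(N^{-2/3})$ — here I would use that $z\mapsto \log(z_0 - z)$ is analytic near the bulk and invoke rigidity to bound $\frac1N\sum_i[\log(z_c-\lambda_i) - \log(z_c-\gamma_i)]$, noting the rigidity scale $\hat k^{-1/3}N^{-2/3+\epsilon}$ summed against the smooth weight gives a contribution negligible compared to $N^{-2/3}$. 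What survives at order $N^{-2/3}$ is the single term $-\frac1N\log(z_c - \lambda_1)$, expanded as $-\frac1N\log(z_c-C_+) + \frac1N \frac{\lambda_1 - C_+}{z_c - C_+} + O(N^{-1}(\lambda_1-C_+)^2)$. Crucially, $z_c - C_+$ is asymptotically deterministic: since $\frac1N\sum_i\frac1{z_c-\lambda_i} = 2\beta$ and the sum concentrates, $z_c - C_+ \to d(\beta)$ where $d(\beta)$ is the solution of $\frac12\int\frac{\dd\nu(x)}{d+C_+-x} = \beta$, and one shows $d(\beta) = \frac{1}{2(\beta-\beta_c)}(1 + o(1))$ as $\beta\to\beta_c$ using~\eqref{eq:sqrtb} — this is exactly where the factor $(\beta-\beta_c)^{-1}$ in~\eqref{sup relation} enters. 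Therefore
\beq
	F_N(\beta) - F(\beta) = \frac{1}{2N}\cdot\frac{\lambda_1 - C_+}{z_c - C_+}(1+o(1)) + o(N^{-2/3}),
\eeq
so that $N^{2/3}(F_N - F)$ equals $\frac1{2d(\beta)} N^{2/3}(\lambda_1 - C_+)$ up to $o(1)$. By Condition~\ref{cond:edge}, $(s_\nu\pi)^{-2/3}N^{2/3}(\lambda_1 - C_+) \Rightarrow TW_1$, and since $\frac1{2d(\beta)} = (\beta - \beta_c)(1+o(1))$ one must in fact identify $\frac1{2d(\beta)}$ exactly with $\beta - \beta_c$ — this follows from differentiating the defining relation for $z_c$ and using $\beta_c = \frac12\int\frac{\dd\nu}{C_+-x}$, giving $\frac{\dd}{\dd\beta}(z_c - C_+)^{-1}\big|$ in closed form — yielding precisely $\frac{1}{(s_\nu\pi)^{2/3}(\beta-\beta_c)}N^{2/3}(F_N - F)\Rightarrow TW_1$.

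The main obstacle is the error analysis in the Laplace method on the \emph{random} contour combined with the separation-of-scales argument: one must show that replacing the eigenvalues by their classical locations $\gamma_i$ everywhere except in the single edge term produces an error genuinely of smaller order than $N^{-2/3}$, uniformly on the high-probability event of Condition~\ref{cond:rigidity}, and that the sub-leading edge eigenvalues $\lambda_2, \lambda_3,\dots$ (each also at distance $O(N^{-2/3+\epsilon})$ from $C_+$) do not contribute at order $N^{-2/3}$ — the number of them that matters is $O(1)$ and each contributes $\frac1N \cdot O(N^{-2/3})$, which is fine, but the bookkeeping of the crossover between ``edge'' indices and ``bulk'' indices (where the rigidity scale improves to $\hat k^{-1/3}N^{-2/3+\epsilon}$) needs care. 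A secondary technical point is justifying that the tail of the vertical contour integral $\int_{|t| > \delta}$ is exponentially suppressed despite $G$ only being $C^\infty$ (not entire) — here the explicit formula $G''(z_c + \ii t) $ has controllable real part because $\re\log(z_c + \ii t - \lambda_i) = \frac12\log((z_c-\lambda_i)^2 + t^2)$ is increasing in $|t|$, so $\re G$ is decreasing away from $t=0$ and the standard argument applies.
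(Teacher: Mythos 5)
Your proposal rests on a structural claim that is backwards, and this undoes the whole argument. You assert that for $\beta>\beta_c$ the saddle point $z_c$ sits at a macroscopic distance to the right of $C_+$ (and of $\lambda_1$), while for $\beta<\beta_c$ the saddle is pushed to $C_+$ and the edge eigenvalues dominate. In fact the opposite holds. The deterministic map $h(z)=\tfrac12\int\frac{\dd\nu(x)}{z-x}$ decreases from $\beta_c$ at $z=C_+$ to $0$ at $z=\infty$, so the equation $h(\widehat\gamma)=\beta$ has a solution in $(C_+,\infty)$ \emph{only when} $\beta<\beta_c$. For $\beta>\beta_c$ the limiting problem has no saddle at all in $(C_+,\infty)$; the random $G$ still has a critical point $\gamma$ because $G'(z)\to-\infty$ as $z\searrow\lambda_1$, but this forces $\gamma$ to be trapped within a microscopic neighborhood of $\lambda_1$. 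The paper makes this precise (Lemma~\ref{supercritical gamma}): $\frac{1}{3\beta N}\le\gamma-\lambda_1\le N^{-1+4\epsilon}$ with high probability. Your ``defining relation'' for $d(\beta)$ therefore has no solution for $\beta>\beta_c$, and $\frac{1}{2d(\beta)}=\beta-\beta_c$ cannot be derived from it.

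Several downstream steps then collapse. Because $\gamma-\lambda_1=O(1/N)$, one has $G''(\gamma)=\frac1N\sum_i(\gamma-\lambda_i)^{-2}\gtrsim\frac1N(\gamma-\lambda_1)^{-2}\gtrsim N$ (and similarly all higher derivatives blow up at rate $\sim N^{\ell-1}$, Lemma~\ref{G gamma bound}), so the Gaussian steepest-descent formula $\int e^{\frac N2 G}=e^{\frac N2 G(z_c)}\sqrt{2\pi/(\tfrac N2 G''(z_c))}(1+o(1))$ that you invoke is not valid; the paper has to prove the much weaker estimate $\int_{\gamma-\ii\infty}^{\gamma+\ii\infty}e^{\frac N2 G}\dd z=\ii e^{\frac N2 G(\gamma)}K$ with only $N^{-C}\le K\le C$ (Lemma~\ref{steepest descent sup}), which still suffices since $\frac1N\log K=O(\log N/N)$. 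Likewise, your Taylor expansion of $-\frac1N\log(z_c-\lambda_1)$ about $-\frac1N\log(z_c-C_+)$ requires $z_c-C_+$ to be bounded away from zero, which fails; in truth this term is $-\frac1N\log(O(1/N))=O(\log N/N)$ and is \emph{negligible} at scale $N^{-2/3}$. The $\lambda_1$-dependence actually enters elsewhere: from $\gamma\approx\lambda_1$, the term $2\beta\gamma$ contributes $2\beta\lambda_1$, while the bulk sum $\frac1N\sum_{i}\log(\gamma-\lambda_i)$ linearized about $C_+$ contributes $-2\beta_c(\gamma-C_+)\approx-2\beta_c(\lambda_1-C_+)$, and combining these gives $F_N-F(\beta)=(\beta-\beta_c)(\lambda_1-C_+)+O(N^{-1+4\epsilon})$. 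The end formula you reach coincidentally resembles the correct one, but the mechanism producing the coefficient $(\beta-\beta_c)$ is entirely different, and the intermediate claims you would need to prove are false.
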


\begin{rem} \label{rem:fraction}
For a given random symmetric matrix $M$, $\frac1{Z_N} e^{\beta H(\bss)}$ defines a probability measure on the sphere $S_{N-1}$. 
It is interesting to study how far a random point $\bss$ on $S_{N-1}$ under this probability measure is from the eigenspace for the largest eigenvalue of the matrix. 
One such a measurement is the random variable 
\beq \label{condensate fraction00}
\mathbb{E}_M \left[ |\langle\bss,\bsv_1\rangle|^2 \right] = \frac{1}{N} \int_{S_{N-1}} |\langle \bss, \bsv_1 \rangle|^2 \frac{e^{\beta H(\bss)}}{Z_N} \dd \omega_N (\bss)
\eeq
where $\bsv_1$ is an $\ell_2$-normalized eigenvector associated with $\lambda_1$. 
Note that due to the absolute value in $|\langle\bss,\bsv_1\rangle|$,~\eqref{condensate fraction00} 
does not depend on the choice of the eigenvector $\bsv_1$. 
It is easy to check that $\mathbb{E}_M \left[ |\langle\bss,\bsv_1\rangle|^2 \right] = -\frac{1}{\beta N Z_N} \frac{\partial Z_N}{\partial \lambda_1}$ and, from this formula, 
it is straightforward to prove that  $\mathbb{E}_M \left[ |\langle\bss,\bsv_1\rangle|^2 \right]=O(N^{-1})$ for $\beta < \beta_c$ by modifying the analysis in Section~\ref{sec:sub} for the proof of Theorem~\ref{thm:sub}. 
This is consistent with the fact that for the high temperature case, all eigenvalues contribute to the free energy and the fluctuations of $F_N$ come from the fluctuations of certain linear statistics of all of the eigenvalues. 
On the other hand, for the super-critical case when $\beta>\beta_c$, we expect that $\mathbb{E}_M \left[ |\langle\bss,\bsv_1\rangle|^2 \right]$ converges to a constant that depends on $\beta$.
\end{rem}

\bigskip

The constants appearing in the above theorems 
are given as follows. 
Note that $h(s):=\frac12 \int_{C_-}^{C_+} \frac{\dd \nu(x)}{s - x}$ is a decreasing function for real $s>C_+$ and $h(s)\to 0$ as $s\to +\infty$. 
Moreover, by the definition~\ref{def:beta_c} of $\beta_c$, $h(s)\to \beta_c$ as $s\searrow C_+$. 
Hence for $\beta<\beta_c$, there is a unique $\widehat\gamma\equiv \widehat\gamma(\beta) \in (C_+, \infty)$ that satisfies
\beq \label{stieltjes gamma}
	\frac12 \int_{C_-}^{C_+} \frac{\dd \nu(x)}{\widehat\gamma - x} = \beta.
\eeq
Note that $\widehat\gamma(\beta)$ is an decreasing function in $\beta\in (0, \beta_c)$ and $\widehat\gamma(\beta) \searrow C_+$ as $\beta\nearrow \beta_c$. 

\begin{defn}\label{def:parameters}
Define 
\beq \label{L sub010}
F(\beta) = \beta \widehat\gamma(\beta) - \frac{1}{2} \left( \int_{C_-}^{C_+} \log (\widehat\gamma(\beta) - k) \dd \nu(k) + 1 + \log (2\beta) \right), 
\qquad \beta<\beta_c,
\eeq
and 
\beq \label{eq:L sup}
F(\beta) = \beta C_+ - \frac{1}{2} \left( \int_{C_-}^{C_+} \log (C_+ - k) \dd \nu(k) + 1 + \log (2\beta) \right), 
\qquad \beta\ge \beta_c.
\eeq
Furthermore, for $\beta<\beta_c$, define
\beq 
	\ell(\beta) = \ell_1(\beta) -\frac1{2}\mean(\varphi), \qquad \sigma^2(\beta) = \frac{1}{4} V(\varphi), 
\eeq 
where 
\beq \label{eq: ell one def}
	\ell_1(\beta) = \log (2\beta) - \frac{1}{2} \log \left( \int_{C_-}^{C_+} \frac{\dd \nu(k)}{(\widehat\gamma(\beta)-k)^2} \right) 
\eeq
and
$\mean(\varphi)$ and $V(\varphi)$ are defined in Condition~\ref{cond:linear} with 
\beq
	\varphi(x)=\log(\widehat{\gamma}(\beta)-x).
\eeq
\end{defn}

\subsection{Results for Hermitian matrices} \label{sec:hermitian}

All of the previous results hold for Hermitian matrices after the following simple changes:
\begin{enumerate}
\item The critical value in Definition~\ref{def:beta_c} is changed to $\beta_c^{H} = 2 \beta_c^{sym}$. 
\item Theorems~\ref{thm:third} and~\ref{thm:sub} 
hold without any changes.
\item Theorem~\ref{thm:sup} holds with $TW_1$ replaced by $TW_2$ in~\eqref{sup relation}. 
\item In Definition~\ref{def:parameters}, all terms remain the same after the change that $\beta$ is replaced by $\beta/2$. 
For example, $F^H(\beta)=F(\beta/2)$.
\end{enumerate}

\section{Examples} \label{sec:examples}

We list some random matrix ensembles, which satisfy the conditions in Subsection \ref{sec:conditions}, and hence to which the general results in Subsections \ref{sec:results} and \ref{sec:hermitian} apply.

\subsection{Wigner matrix} \label{sec:Wigner}

A real Wigner matrix is an $N \times N$ real symmetric matrix $M$ whose upper triangle entries $M_{ij}$ $(i \leq j)$ are independent real random variables satisfying the following conditions:
\begin{itemize}
\item The entries are centered, i.e., $\E [ M_{ij}] = 0$ for all $i,j$.
\item Their variances satisfy that $\E [ |M_{ij}|^2] = \frac1{N}$ for $i \neq j$ 
and $\E [|M_{ii}|^2] = \frac{w_2}{N}$ for a constant $w_2 \geq 0$. 
\item For any integer $p > 2$, $\E [ |M_{ij}|^p ] = O(N^{-p/2})$. Moreover, $\E [|M_{ij}|^3]$ and $\E [|M_{ij}|^4]$ do not depend on $i, j$.
\end{itemize}

A complex Wigner matrix is an $N \times N$ complex Hermtian matrix $M$ whose real and imaginary parts of the entries are all independent, modulo the Hermitian condition, and satisfy the same moments conditions as above and an extra condition that $\E [(M_{ij})^2] = 0$ for $i \neq j$.

\begin{rem}
We note that some of the Conditions in Subsection \ref{sec:conditions} are still satisfied even if some of the conditions on the definition of Wigner matrices, such as the existence of all moments, are relaxed. However, we content with the above definition of Wigner matrices so that all of the four Conditions in the previous section are simultaneously satisfied.
Similar remark also applies to the random matrix ensembles in the next two subsections. 
\end{rem}

For real and complex Wigner matrices, the following are known:

\begin{enumerate}
\item Condition~\ref{cond:regular} (Regularity)

For both real and complex case, the limiting spectral measure is given by the semicircle law, 
\beq
\frac{\dd \nu}{\dd x} (x) = \frac{1}{2\pi} \sqrt{4-x^2},\qquad -2\le x\le 2. 
\eeq 
Hence Condition~\ref{cond:regular} is satisfied with $C_+=2$ and $s_\nu=\frac1{\pi}$. 

\item Condition~\ref{cond:rigidity} (Rigidity)

Condition~\ref{cond:rigidity} was proved in \cite{EYY} for both real and complex cases. 

\item Condition~\ref{cond:linear} (Linear statistics)

Condition~\ref{cond:linear} was proved in \cite{BY2005}. See also \cite{BWZ2009} and \cite{LP} for non-analytic test functions. 
The mean $\mean(\varphi)$ and the variance $V(\varphi)$ for function $\varphi$ are as follows. Let
\beq
	w_2 :=N\E[|M_{11}|^2], \qquad W_4 := N^2 \E [|M_{12}|^4].
\eeq
We have $w_2=2, W_4=3$ for GOE, and $w_2=1, W_4=2$ for Gaussian unitary ensemble (GUE). 
Set
\beq \label{Chebyshev formula}
	\tau_\ell(\varphi)= \frac1{\pi} \int_{-2}^2 \varphi(x) \frac{T_\ell(x/2)}{\sqrt{4-x^2}} \dd x
	= \frac1{2\pi} \int_{-\pi}^\pi \varphi(2\cos\theta) \cos(\ell\theta) \dd \theta
\eeq
for $\ell=0,1,2,\cdots$, 
where $T_\ell(t)$ are the Chebyshev polynomials of the first kind; 
$T_0(t)=1$, $T_1(t)=t$, $T_2(t)=2t^2-1$, $T_3(t)=4t^3-3t$, $T_4(t)=8t^4-8t^2+1$, etc.

The mean and the variance for the real case are 
\beq \label{Wigner mean}
	\mean(\varphi) = \mean_{GOE}(\varphi)  + \tau_2(\varphi)  (w_2 -2) + \tau_4(\varphi) (W_4 -3)
\eeq
and 
\beq \label{Wigner variance}
	V(\varphi) = V_{GOE}(\varphi)  + \tau_1(\varphi)^2 (w_2-2) + 2 \tau_2(\varphi)^2 (W_4-3), 
\eeq
respectively, where
\beq \label{GOE mean}
	\mean_{GOE}(\varphi) = \frac14\left[ \varphi(2)+\varphi(-2) \right] - \frac12 \tau_0(\varphi)
\eeq
and
\beq \label{GOE variance}
\begin{split}
	V_{GOE}(\varphi) &= 2\sum_{\ell=1}^\infty \ell \tau_\ell(\varphi)^2 \\
	&= \frac{1}{2\pi^2} \int_{-2}^2 \int_{-2}^2 \left( \frac{\varphi(\lambda_1) - \varphi(\lambda_2)}{\lambda_1 - \lambda_2} \right)^2 \frac{4-\lambda_1 \lambda_2}{\sqrt{4-\lambda_1^2} \sqrt{4-\lambda_2^2}} \dd \lambda_1 \dd \lambda_2.
\end{split}
\eeq

For the complex case, 
\beq \label{Wigner mean complex}
	\mean^H(\varphi) = \mean_{GUE}(\varphi)  + \tau_2(\varphi) (w_2 -1)  + \tau_4(\varphi) (W_4 -2)
\eeq
and 
\beq \label{Wigner variance complex}
	V^H(\varphi)= V_{GUE}(\varphi)  + \tau_1(\varphi)^2  (w_2-1)+ 2\tau_2(\varphi)^2 (W_4-2) , 
\eeq
respectively, where
\beq \label{GUE variance}
	\mean_{GUE}(\varphi) = 0, \qquad V_{GUE}(\varphi) = V_{GOE}(\varphi)/2.
\eeq

\item Condition~\ref{cond:edge} (Tracy-Widom limit)

The Tracy-Widom distribution limit of the largest eigenvalue was proved in \cite{So1999, TV2010, EYY} for both real and complex cases. 

\end{enumerate}

\bigskip

We can evaluate the various constants appearing in the theorems in Subsection~\ref{sec:results} explicitly
and obtain following for real Wigner matrices. 
\begin{enumerate}[(i)]
\item $\beta_c=\frac12$.
\item The limit of the free energy is
\beq
	F(\beta)=
	\begin{cases}
	\beta^2 & \text{ if } 0<\beta < 1/2 \\
	2\beta - \frac{\log (2\beta) + 3/2}{2} & \text{ if } \beta > 1/2.
	\end{cases}
\eeq
\item For $\beta<\frac12$, $N \left(F_N(\beta) - F(\beta)\right) \Rightarrow \mathcal{N}(\ell, \sigma^2)$ where
\beq \label{eq:ell Wigner}
	\ell = \frac{1}{4}  \left( \log (1-4\beta^2)+ 4\beta^2 (w_2-2) + 8\beta^4 (W_4-3) \right)
\eeq
and
\beq \label{eq:sigma Wigner}
	\sigma^2 =  \frac{1}{2} \left(  -\log (1-4\beta^2)+ 2\beta^2 (w_2 -2) + 4 \beta^4 (W_4 -3)  \right) .
\eeq

\item For $\beta>\frac12$, 
$(\beta-\frac12)^{-1}N^{2/3} \left( F_N(\beta)-F(\beta)\right) \Rightarrow TW_1$.
\end{enumerate}
See Appendix \ref{appendix Wigner} for the detail.
This proves Theorem~\ref{thm:spin1} and Theorem~\ref{thm:spin2}. 

\bigskip

For complex Wigner matrices, we have the following changes:
(i) $\beta_c^H = 1$ and (ii) $L^H(\beta)=L(\beta/2)$. For (iii), 
\beq
	\ell = \frac{1}{2} \left( \log (1-\beta^2)+ \beta^2 (w_2-1) +  \frac{\beta^4}2  (W_4-2) \right) 
\eeq
and 
\beq
	\sigma^2 = -\log (1-\beta^2)+ \beta^2 (w_2 -1) + \frac{\beta^4}2 (W_4 -2).
\eeq
for $\beta<1$. For (iv), $\beta$ is replaced by $\beta/2$ and $TW_1$ by $TW_2$.

\subsection{Invariant ensemble} \label{sec:invariant}

The orthogonal invariant ensemble associated with potential $Q:\R\to\R$ is defined by the density
\beq
	\caP (M) = \frac{1}{Z} \exp (- \frac{N}2 \Tr Q(M)) 
\eeq
on the space of $N\times N$ real symmetric matrices and $Z$ is the normalization constant.
Similarly, the unitary invariant ensemble associated with potential $Q:\R\to\R$ is defined by the density
\beq
	\caP^H (M) = \frac{1}{Z^H} \exp (- N \Tr Q(M))
\eeq
on the space of complex Hermitian matrices, where $Z^H$ is the normalization constant. 
The GOE and the GUE correspond to the choice $Q(x) = x^2 /2$.
We assume that $Q$ is a polynomial of even degree with positive leading coefficient.
Many of the results below hold true for more general $Q$ but we restrict to this class of $Q$ for the convenience of the presentation. 
Furthermore, we assume that the associated equilibrium measure $\nu$ is of form 
\beq\label{eq: nu assump}
	\dd \nu(x) = \lone_{[C_-, C_+]}(x) h(x) \sqrt{(C_+-x)(x-C_-)}\dd x
\eeq
for constants $C_+>C_-$ and for a function $h(x)$ that is real analytic and positive in an open set containing the interval $[C_-, C_+]$. 
Recall that for general $Q$, the support of $\nu$ may consist of several intervals. 
Here we make the single interval assumption in order to use the central limit theorem for linear statistics. 
On the other hand, the square root behavior at the end points of the support holds for generic $Q$ 
\cite{Kuijlaars-McLaughlin00}. 
We remark that~\eqref{eq: nu assump} is guaranteed if $Q$ is convex. 

The following are known: 

\begin{enumerate}
\item Condition~\ref{cond:regular} (Regularity)

This holds from the assumption~\eqref{eq: nu assump} since the limiting spectral measure is given by the equilibrium measure. 
We remark that, from the variational condition on $\nu$, the following relation holds: 
\beq \label{eq:half Q pr}
	\frac 12 Q'(C_+) = \int_{C_-}^{C_+} \frac{\dd \nu(x)}{C_+ - x}.
\eeq

\item Condition~\ref{cond:rigidity} (Rigidity)

Condition~\ref{cond:rigidity} is proved in \cite{BEY} (see also \cite{BEY2012, BEY2014}).

\item Condition~\ref{cond:linear} (Linear statistics)

The linear statistics of eigenvalues was proved in \cite{Johansson98}.
The mean $\mean(\varphi)$ and the variance $V(\varphi)$ for function $\varphi$ are as follows.
For orthogonal invariant ensemble, we have 
\beq
	\mean_{OE}(\varphi)= \frac14 \left( \varphi(C_-)+\varphi(C_+) \right) - \int_{C_-}^{C_+} \frac{\varphi(\lambda) U_Q(\lambda)}{\sqrt{(\lambda-C_-)(C_+-\lambda)}} d\lambda
\eeq
for some rational function $U_Q$ which depends on $Q$.
The formula of $U_Q$ is complicated and is given in (3.54) of \cite{Johansson98}. 
It is, in particular, given by $U_Q(x)= \frac1{2\pi}$ when $Q(x)= \frac12 x^2$. 
On the other hand, 
\beq
	V_{OE}(\varphi) = V_{GOE}(\Phi), \qquad \Phi(x)= \varphi\left( \frac{C_-+C_+}2 + \frac{C_+-C_-}4 x \right) ,
\eeq
where $V_{GOE}$ is the variance~\eqref{GOE variance} for the GOE case. 
The change from $\varphi$ to $\Phi$ comes from the simple translation of the interval $(C_-, C_+)$ to $(-2,2)$. 
Observe that the variance $V_{OE}(\varphi)$ does not depend on the structure of the equilibrium measure except for the end points $C_-$ and $C_+$.On the contrary, the mean $\mean_{OE}(\varphi)$ depends on the full structure of the equilibrium measure. 

For unitary invariant ensemble, we have 
\beq
	\mean_{UE}(\varphi)= 0, \qquad V_{UE}(\varphi)= V_{OE}(\varphi)/2. 
\eeq

We remark that Condition~\ref{cond:linear} is not always valid if $\supp \nu$ consists of multiple disjoint intervals. See \cite{Pa2006} for more detail.

\item Condition~\ref{cond:edge} (Tracy-Widom limit)

Condition~\ref{cond:edge} was proved in \cite{Pastur-Shcherbina97, Bleher-Its99, Deift-Kriecherbauer-McLaughlin-Venakides-Zhou99, Deift-Gioev07a, BEY}.

\end{enumerate}

\bigskip
 
We can easily check from~\eqref{eq:half Q pr} that the critical value is 
\beq
	\beta_c^{OE} = \frac{1}{2} \int_{C_-}^{C_+} \frac{\dd \nu(x)}{C_+ -x} = \frac{1}{4} Q'(C_+), 
\qquad \beta_c^{UE}= \frac12 Q'(C_+)
\eeq
for orthogonal ensembles and unitary ensembles, respectively. 
Other constants appearing in the main theorems of this paper in the previous section may be evaluated for a given potential once the equilibrium measure is obtained.

\subsection{Sample covariance matrix} \label{sec:sample}

Let $X$ be a $K \times N$ matrix whose entries are independent real random variables satisfying the following conditions:
\begin{itemize}
\item The entries are centered, i.e., $\E [X_{ij}] = 0$. % for all $i, j$.
\item Their variances satisfy that $\E [ |X_{ij}|^2] = \frac{1}{N}$. 
\item For any integer $p > 2$, $\E [|X_{ij}|^p] = O(N^{-p/2})$. Moreover, $\E [ |X_{ij}|^3]$ and $\E [|X_{ij}|^4]$ do not depend on $i, j$.
\end{itemize} 
A sample covariance matrix $M$ is a random matrix of the form $M = X^* X$. 
When $X$ is a complex matrix, then we assume, in addition, that $\E [(X_{ij})^2] = 0$.
We assume further that $K \equiv K(N)$ with
\begin{align}
\frac{K}{N} \to d \in [1, \infty)
\end{align}
as $N \to \infty$.

The following are known.

\begin{enumerate}
\item Condition~\ref{cond:regular} (Regularity)

The limiting spectral measure is the Marchenko-Pastur distribution given by
\beq\label{eq:MP law}
	\dd \nu(x) = \frac{1}{2\pi} \frac{\sqrt{(C_+ -x)(x- C_-)}}{x} \dd x, 
\eeq
with support $[C_-, C_+]=[(\sqrt{d}-1)^2, (\sqrt{d}+1)^2]$.
Hence $s_{\nu} = \frac{d^{1/4}}{\pi (\sqrt{d}+1)^2}$.

\item Condition~\ref{cond:rigidity} (Rigidity)

Condition \ref{cond:rigidity} was proved in \cite{PY}. 

\item Condition~\ref{cond:linear} (Linear statistics)

Condition \ref{cond:linear} was proved in \cite{BS2004}. 
For non-analytic test functions, see \cite{BWZ2010} and \cite{LP}. 
The mean $\mean(\varphi)$ and the variance $V(\varphi)$ for function $\varphi$ are as follows: 
see (1.3)--(1.5) in \cite{BWZ2010}, (5.13) of \cite{BS2004}, and (4.28) of \cite{LP}.
Let $W_4 = N^2 \E[ |X_{11}|^4]$.
Set 
\beq \label{eq:sc linear stat chang}
	\Phi(x)= \varphi\left( \frac{C_-+C_+}2 + \frac{C_+-C_-}4 x \right)
	= \varphi(d+1+x\sqrt{d}). 
\eeq
For real sample covariance matrix,
\beq \label{sample variance}
	\mean(\varphi)=\mean_{GOE}(\Phi) - (W_4-3) \tau_2(\Phi), 
	\qquad 
	V(\varphi) = V_{GOE}(\Phi) + (W_4 -3)\tau_1(\Phi)^2. 
\eeq
For complex sample covariance matrix, 
\beq
	\mean^{comp}(\varphi)= - (W_4-2) \tau_2(\Phi), \qquad V^{comp}(\varphi) =V_{GUE}(\Phi) +  (W_4-2) \tau_1(\Phi)^2. 
\eeq

\item Condition~\ref{cond:edge} (Tracy-Widom limit)

Condition~\ref{cond:edge} was proved in \cite{Soshnikov2002, Pe2009, Wang2012, PY}. 

\end{enumerate}

Various constants can be evaluated explicitly and we obtain the following. 
See Appendix \ref{appendix sample} for the detail.
\begin{enumerate}[(i)]
\item $\beta_c=\frac1{2 \sqrt{C_+}} = \frac{1}{2(\sqrt d +1)}$.
\item The limit of the free energy per particle is
\beq \label{L sample covariance}
	F(\beta)=
	\begin{cases}
	- \frac{d}{2\beta} \log (1-2\beta), & \beta < \frac{1}{2(\sqrt d +1)}, \\
	(1+\sqrt d)^2 - \frac{1}{2\beta} \left( (1+\sqrt d) + d \log \frac{\sqrt d}{1+\sqrt d} - \log \frac{1}{1+\sqrt d} + \log (2\beta) \right), &\beta >\frac{1}{2(\sqrt d +1)}.
	\end{cases}
\eeq

\item For $\beta<\beta_c$, %\frac1{2 (\sqrt{d}+1)}$, 
$N(F_N(\beta) - F(\beta)) \Rightarrow \mathcal{N}(\ell, \sigma^2)$ where 
where $L$ is given in \eqref{L sample covariance} and
\beq
	\ell= \frac1{4\beta} \left( \log(1-4B^2) - 4B^2(W_4-3) \right), \quad \sigma^2= \frac1{2\beta^2} \left( -\log(1-4B^2)+B^2(W_4-3) \right) 
\eeq
where we set 
\beq
	B= \frac{\beta\sqrt{d}}{1-2\beta}. 
\eeq
Note that $B<1/2$ for $\beta<\beta_c$ and $B>1/2$ for $\beta>\beta_c$.

\item For $\beta>\beta_c$,
$\frac{(1+\sqrt d)^{4/3}}{d^{1/6}} (\beta-\beta_c)^{-1} N^{2/3} \left( F_N(\beta)-F(\beta)  \right) \Rightarrow TW_1$.
\end{enumerate}

\bigskip

For complex sample covariance matrices, $\beta_c^H= \frac1{\sqrt{d}+1}$, $L^H(\beta)=L(\beta/2)$, and 
\beq
	\ell= \frac1{2\beta} \left( \log(1-4B^2) - 4B^2(W_4-2) \right), 
\quad 
	\sigma^2= \frac1{\beta^2} \left( -\log(1-4B^2) + 4B^2(W_4-2) \right) .
\eeq

\section{Integral representation of the partition Function} \label{sec:integral}

Our starting point in the analysis is the integral representation of the free energy given in Lemma \ref{inverse laplace}. We prove it here. 
As mentioned in Introduction, this formula was also obtained in \cite{KosterlitzThoulessJones}, and 
a similar formula appeared in \cite{Mo}.

\begin{proof}[Proof of Lemma \ref{inverse laplace}]
Let $S^{N-1}=\{ \bsx\in \R^N : \|\bsx\|=1\}$, the unit sphere in $\R^N$, and let $\dd \Omega$ be the surface area  measure on $S^{N-1}$. 
Hence $\frac{\dd \Omega}{|S^{N-1}|}$ is the uniform measure on $S^{N-1}$. 
We denote the left-hand side of~\eqref{integral representation0} by $Z_N$ as in~\eqref{partition function'}.
By change of variables, 
\beq \label{partition function}
	Z_N = \frac{1}{|S^{N-1}|} \int_{S^{N-1}} e^{\beta N \langle \bsx, M\bsx \rangle} \dd \Omega.
\eeq
We diagonalize $M$ and let $M = O^T D O$
for an orthogonal matrix $O$ and a diagonal matrix $D = \diag (\lambda_1, \lambda_2, \cdots, \lambda_N)$. 
Since $\langle \bsx , M\bsx\rangle = \langle O\bsx, D O\bsx\rangle$ and $O$ is orthogonal, we find 
after the changes of variables $\bsx\mapsto O^{-1} \bsx$ that
\beq
	Z_N
	= \frac{1}{|S^{N-1}|} \int_{S^{N-1}} e^{\beta N \langle \bsx, D \bsx \rangle} \dd \Omega 
	= \frac{1}{|S^{N-1}|} \int_{S^{N-1}} e^{\beta N \sum \lambda_i x_i^2} \dd \Omega. 
\eeq
In order to evaluate the integral, we consider 
\beq\label{eq:Jdef}
	J(z) := \int_{\R^N} e^{\beta N \sum \lambda_i y_i^2} e^{-\beta N z \sum y_i^2} \dd \bsy, \qquad z>\lambda_1.
\eeq
We evaluate $J(z)$ the above integral in two different ways. First we evaluate it directly using Gaussian integral and second, we use polar coordinates. By evaluating the Gaussian integrals, we obtain 
\beq\label{eq:JGauss}
	J(z) =  \left(\frac{\pi}{\beta N} \right)^{N/2}  \prod_i \frac{1}{\sqrt{z - \lambda_i}},	\qquad z>\lambda_1.
\eeq
On the other hand, by using polar coordinates, we substitute $\bsy = r \bsx$, $r > 0$, with $\| \bsx \| = 1$ in~\eqref{eq:Jdef}, and then set $\beta N r^2 =t$ to find that 
\beq \label{eq:JLaplace}
	J(z) = \frac1{2(\beta N)^{N/2}}  \int_0^{\infty} e^{-zt} t^{(N/2)-1} I(t) \dd t, 
	\qquad
	I(t):= \int_{S^{N-1}} e^{t \sum \lambda_i x_i^2} \dd \Omega .
\eeq
Note that $J(z)$ is, up to a constant factor, the Laplace transform of $t^{(N/2)-1} I(t)$.
Taking inverse Laplace transform and using~\eqref{eq:JGauss}, we obtain 
\beq \begin{split}
	 \frac{t^{N/2-1}I(t)}{2(\beta N)^{N/2}}  
	 &= \frac{1}{2\pi \ii} \int_{\gamma - \ii \infty}^{\gamma + \ii \infty}  e^{zt} J(z) \dd z 
	 = \left(\frac{\pi}{\beta N} \right)^{N/2}  \frac{1}{2\pi \ii} \int_{\gamma - \ii \infty}^{\gamma + \ii \infty} e^{zt} \prod_i \frac{1}{\sqrt{z - \lambda_i}} \dd z 
\end{split} \eeq
where $\gamma$ is an arbitrary real number satisfying $\gamma>\lambda_1$ since $J(z)$ is defined for $z>\lambda_1$. 
Since $Z_N = \frac{1}{|S^{N-1}|} I \left( \beta N \right)$, we obtain the desired lemma by setting $t=\beta N$ and recalling that $\frac{1}{|S^{N-1}|} = \frac{\Gamma(N/2)}{2\pi^{N/2}}$. 
\end{proof}

From Lemma \ref{inverse laplace}, the partition function~\eqref{partition function'} satisfies 
\beq \label{integral representation}
	Z_N = C_N \int_{\gamma - \ii \infty}^{\gamma + \ii \infty} e^{\frac{N}{2} G(z)} \dd z, 
	\qquad 
	C_N = \frac{\Gamma(N/2)}{2\pi \ii (N\beta)^{N/2-1}},
\eeq
where $\gamma>\lambda_1$, and 
\beq \label{eq:Gdefn}
	G(z) = 2\beta z - \frac{1}{N} \sum_i \log (z - \lambda_i).
\eeq
We use the method of steepest-descent to this integral. 
The following lemma shows that there is a critical value of $G(z)$ on the part of the real line $z\in (\lambda_1, \infty)$ 
and we choose $\gamma$ as this critical point. 

\begin{lem}\label{defofgamma}
There exists a unique $\gamma\in (\lambda_1, \infty)$ satisfying the equation $G'(\gamma) = 0$.
\end{lem}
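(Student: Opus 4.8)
The idea is to exploit convexity of $G$ on the interval $(\lambda_1,\infty)$. First I would compute
\beq
	G'(z) = 2\beta - \frac1N \sum_i \frac{1}{z-\lambda_i},
	\qquad
	G''(z) = \frac1N \sum_i \frac{1}{(z-\lambda_i)^2},
\eeq
both of which make sense for real $z > \lambda_1$. Since $G''(z) > 0$ for all $z \in (\lambda_1,\infty)$, the function $G'$ is strictly increasing on this interval, so the equation $G'(\gamma)=0$ has at most one solution there. This takes care of uniqueness.

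\textbf{Existence.} For existence I would examine the two endpoint limits of $G'$. As $z \searrow \lambda_1$, the term $\frac1N \cdot \frac{1}{z-\lambda_1}$ (times the multiplicity of $\lambda_1$, which is at least one) diverges to $+\infty$, and all other terms stay bounded, so $G'(z) \to -\infty$. As $z \to +\infty$, each term $\frac{1}{z-\lambda_i} \to 0$, so $G'(z) \to 2\beta > 0$. Since $G'$ is continuous on $(\lambda_1,\infty)$ and changes sign, the intermediate value theorem gives a point $\gamma \in (\lambda_1,\infty)$ with $G'(\gamma)=0$. Combined with the monotonicity above, $\gamma$ is unique.

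\textbf{Main obstacle.} There is essentially no obstacle here — this is a one-variable calculus argument. The only mild point to be careful about is that $\lambda_1$ may be a repeated eigenvalue, but this only strengthens the blow-up of $G'$ near $\lambda_1$, so it causes no difficulty; and one should note that $G'$ is real-valued and smooth on the open real interval $(\lambda_1,\infty)$ even though $G$ in \eqref{integral representation0} is a priori defined on a vertical contour, since there $z-\lambda_i > 0$ for all $i$ so the principal-branch logarithm is simply the real logarithm and differentiation is legitimate. I would conclude by remarking that this $\gamma$ is the natural choice of contour abscissa for the steepest-descent analysis, since the vertical line through a critical point of $G$ restricted to the real axis is (by the Cauchy–Riemann equations) a path of steepest descent for $\re G$.
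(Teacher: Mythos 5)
Your proof is correct and takes essentially the same approach as the paper: compute $G'$ and $G''$, observe $G''>0$ on $(\lambda_1,\infty)$ so $G'$ is strictly increasing, and then combine the endpoint limits $G'(z)\to-\infty$ as $z\searrow\lambda_1$ and $G'(z)\to 2\beta>0$ as $z\to\infty$ with the intermediate value theorem. Your extra remarks about repeated eigenvalues and the choice of logarithm branch are fine but not needed.
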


\begin{proof}
This is immediately obtained by noting that  
\beq \label{eq:G'defn}
G'(z) = 2\beta - \frac{1}{N} \sum_i \frac{1}{z - \lambda_i}
\eeq
is an increasing function of $z \in \mathbb{R}$ on the interval $(\lambda_1, \infty)$ with 
$$
\lim_{z \searrow \lambda_1} G'(z) = -\infty, \qquad \lim_{z \to \infty} G'(z) = 2\beta > 0,
$$
\end{proof}

We also remark that
\beq
G''(z) = \frac{1}{N} \sum_i \frac{1}{(z - \lambda_i)^2} > 0, 
\qquad \text{for $z > \lambda_1$.}
\eeq
Hence $z=\gamma$ is a saddle point of the real part of the function $G(z)$, and $\re (G(z))$ decays fastest along the vertical line $z=\gamma+\ii y$ as $|y|$ increases for small $y$. 

\begin{rem}
The analogue of Lemma \ref{inverse laplace} for Hermitian matrices is the following. 
For a Hermitian matrix $M$ with eigenvalues $\lambda_1\ge\cdots \ge\lambda_N$, we have
\beq \label{Hermitian integral representation}
	\int_{\C S_{N-1}} e^{\beta \langle \bss, M \bss \rangle }\dd \omega_N(\bss) 
	= C_N^H  \int_{\gamma - \ii \infty}^{\gamma + \ii \infty} e^{\frac{N}{2} G_H(z)} \dd z, 
	\quad 
	G_H(z) = \beta z - \frac{1}{N} \sum_i \log (z - \lambda_i),
\eeq
where 
\beq
	C_N^H	= \frac{\Gamma(N)}{2\pi \ii (N\beta)^{N-1}}.
\eeq 
This can be obtained similarly. By diagonalizing $M$ and changing variables, the partition function is 
$$
Z_N = \frac{1}{|S^{2N-1}|} \int_{S^{2N-1}} e^{\beta N \sum \lambda_i (x_{2i-1}^2 + x_{2i}^2)} \dd \Omega.
$$
This is evaluated by considering
$$
J_H(z) := \int_{\R^{2N}} e^{\beta N \sum \lambda_i (y_{2i-1}^2 + y_{2i}^2)} e^{-\beta N z \sum y_i^2} \dd \bsy.
$$
\end{rem}

\section{High temperature case} \label{sec:sub}

In this section, we consider the case $\beta < \beta_c$ and prove Theorem \ref{thm:sub} 
for symmetric ensembles; the proof for Hermitian ensembles can be done in a similar manner by using \eqref{Hermitian integral representation} and we skip its proof.

We use the method of steepest-descent in order to evaluate the integral~\eqref{integral representation} asymptotically. 
Since $G(z)$ is random (since $\lambda_i$ are random), the critical point $\gamma$ from Lemma~\ref{defofgamma} is a random variable. 
We approximate $\gamma$ by a non-random number $\widehat{\gamma}$, which is the critical point 
in the interval $(C_+, \infty)$ of the function
\beq\label{stieltjes gamma3}
	\widehat G(z)= 2\beta z - \int_{C_-}^{C_+} \log (z-k) d \nu(k).
\eeq
The function $\widehat G(z)$ is the version of $G(z)$ in which the random spectral measure is replaced by the limiting non-random spectral measure. 
From the discussions around~\eqref{stieltjes gamma}, if $\beta\in (0, \beta_c)$, there is unique $\widehat{\gamma}$ satisfying 
\beq\label{stieltjes gamma2}
	\widehat G'(\widehat\gamma)= 2\beta - \int_{C_-}^{C_+} \frac{\dd \nu(k)}{\widehat\gamma - k} = 0, \qquad \widehat\gamma \in (C_+, \infty).
\eeq

We start with the following lemma, which holds for all $\beta>0$.

\begin{lem} \label{subcritical gamma0}
Assume Condition~\ref{cond:regular} and Condition~\ref{cond:rigidity}. 
Fix $\delta>0$. 
Then the following hold.
\begin{enumerate}[(i)]
\item For every $\epsilon > 0$,
\beq\label{eq:gammahatgamma0}
	G'(z)-\widehat{G}'(z)= O(N^{-1+\epsilon}) 
\eeq
uniformly in $z\ge C_++\delta$
with high probability.
\item For each $\ell=0,1,2,\cdots$, the derivative $G^{(\ell)}(z)=O(1)$ uniformly in 
$z\in \C\setminus B_\delta$ with high probability where 
$B_\delta=\{x+\ii y: C_--\delta<x<C_++\delta,\,  -\delta<y<\delta\}$.
\end{enumerate}
\end{lem}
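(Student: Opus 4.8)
The plan is to prove both statements by directly comparing $G$ and $\widehat G$ term by term, using the rigidity estimate of Condition~\ref{cond:rigidity} together with the classical-location estimate~\eqref{eq:classicalfromC}. For part (i), I would write
\[
G'(z) - \widehat G'(z) = \int_{C_-}^{C_+} \frac{\dd\nu(k)}{z-k} - \frac1N\sum_{i=1}^N \frac{1}{z-\lambda_i}
= \frac1N\sum_{i=1}^N\left(\frac{1}{z-\gamma_i} - \frac1{z-\lambda_i}\right) + \left(\int \frac{\dd\nu(k)}{z-k} - \frac1N\sum_i \frac1{z-\gamma_i}\right),
\]
where $\gamma_i$ are the classical locations. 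The second bracket is a Riemann-sum error: since $1/(z-k)$ is smooth on $[C_-,C_+]$ for $z \ge C_+ + \delta$ and the $\gamma_i$ are the $\nu$-quantiles, one gets $O(N^{-1})$ by a standard quadrature estimate (or more carefully, using $\frac1N = \int_{\gamma_{i+1}}^{\gamma_i}\dd\nu$). The first bracket is $\frac1N\sum_i \frac{\lambda_i-\gamma_i}{(z-\gamma_i)(z-\lambda_i)}$; on the high-probability rigidity event, $|\lambda_i - \gamma_i| \le \hat i^{-1/3} N^{-2/3+\epsilon}$, and since $z \ge C_+ + \delta$ keeps both denominators bounded below by $\delta^2$ (away from zero uniformly in $i$, using that $\lambda_i \le C_+ + o(1)$ — which itself follows from rigidity applied to $\lambda_1$), each summand is $O(N^{-2/3+\epsilon}\hat i^{-1/3})$; summing over $i$ gives $\frac1N \cdot O(N^{2/3}\cdot N^{-2/3+\epsilon}) = O(N^{-1+\epsilon})$. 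Uniformity in $z$ is automatic from the uniform lower bound $|z-k|\ge \delta$.

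For part (ii), the same splitting works but now with $z \in \C \setminus B_\delta$. I would argue that for any such $z$, $\dist(z, [C_-,C_+]) \ge c\,\delta$ for some absolute constant $c$ (an elementary geometric fact about the rectangle $B_\delta$ versus the segment), so all denominators $|z-\lambda_i|$ are bounded below by $c\delta$ on the rigidity event (again using $\lambda_N \ge C_- - o(1)$ and $\lambda_1 \le C_+ + o(1)$ from Condition~\ref{cond:rigidity}). Then $G^{(\ell)}(z) = -\frac{1}{N}\sum_i \frac{(-1)^{\ell-1}(\ell-1)!}{(z-\lambda_i)^\ell}$ for $\ell \ge 1$ is bounded by $(\ell-1)!\,(c\delta)^{-\ell} = O(1)$ uniformly, and for $\ell = 0$ one bounds $|\log(z-\lambda_i)|$ using $c\delta \le |z-\lambda_i| \le |z| + C$ plus a uniform bound on $|z|$ — wait, $z$ ranges over an unbounded set, so for $\ell=0$ I should instead note $G(z)$ is not bounded (it grows like $2\beta z$); presumably the intended reading is $\ell \ge 1$, or one restricts to the relevant bounded region, so I would state part (ii) for $\ell \ge 1$ (or note $G(z) - 2\beta z = O(\log|z|)$ if $\ell=0$ is genuinely needed) and proceed. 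The key point throughout is that $M$'s eigenvalues all lie in a fixed neighborhood of $[C_-,C_+]$ with high probability, so inverse powers of $z-\lambda_i$ are uniformly controlled.

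The main obstacle is the Riemann-sum estimate $\int \frac{\dd\nu(k)}{z-k} - \frac1N\sum_i \frac1{z-\gamma_i} = O(N^{-1})$ uniformly for $z \ge C_+ + \delta$: one must confirm the error does not blow up as $z$ approaches the edge — but since we stay a fixed distance $\delta$ away, the function $k \mapsto 1/(z-k)$ and its derivative are bounded on $[C_-, C_+]$ uniformly in such $z$, so a first-order Taylor/midpoint argument on each quantile interval $[\gamma_{i+1}, \gamma_i]$ (whose $\nu$-mass is exactly $1/N$) gives the bound with a constant depending only on $\delta$. A minor subtlety is handling the edge intervals where $\frac{\dd\nu}{\dd x}$ vanishes like a square root, making $|\gamma_{i+1} - \gamma_i|$ as large as $O(N^{-1/3})$ near the edge; but the contribution of those few intervals is still $O(N^{-1})$ because the integrand's variation there is $O(|\gamma_{i+1}-\gamma_i|) = O(N^{-1/3})$ over a region of total $\nu$-measure $O(N^{-1})$ — actually one should sum $\sum_i |\gamma_{i} - \gamma_{i+1}| \cdot \frac1N \cdot \sup|(\tfrac{1}{z-k})'|$, and $\sum_i |\gamma_i - \gamma_{i+1}| = C_+ - C_-$ is bounded, giving $O(N^{-1})$ cleanly. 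I would present this quadrature lemma carefully and then the rest follows by the routine estimates sketched above.
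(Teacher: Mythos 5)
Your argument for part (i) is the same decomposition as the paper's: write $G'-\widehat{G}'$ as a ``rigidity'' error $\frac1N\sum_i\bigl(\frac1{z-\gamma_i}-\frac1{z-\lambda_i}\bigr)$ plus a ``quadrature'' error $\int\frac{\dd\nu(k)}{z-k}-\frac1N\sum_i\frac1{z-\gamma_i}$, bound the first via $\sum_i|\lambda_i-\gamma_i|\le N^{\epsilon}$ from Condition~\ref{cond:rigidity}, and show the second is $O(N^{-1})$. (The paper packages the middle term as $\widetilde G$, but it is the same object.) The only genuine difference is in the quadrature step: the paper exploits that $k\mapsto\frac1{z-k}$ is monotone for $z\ge C_++\delta$ and sandwiches $\frac1N\frac1{z-\gamma_i}$ between two shifted quantile integrals, then telescopes, whereas you use a first-order variation bound $\sum_i\frac1N|\gamma_i-\gamma_{i+1}|\sup_{[C_-,C_+]}|(\frac1{z-k})'|\le\frac{C_+-C_-}{N\delta^2}$. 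Both give $O(N^{-1})$ with a constant depending only on $\delta$; the monotonicity trick is slightly cleaner because it needs no derivative bound, while yours is perhaps more obviously uniform in $z$. Either is fine, and you were right to flag (and then dismiss) the issue of large quantile spacing at the soft edge --- it is the total length $\sum_i|\gamma_i-\gamma_{i+1}|=C_+-C_-$ that matters, not the individual spacings.

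For part (ii), the paper gives no written proof (``follows straightforwardly from the formula of $G^{(\ell)}$ and the rigidity''), and your argument is the correct one: on the rigidity event the spectrum lies in $[C_--\delta/2,C_++\delta/2]$ for $N$ large, and $\dist(z,[C_-,C_+])\ge\delta$ for $z\notin B_\delta$, so $|z-\lambda_i|\ge\delta/2$ for all $i$, giving $|G^{(\ell)}(z)|\le 2\beta\,\lone_{\{\ell=1\}}+(\ell-1)!\,(2/\delta)^\ell=O(1)$ for each fixed $\ell\ge 1$. You also correctly spotted that the $\ell=0$ case is literally false as stated: $G(z)=2\beta z-\frac1N\sum_i\log(z-\lambda_i)\sim 2\beta z-\log z$ as $|z|\to\infty$, which is not $O(1)$ on the unbounded set $\C\setminus B_\delta$. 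This is a harmless overstatement --- the paper only ever invokes this part for $\ell\ge2$ (in the Taylor expansions of Corollary~\ref{cor:G gamma} and Lemma~\ref{steepest descent sub}) --- but your proposed restriction to $\ell\ge1$, or the refinement $G(z)-2\beta z=O(\log|z|)$, is the correct reading.
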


\begin{proof}
For (i), set 
\beq \label{sub gamma 200}
	\widetilde G(z)=  2\beta z - \frac{1}{N} \sum_i  \log( z-  \gamma_i )
\eeq
where $\gamma_i$ is the classical location of the $i$-th eigenvalue as defined in~\eqref{eq:classicallocationdef}. 
Then from the rigidity, Condition \ref{cond:rigidity}, 
\beq \label{sub gamma 2}
\left| G'(z) - \widetilde G'(z)  \right|
= \left| \frac{1}{N} \sum_i \frac{(\lambda_i-\gamma_i)}{(z-\lambda_i)(z-\gamma_i)} \right| \leq \frac{C}{N} \sum_i |\lambda_i - \gamma_i| \leq \frac{C N^{\epsilon}}{N}
\eeq
uniformly in $z\ge C_++\delta$ with high probability. 
On the other hand, we claim that 
\beq \label{sub gamma 3}
	\left| \widetilde G'(z) -\widehat G'(z) \right|
	= \left| \frac{1}{N} \sum_i \frac{1}{z - \gamma_i} - \int_{C_-}^{C_+} \frac{\dd \nu(k)}{z - k} \right| \leq \frac{C}{N}
\eeq
uniformly in $z\ge C_++\delta$. 
For this, we define $\widehat \gamma_j$ by
\beq \label{tilde gamma}
\int_{\widehat \gamma_j}^{\infty} \dd \nu(k) = \frac{j}{N}, \qquad j=1,2,\cdots,N,
\eeq
with $\widehat \gamma_0 = C_+$. 
Note that $\widehat \gamma_i \leq \gamma_i \leq \widehat \gamma_{i-1}$. We then have for $i = 2, 3, \cdots, N-1$ that
\beq\label{eq:gahatgaine}
\int_{\widehat\gamma_{i+1}}^{\widehat\gamma_i} \frac{\dd \nu(k)}{z - k} \leq \frac{1}{N} \frac{1}{z - \gamma_i} \leq \int_{\widehat\gamma_{i-1}}^{\widehat\gamma_{i-2}} \frac{\dd \nu(k)}{z - k}
\eeq
for $z\ge C_++\delta$. 
Summing over $i$ and using the trivial estimates 
$\frac{1}{N} \frac{1}{z - \gamma_i} = O(N^{-1})$ and $\int_{\widehat\gamma_i}^{\widehat\gamma_{i-1}} \frac{\dd \nu(k)}{z - k} = O(N^{-1})$,
we find that the desired claim holds. The estimates~\eqref{sub gamma 2} and \eqref{sub gamma 3} imply~\eqref{eq:gammahatgamma0}. 

The part (ii) of the Lemma follows straightforwardly from the formula of $G^{(\ell)}(z)$
and the rigidity, Condition \ref{cond:rigidity}.
\end{proof}

\begin{cor} \label{subcritical gamma}
Assume Condition~\ref{cond:regular} and Condition~\ref{cond:rigidity}.
Let $\beta<\beta_c$. 
Let $\gamma$ be the number defined in Lemma~\ref{defofgamma} 
and let $\widehat{\gamma}$ be defined in~\eqref{stieltjes gamma2}. 
Then for every $\epsilon > 0$,
\beq\label{eq:gammahatgamma}
	\left| \gamma - \widehat\gamma \right| \leq \frac{N^{2\epsilon}}{N}
\eeq
with high probability. 
In particular, there is a constant $c>0$ such that 
\beq\label{eq:gammahatgamma22}
	\gamma-\lambda_1>c 
\eeq
with high probability. 
\end{cor}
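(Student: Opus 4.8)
The plan is to locate the random critical point $\gamma$ by sandwiching it between two deterministic points close to $\widehat\gamma$, using only the monotonicity of $G'$ on $(\lambda_1,\infty)$ (Lemma~\ref{defofgamma}) together with the deterministic approximation $G'\approx\widehat G'$ of Lemma~\ref{subcritical gamma0}(i). Since $\widehat\gamma$ is a fixed number with $\widehat\gamma>C_+$, set $\delta:=\tfrac12(\widehat\gamma-C_+)>0$. Because the claimed bound $N^{2\epsilon}/N$ only increases with $\epsilon$, it suffices to prove \eqref{eq:gammahatgamma} for all sufficiently small $\epsilon>0$, so I fix such an $\epsilon$.

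First I would record two purely deterministic facts about $\widehat G$. Since $\widehat G''(\widehat\gamma)=\int_{C_-}^{C_+}\frac{\dd\nu(k)}{(\widehat\gamma-k)^2}>0$ and $\widehat G''$ is continuous near $\widehat\gamma$, there exist $\delta_0>0$ and $c_0>0$ with $\widehat G''\ge c_0$ on $[\widehat\gamma-\delta_0,\widehat\gamma+\delta_0]$; integrating and using $\widehat G'(\widehat\gamma)=0$ gives $\widehat G'(\widehat\gamma+t)\ge c_0t$ and $\widehat G'(\widehat\gamma-t)\le-c_0t$ for $0\le t\le\delta_0$. Separately, the rigidity estimate for $k=1$ yields $\lambda_1\le\gamma_1+N^{-2/3+\epsilon}\le C_++N^{-2/3+\epsilon}$ with high probability, since $\gamma_1<C_+$ by \eqref{eq:classicallocationdef}. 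Intersecting this event with the one on which $\sup_{z\ge C_++\delta}|G'(z)-\widehat G'(z)|\le CN^{-1+\epsilon}$ (Lemma~\ref{subcritical gamma0}(i) with this $\epsilon$) still has high probability, and I work on that intersection.

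Next I would set $z_\pm:=\widehat\gamma\pm N^{2\epsilon-1}$. For $N$ large these lie in $[\widehat\gamma-\delta_0,\widehat\gamma+\delta_0]\cap(C_++\delta,\infty)$, so the deterministic bounds give $\widehat G'(z_+)\ge c_0N^{2\epsilon-1}$ and $\widehat G'(z_-)\le-c_0N^{2\epsilon-1}$; also $z_->C_++\delta>\lambda_1$ on our event. Since $|G'(z_\pm)-\widehat G'(z_\pm)|\le CN^{-1+\epsilon}$ and $N^{2\epsilon-1}/N^{-1+\epsilon}=N^{\epsilon}\to\infty$, for $N$ large we get $G'(z_+)>0>G'(z_-)$. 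As $G'$ is strictly increasing on $(\lambda_1,\infty)$ with unique zero $\gamma$ and both $z_\pm$ lie in that interval, this forces $z_-<\gamma<z_+$, i.e. $|\gamma-\widehat\gamma|<N^{2\epsilon-1}=N^{2\epsilon}/N$, which is \eqref{eq:gammahatgamma}. Then \eqref{eq:gammahatgamma22} follows on the same event: $\gamma>z_-\ge C_++\delta$ for $N$ large while $\lambda_1\le C_++N^{-2/3+\epsilon}<C_++\delta/2$, so $\gamma-\lambda_1>\delta/2=:c$.

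The one delicate point worth flagging is the apparent circularity: one cannot simply apply the mean value theorem to $G'$ between $\gamma$ and $\widehat\gamma$, since a priori one does not know that $\gamma$ sits in a region where $G''$ is bounded below (that is essentially the content of \eqref{eq:gammahatgamma22}, which we are trying to prove). The sandwich argument circumvents this by extracting the quantitative slope bound entirely from the deterministic function $\widehat G'$ near the deterministic point $\widehat\gamma$, and locating $\gamma$ afterwards purely by the monotonicity of $G'$, so no lower bound on $G''$ itself is ever needed beyond Lemma~\ref{subcritical gamma0}(i).
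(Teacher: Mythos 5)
Your argument is correct and is essentially the same as the paper's: evaluate $G'$ at $\widehat\gamma\pm N^{2\epsilon-1}$, use Lemma~\ref{subcritical gamma0}(i) with $\delta<(\widehat\gamma-C_+)/2$ to replace $G'$ by $\widehat G'$ up to an $O(N^{-1+\epsilon})$ error, use the positivity of $\widehat G''$ near $\widehat\gamma$ to see that $\widehat G'$ changes sign across that window with slope of order $N^{2\epsilon-1}\gg N^{-1+\epsilon}$, and conclude by the strict monotonicity of $G'$ on $(\lambda_1,\infty)$. The only cosmetic differences are that the paper obtains the slope bound via a Taylor expansion with $\widehat G''(\widehat\gamma)$ rather than a uniform lower bound $c_0$ on $\widehat G''$ in a neighborhood, and the paper states \eqref{eq:gammahatgamma22} as an immediate consequence of \eqref{eq:gammahatgamma}, rigidity, and $\widehat\gamma>C_+$ without spelling out the $\delta/2$ gap you made explicit.
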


\begin{proof}
Since $\widehat\gamma>C_+$, 
choosing $\delta \in (0, (\widehat\gamma -C_+)/2 )$ in Lemma~\ref{subcritical gamma0}, we find that 
\beq \label{sub gamma 1}
 	G'(\widehat\gamma \pm N^{-1+2\epsilon}) = \widehat G'(\widehat\gamma\pm N^{-1+2\epsilon})
	+ O(N^{-1+\epsilon})
\eeq
with high probability.
Now, since $\widehat G'(\widehat \gamma)=0$ and $\widehat G'''(z)=O(1)$ for $z$ near $\widehat\gamma$, the Taylor expansion of $\widehat G$ implies that 
\beq
	 G'(\widehat\gamma \pm  N^{-1+2\epsilon}) 
	= \pm \widehat G''(\widehat\gamma)  N^{-1+2\epsilon}
	 + O(N^{-2+4\epsilon}) + O(N^{-1+\epsilon}).
\eeq
Noting that $\widehat G''(\widehat\gamma)
= \int_{C_-}^{C_+} \frac{\dd \nu(k)}{(\widehat\gamma - k)^2} >0$, this shows that 
\beq
G'(\widehat\gamma - N^{-1+2\epsilon}) < 0, \qquad G'(\widehat\gamma + N^{-1+2\epsilon}) > 0
\eeq
with high probability. 
Since $G'(z)$ is an increasing function of $z$, this proves~\eqref{eq:gammahatgamma}. 

The estimate~\eqref{eq:gammahatgamma22} is a consequence of~\eqref{eq:gammahatgamma}, ~\eqref{rigidity}, ~\eqref{eq:classicalfromC}, and the fact that $\widehat\gamma$ is a non-random number, independent of $N$, satisfying $\widehat\gamma > C_+$.
\end{proof}

\begin{cor} \label{cor:G gamma}
Assume Condition~\ref{cond:regular} and Condition~\ref{cond:rigidity} and let $\beta<\beta_c$. 
Then for every $\epsilon > 0$
\beq\label{eq:Ggammacor}
G(\gamma) = G(\widehat\gamma) + O(N^{-2+4\epsilon}), \qquad G''(\gamma) = G''(\widehat\gamma) + O(N^{-1+2\epsilon})
\eeq
with high probability. 
\end{cor}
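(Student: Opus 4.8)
The plan is to deduce both estimates from Corollary~\ref{subcritical gamma}, the Taylor expansion of $G$ around $\widehat\gamma$, and part (ii) of Lemma~\ref{subcritical gamma0}, which controls the derivatives $G^{(\ell)}(z)$ uniformly away from the bulk. First I would fix a small $\delta\in(0,(\widehat\gamma-C_+)/2)$, so that by Corollary~\ref{subcritical gamma} the point $\gamma$ lies in the region $z\ge C_++\delta$ (indeed within distance $N^{-1+2\epsilon}$ of the non-random point $\widehat\gamma$) with high probability, and hence outside the box $B_\delta$ of Lemma~\ref{subcritical gamma0}(ii). On that region $G''(z)=O(1)$ and $G'''(z)=O(1)$ with high probability. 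Since $\gamma$ is the exact critical point, $G'(\gamma)=0$, so Taylor expanding $G$ at $\gamma$ and evaluating at $\widehat\gamma$ gives
\beq
	G(\widehat\gamma) = G(\gamma) + G'(\gamma)(\widehat\gamma-\gamma) + \tfrac12 G''(\xi)(\widehat\gamma-\gamma)^2 = G(\gamma) + O\!\left(|\widehat\gamma-\gamma|^2\right)
\eeq
for some $\xi$ between $\gamma$ and $\widehat\gamma$, using $G'(\gamma)=0$ and $G''(\xi)=O(1)$ (note $\xi$ is also in $z\ge C_++\delta$). Plugging in $|\gamma-\widehat\gamma|\le N^{-1+2\epsilon}$ from~\eqref{eq:gammahatgamma} yields $G(\gamma)=G(\widehat\gamma)+O(N^{-2+4\epsilon})$, which is the first claim.

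For the second estimate I would Taylor expand $G''$ instead: with $G'''(z)=O(1)$ uniformly on $z\ge C_++\delta$ (again Lemma~\ref{subcritical gamma0}(ii)),
\beq
	G''(\gamma) = G''(\widehat\gamma) + G'''(\eta)(\gamma-\widehat\gamma) = G''(\widehat\gamma) + O\!\left(|\gamma-\widehat\gamma|\right) = G''(\widehat\gamma) + O(N^{-1+2\epsilon})
\eeq
for some $\eta$ between $\gamma$ and $\widehat\gamma$, once more invoking~\eqref{eq:gammahatgamma}. All the error bounds hold on the intersection of the high-probability events coming from Corollary~\ref{subcritical gamma} and Lemma~\ref{subcritical gamma0}(ii), which is itself a high-probability event, so the conclusion holds with high probability.

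There is essentially no serious obstacle here; the corollary is a routine consequence of the machinery already assembled. The only point requiring a little care is making sure the intermediate points $\xi$, $\eta$ produced by Taylor's theorem lie in the region where the derivative bounds of Lemma~\ref{subcritical gamma0}(ii) apply — but since both $\gamma$ and $\widehat\gamma$ are within $O(N^{-1+2\epsilon})$ of $\widehat\gamma>C_++2\delta$, any point between them is at least $C_++\delta$ for large $N$, so this is immediate. One should also note that we do not even need the sharper exponent $N^{-1+\epsilon}$ from Lemma~\ref{subcritical gamma0}(i) here; the cruder bound~\eqref{eq:gammahatgamma} on $|\gamma-\widehat\gamma|$ suffices for both displayed estimates.
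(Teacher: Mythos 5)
Your proof is correct and follows essentially the same route as the paper: Taylor expand $G$ and $G''$ at $\gamma$ (using $G'(\gamma)=0$), invoke Lemma~\ref{subcritical gamma0}(ii) for $O(1)$ bounds on the higher derivatives away from the bulk, and then apply the estimate $|\gamma-\widehat\gamma|\le N^{-1+2\epsilon}$ from Corollary~\ref{subcritical gamma}. The only difference is that you spell out the location of the Lagrange-remainder intermediate points, which the paper leaves implicit; this is a minor refinement rather than a different argument.
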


\begin{proof}
From the Taylor expansion, the definition of $\gamma$, and Lemma~\ref{subcritical gamma0} (ii), 
\beq
	G(\widehat\gamma) = G(\gamma) + G'(\gamma) (\widehat\gamma - \gamma) + O(|\widehat\gamma - \gamma|^2)
	= G(\gamma) + O(|\widehat\gamma - \gamma|^2)
\eeq
and
\beq
G''(\gamma) = G''(\widehat\gamma) + O(|\gamma - \widehat\gamma|).
\eeq
The estimate~\eqref{eq:Ggammacor} now follows from Corollary~\ref{subcritical gamma}. 
\end{proof}

We now evaluate the integral~\eqref{integral representation} using the method of steepest-descent.

\begin{lem} \label{steepest descent sub}
Assume Condition~\ref{cond:regular}  and Condition~\ref{cond:rigidity} and let $\beta<\beta_c$. 
Then for every $\epsilon > 0$,  
\beq
	 \int_{\gamma - \ii \infty}^{\gamma + \ii \infty} e^{\frac{N}{2} G(z)} \dd z = \ii e^{\frac{N}{2} G(\widehat\gamma)} \sqrt{\frac{4\pi}{N G''(\widehat\gamma)}} \left( 1 + O(N^{-1+6\epsilon}) \right)
\eeq
with high probability. 
\end{lem}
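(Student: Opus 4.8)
The plan is to apply the classical Laplace/steepest-descent method to the contour integral $\int_{\widehat\gamma-\ii\infty}^{\widehat\gamma+\ii\infty}e^{\frac N2 G(z)}\dd z$, where I first deform the contour from the line $\re z=\gamma$ (the true saddle) to the line $\re z=\widehat\gamma$; this deformation is harmless because $G$ is analytic in the region $\re z>\lambda_1$ and, by Corollary~\ref{subcritical gamma}, $\gamma$ and $\widehat\gamma$ differ by $O(N^{-1+2\epsilon})$ with high probability while $\gamma-\lambda_1>c$, so the strip between the two vertical lines lies in the domain of analyticity and the horizontal connecting segments at $\pm\ii T$ contribute nothing as $T\to\infty$. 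Parametrize $z=\widehat\gamma+\ii y$, so the integral becomes $\ii e^{\frac N2 G(\widehat\gamma)}\int_{-\infty}^{\infty}e^{\frac N2(G(\widehat\gamma+\ii y)-G(\widehat\gamma))}\dd y$, and the goal is to show this last integral equals $\sqrt{4\pi/(N G''(\widehat\gamma))}\,(1+O(N^{-1+6\epsilon}))$ with high probability.

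The core of the argument splits the $y$-integral into a central region $|y|\le N^{-1/2+\epsilon}$ and a tail region $|y|>N^{-1/2+\epsilon}$. On the central region I Taylor expand: writing $g(y):=\re\big(G(\widehat\gamma+\ii y)-G(\widehat\gamma)\big)$, I have $g(0)=g'(0)=0$ (since $\widehat\gamma$ is essentially the critical point; more precisely $G'(\widehat\gamma)=\widehat G'(\widehat\gamma)+O(N^{-1+\epsilon})=O(N^{-1+\epsilon})$ by Lemma~\ref{subcritical gamma0}(i), and one checks this error is negligible after multiplication by $N$, or alternatively one expands around $\gamma$ itself and transfers via Corollary~\ref{cor:G gamma}), and $g''(0)=-G''(\widehat\gamma)<0$. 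Using Lemma~\ref{subcritical gamma0}(ii), the third derivative $G'''$ is $O(1)$ uniformly on the relevant region, so $\frac N2 g(y)=-\frac N2 G''(\widehat\gamma)y^2+O(N|y|^3)=-\frac N2 G''(\widehat\gamma)y^2+O(N^{-1/2+3\epsilon})$ on the central region. Hence $e^{\frac N2 g(y)}$ is, up to a multiplicative $(1+O(N^{-1/2+3\epsilon}))$ factor, the Gaussian $e^{-\frac N2 G''(\widehat\gamma)y^2}$; integrating the Gaussian over all of $\R$ gives $\sqrt{2\pi/(N G''(\widehat\gamma))}$, wait — over the central window it gives $\sqrt{2\pi/(N G''(\widehat\gamma))}$ up to an error $e^{-cN^{2\epsilon}}$ from completing the window to $\R$, and the constant $\sqrt{4\pi/(NG''(\widehat\gamma))}$ in the statement comes from the factor $\frac N2$ in the exponent. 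One also must control $\im\big(G(\widehat\gamma+\ii y)-G(\widehat\gamma)\big)$ on the central window: it is $O(N|y|^3)=O(N^{-1/2+3\epsilon})$ by the same Taylor expansion, hence its exponential is $1+O(N^{-1/2+3\epsilon})$, which is absorbed.

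For the tail region $|y|>N^{-1/2+\epsilon}$ I need a uniform bound showing $\re\big(G(\widehat\gamma+\ii y)-G(\widehat\gamma)\big)\le -cN^{2\epsilon}/N$ (so that $e^{\frac N2 g(y)}$ is exponentially small), with enough decay in $|y|$ to make the integral converge. The real part is $g(y)=-\frac1N\sum_i\frac12\log\frac{(\widehat\gamma-\lambda_i)^2+y^2}{(\widehat\gamma-\lambda_i)^2}=-\frac1{2N}\sum_i\log\big(1+\frac{y^2}{(\widehat\gamma-\lambda_i)^2}\big)$, which is manifestly negative and, since $\widehat\gamma-\lambda_i\le\widehat\gamma-\lambda_N=O(1)$ for all $i$ with high probability (using rigidity), is bounded above by $-c\log(1+y^2)$ for $|y|$ of order $1$ or larger and by $-cNy^2\ge -cN\cdot N^{-1+2\epsilon}=-cN^{2\epsilon}$ for $N^{-1/2+\epsilon}<|y|\lesssim1$ (using that a positive fraction of eigenvalues keep $\widehat\gamma-\lambda_i$ bounded below). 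Thus the tail contributes at most $e^{-cN^{2\epsilon}}\cdot\big(O(1)+\int_{|y|>1}(1+y^2)^{-cN/2}\dd y\big)$, which is super-polynomially small and dominated by the main term $\sqrt{4\pi/(NG''(\widehat\gamma))}$. The main obstacle is the bookkeeping in the tail estimate: I need the logarithmic decay $g(y)\le -c\log(1+y^2)$ to be uniform, which forces me to know that the smallest eigenvalue $\lambda_N$ (equivalently $\widehat\gamma-\lambda_N$) does not blow up — this follows from Condition~\ref{cond:rigidity} applied at $k=N$ together with~\eqref{eq:classicalfromC} at the lower edge (or simply $\gamma_N$ being bounded), and from the fact that $\widehat\gamma$ is a fixed number $>C_+$; assembling these estimates cleanly, and tracking that the accumulated errors are all $O(N^{-1+6\epsilon})$ (the central-region relative error $N^{-1/2+3\epsilon}$ squared-or-worse bookkeeping, plus the $\gamma$-versus-$\widehat\gamma$ transfer errors from Corollary~\ref{cor:G gamma}), is the part that requires care rather than any new idea.
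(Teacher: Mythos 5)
Your overall strategy --- steepest descent on a vertical line, a central window of width $N^{-1/2+\epsilon}$, Taylor expansion in the window, exponentially small tails --- is the same as the paper's. The genuine variation is that you deform the contour to the deterministic line $\re z = \widehat\gamma$ rather than staying on the line through the random saddle $\gamma$, as the paper does. This is a legitimate alternative: it avoids invoking Corollary~\ref{cor:G gamma} at the end, at the cost of having to carry a nonzero linear term $G'(\widehat\gamma) = O(N^{-1+\epsilon})$ through the expansion. But it is precisely in that bookkeeping that your proposal has a real gap.

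The gap: you bound the imaginary part of $\frac N2\bigl(G(\widehat\gamma + \ii y) - G(\widehat\gamma)\bigr)$ by $O(N^{-1/2+3\epsilon})$ on the central window and then assert that ``its exponential is $1 + O(N^{-1/2+3\epsilon})$, which is absorbed.'' As written, this yields only a relative error of $O(N^{-1/2+3\epsilon})$ in the integral, which is much weaker than the lemma's claimed $O(N^{-1+6\epsilon})$. To recover the stated rate you must use that the dominant imaginary terms --- $\frac N2 y\, G'(\widehat\gamma)$ and $-\frac{N}{12} y^3 G'''(\widehat\gamma)$ --- are odd functions of $y$; after multiplying by the even Gaussian factor $e^{-\frac N4 G''(\widehat\gamma) y^2}$ and integrating over the symmetric window, their first-order contribution vanishes, and only the square, $O(N^{-1+6\epsilon})$, survives. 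The paper makes exactly this parity observation explicit (``The integral in the middle vanishes since the integrand is an odd function of $t$''). Your closing remark about ``squared-or-worse bookkeeping'' hints that you know this is the mechanism, but the proof as written never carries it out, and without the parity cancellation the central estimate is off by roughly a factor of $N^{1/2-3\epsilon}$. Two smaller slips to fix: the quadratic term in the exponent is $-\frac N4 G''(\widehat\gamma) y^2$, not $-\frac N2 G''(\widehat\gamma) y^2$ (you implicitly self-correct when computing the Gaussian integral, but the displayed formula is wrong); and the $O(N|y|^3)$ error you attach to the real part $g(y)$ is misplaced --- the cubic Taylor term $-\frac{\ii y^3}{6} G'''(\widehat\gamma)$ is purely imaginary, so the real-part error is actually $O(N|y|^4) = O(N^{-1+4\epsilon})$, which is harmless; the cubic term lives in the imaginary part and is precisely what needs the parity argument above.
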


\begin{proof}
We had chosen $\gamma$ as the critical point of $G(z)$ such that $\gamma>\lambda_1$. 
For this proof, it is enough to use the straight line $\gamma+\ii\R$ for the contour instead of the path of steepest-descent. 
Changing the variables, we have 
\beq \begin{split} \label{steep sub}
\int_{\gamma - \ii \infty}^{\gamma + \ii \infty} e^{\frac{N}{2} G(z)} \dd z &= \frac{\ii}{\sqrt N} \int_{-\infty}^{\infty} \exp \left[ \frac{N}{2} G \big(\gamma + \ii \frac{t}{\sqrt N} \big) \right] \dd t \\
&= \frac{\ii e^{\frac{N}{2} G(\gamma)}}{\sqrt N} \int_{-\infty}^{\infty} \exp \left[ \frac{N}{2} \left( G \big(\gamma + \ii \frac{t}{\sqrt N} \big) - G(\gamma) \right) \right] \dd t.
\end{split} \eeq
We now estimate the integral in the right hand side of \eqref{steep sub}. 
First, we have
\beq \begin{split}
&\int_{-N^{\epsilon}}^{N^{\epsilon}} \exp \left[ \frac{N}{2} \left( G \big(\gamma + \ii \frac{t}{\sqrt N} \big) - G(\gamma) \right) \right] \dd t \\
&= \int_{-N^{\epsilon}}^{N^{\epsilon}} \exp \left[ \frac{N}{2} \left( \frac{G''(\gamma)}{2} \frac{(\ii t)^2}{N} + \frac{G'''(\gamma)}{6} \frac{(\ii t)^3}{N^{3/2}} + O(N^{-2+4\epsilon}) \right) \right] \dd t \\
&= \int_{-N^{\epsilon}}^{N^{\epsilon}} \exp \left[ -\frac{G''(\gamma)}{4} t^2 \right] \dd t - \ii \int_{-N^{\epsilon}}^{N^{\epsilon}} \frac{G'''(\gamma)}{12} \frac{t^3}{\sqrt N} \exp \left[ -\frac{G''(\gamma)}{4} t^2 \right] \dd t + O(N^{-1+6\epsilon})
\end{split} \eeq
with high probability, 
where we used that $G''(\gamma) > 0$ and $G^{(\ell)}(\gamma + \ii t) = O(1)$ for any $t \in \R$ and $\ell=3,4$ (see Lemma~\ref{subcritical gamma0} (ii)). 
The integral in the middle vanishes since the integrand is an odd function of $t$. 
From the estimate $\int_{N^{\epsilon}}^{\infty} e^{-t^2} \dd t = O(N^{-\epsilon}e^{-N^{2\epsilon}})$, 
we obtain that
\beq\label{eq:steepdescep}
\int_{-N^{\epsilon}}^{N^{\epsilon}} \exp \left[ \frac{N}{2} \left( G \big(\gamma + \ii \frac{t}{\sqrt N} \big) - G(\gamma) \right) \right] \dd t = \sqrt{\frac{4\pi}{G''(\gamma)}} \left( 1 + O(N^{-1+6\epsilon}) \right).
\eeq
Second, the tail part of the integral in the right hand side of \eqref{steep sub} satisfies
\beq \begin{split} \label{eq:tailestimate}
&\left| \int_{N^{\epsilon}}^{\infty} \exp \left[ \frac{N}{2} \left( G \big(\gamma + \ii \frac{t}{\sqrt N} \big) - G(\gamma) \right) \right] \dd t \right| \\
&\leq \int_{N^{\epsilon}}^{\infty} \left| \exp \left[ -\frac{1}{2} \sum_{j=1}^N \log \left( \frac{\gamma - \lambda_j + \ii t N^{-1/2}}{\gamma - \lambda_j} \right) \right] \right| \dd t \\
&\leq \int_{N^{\epsilon}}^{\infty} \exp \left[ -\frac{N}{4} \log \left( 1 + \frac{c t^2}{N} \right) \right] \dd t \\
&\leq \int_{N^{\epsilon}}^N e^{-\frac{c}{8}N^{2\epsilon}} \dd t + \int_N^{\infty} (cN^{-1} t^2)^{-N/4} \dd t 
=O(e^{-N^{\epsilon}}) + O(N^{-N/8})
\end{split} \eeq
with high probability, where we used~\eqref{eq:gammahatgamma22}. 
Thus, the tail part is negligible, and hence we obtain from~\eqref{steep sub} that
\beq
\int_{\gamma - \ii \infty}^{\gamma + \ii \infty} e^{\frac{N}{2} G(z)} \dd z = \frac{\ii e^{\frac{N}{2} G(\gamma)}}{\sqrt N} \sqrt{\frac{4\pi}{G''(\gamma)}} \left( 1 + O(N^{-1+6\epsilon}) \right).
\eeq

Finally, using Corollary \ref{cor:G gamma}, we conclude that
\beq
\int_{\gamma - \ii \infty}^{\gamma + \ii \infty} e^{\frac{N}{2} G(z)} \dd z = \frac{\ii e^{\frac{N}{2} G(\widehat\gamma)}}{\sqrt N} \sqrt{\frac{4\pi}{G''(\widehat\gamma)}} \left( 1 + O(N^{-1+6\epsilon}) \right)
\eeq
with high probability.
\end{proof}

We now prove Theorem \ref{thm:sub}.

\begin{proof}[Proof of Theorem \ref{thm:sub}]
From Lemmas \ref{inverse laplace} and \ref{steepest descent sub}, for every $\epsilon>0$, 
\beq
	Z_N = \ii C_N e^{\frac{N}{2} G(\widehat\gamma)} \sqrt{\frac{4\pi}{N G''(\widehat\gamma)}} \left( 1 + O(N^{-1+\epsilon}) \right)
\eeq
with high probability. 
Since 
\beq \label{n sphere}
	C_N = \frac{\Gamma(N/2)}{2\pi \ii (N\beta)^{N/2-1}}
	=\frac{\sqrt{N}\beta}{\ii \sqrt{\pi} (2\beta e)^{N/2}} \left( 1 + O(N^{-1}) \right)
\eeq
from the Stirling's formula, we find that 
\beq \begin{split}
	Z_N
	&=e^{\frac{N}{2} G(\widehat\gamma) } \frac{2\beta }{ (2\beta e)^{N/2} \sqrt{G''(\widehat\gamma)}} \left( 1 + O(N^{-1+\epsilon}) \right)
\end{split} \eeq
and hence
\beq \label{eq:Ftempsub} \begin{split}
F_N &= \frac{1}{N} \log Z_N 
= \frac{1}{2} \left( G(\widehat\gamma) -1 - \log (2\beta)  \right) 
+ \frac1{N} \left( \log(2\beta) -\frac12  \log G''(\widehat\gamma)\right)  + O(N^{-2+\epsilon}) \\
\end{split} \eeq
with high probability. 

Define the functions
\beq
\varphi(x) := \log (\widehat\gamma - x), \qquad \psi(x):= \frac1{(\widehat\gamma -x)^2}
\eeq
for $x\in [C_+, C_-]$ and extend $\varphi$ and $\psi$ to  bounded $C^\infty$ functions with compact support on the real line. 
We then have
\beq
G(\widehat\gamma) = 2\beta \widehat\gamma - \frac{1}{N} \sum_i \varphi(\lambda_i), 
\qquad G''(\widehat\gamma) = \frac1{N} \sum_i \psi(\lambda_i)
\eeq
with high probability. 
Regarding $\widehat\gamma$ as a function of $\beta$, set
\beq \label{eq:f_0}
f_0(\beta) := \int_{C_-}^{C_+} \log (\widehat\gamma - k) \dd \nu(k),
\qquad f_2(\beta) := \int_{C_-}^{C_+} \frac{\dd \nu(k)}{(\widehat\gamma - k)^2}.
\eeq
We find from Condition \ref{cond:linear} that 
\beq
	\caN_{\varphi} := \sum_i \varphi(\lambda_i) - N \int_{C_-}^{C_+} \varphi(k) \dd \nu(k)
=\sum_i \varphi(\lambda_i) - N f_0(\beta) 
\eeq
converges in distribution to Gaussian random variable with mean $\mean(\varphi)$ and variance $V(\varphi)$. 
Similarly, we also have that
\beq
	\caN_{\psi} :=\sum_i \psi(\lambda_i) - N\int_{C_-}^{C_+} \psi(k) \dd \nu(k)
	= \sum_i \psi(\lambda_i) - N f_2(\beta) 
\eeq
converges in distribution to Gaussian random variable with mean $\mean(\psi)$ and variance $V(\psi)$.
 
Thus,~\eqref{eq:Ftempsub} becomes
\beq \begin{split} \label{eq:Ftempsub2}
F_N &= \left( \beta \widehat\gamma - \frac{f_0(\beta)+1 + \log (2\beta)}{2} \right) - \frac1{2N} \caN_{\varphi} \\
&\qquad +  \frac{2 \log (2\beta) - \log ( f_2(\beta))}{2N} - \frac{1}{2N} \log \left( 1 + \frac{\caN_{\psi}}{N f_2(\beta)} \right) + O(N^{-1+\epsilon})
\end{split} \eeq
with high probability. Since $\caN_{\psi}$ converges to a Gaussian random variable, we see, in particular, that the random variable $\log ( 1 + \frac{\caN_{\psi}}{N f_2(\beta)} )$ in the right hand side of \eqref{eq:Ftempsub2} converges in distribution to $0$ as $N \to \infty$. Since the convergence in distribution to the constant $0$ implies the convergence in probability to $0$, using Slutsky theorem, we conclude that
$$
N F_N - N \left( \beta \widehat\gamma - \frac{f_0(\beta)+1 + \log (2\beta)}{2} \right) -  \frac{2 \log (2\beta) - \log ( f_2(\beta))}{2}
$$
converges in distribution to a Gaussian with mean $-\frac1{2} \mean(\varphi)$ and variance $\frac1{4} V(\varphi)$. This completes the proof of the theorem.

\end{proof}

\section{Low temperature case} \label{sec:sup}

In this section, we consider the case $\beta > \beta_c$ and prove Theorem \ref{thm:sup}. 
As in the last section, we only give a proof for symmetric random matrices; the proof for Hermitian random matrices is similar. 

As we saw in the previous section, the location of $\gamma$, the critical point of the function $G$, is crucial in the asymptotic evaluation of the integral in~\eqref{integral representation}. 
When $\beta<\beta_c$ we approximated $\gamma$ by the deterministic number $\widehat\gamma$ 
that is the critical point of the deterministic approximation $\widehat G(z)$ of the function $G(z)$. 
However, when $\beta>\beta_c$, $\widehat G(z)$ does not have any critical point in $z>C_+$ and we cannot approximate $\gamma$ by a deterministic number. 
The following lemma shows that $\gamma$ is close to $\lambda_1$ up to order about $1/N$.

\begin{lem} \label{supercritical gamma}
Assume Conditions \ref{cond:regular} and \ref{cond:rigidity}.
Let $\beta > \beta_c$. 
Then for every $0<\epsilon <\frac14$,
\beq
\frac{1}{3\beta N} \leq \gamma - \lambda_1 \leq \frac{N^{4\epsilon}}{N}
\eeq
with high probability. 
\end{lem}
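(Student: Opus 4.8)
The plan is to pin down $\gamma$ by using that, as shown in the proof of Lemma~\ref{defofgamma}, $G'$ is strictly increasing on $(\lambda_1,\infty)$ with its unique zero at $\gamma$; hence it suffices to determine the sign of $G'$ at the two candidate points $\lambda_1+\frac1{3\beta N}$ and $\lambda_1+N^{4\epsilon-1}$. For the lower bound, which is in fact deterministic, put $z_0=\lambda_1+\frac1{3\beta N}$. In the formula \eqref{eq:G'defn} for $G'$, the single term $i=1$ already contributes $\frac1N\cdot\frac1{z_0-\lambda_1}=3\beta$, while every other term $\frac1N\cdot\frac1{z_0-\lambda_i}$ with $i\ge2$ is nonnegative since $z_0>\lambda_1\ge\lambda_i$. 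Therefore $G'(z_0)\le 2\beta-3\beta<0=G'(\gamma)$, and monotonicity of $G'$ forces $z_0<\gamma$, i.e. $\gamma-\lambda_1\ge\frac1{3\beta N}$.

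For the upper bound, set $\eta:=N^{4\epsilon-1}$ and $z_1:=\lambda_1+\eta$, and note $\eta\to0$ and $\eta\gg N^{-1}$ because $0<\epsilon<\frac14$. It is enough to show that, with high probability, $\frac1N\sum_{i=1}^N\frac1{z_1-\lambda_i}<2\beta$, since then $G'(z_1)>0=G'(\gamma)$ and monotonicity gives $\gamma<z_1$. I work on the rigidity event of Condition~\ref{cond:rigidity}, on which also $|\lambda_1-C_+|\le N^{-2/3+\epsilon}$ by \eqref{rigidity} and \eqref{eq:classicalfromC}, and split the sum. The $i=1$ term equals $\frac1{N\eta}=N^{-4\epsilon}=o(1)$. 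For the ``edge'' indices $i\ge2$ with $C_+-\lambda_i\le T:=\max(\eta,N^{-2/3+\epsilon})$, the square-root decay \eqref{eq:sqrtb} of $\nu$ together with rigidity shows there are $O(NT^{3/2})$ of them, and each contributes at most $\frac1{N\eta}$; their total is $O(T^{3/2}/\eta)$, which is $o(1)$ whether $\eta\le N^{-2/3+\epsilon}$ or $\eta> N^{-2/3+\epsilon}$, using $\epsilon<\frac14$. For the remaining ``bulk'' indices with $C_+-\lambda_i>T$, the denominator $z_1-\lambda_i$ is comparable to $C_+-\lambda_i$, so $\lambda_1$ may be replaced by $C_+$ and $\lambda_i$ by its classical location $\gamma_i$ at a cost of $o(1)$ (the rigidity errors are divided by denominators bounded below on the bulk), after which monotonicity of $x\mapsto\frac1{C_+-x}$ yields the telescoping bound $\frac1N\sum_{\mathrm{bulk}}\frac1{C_+-\gamma_i}\le\int_{C_-}^{C_+}\frac{\dd\nu(x)}{C_+-x}=2\beta_c$. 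Adding the three pieces, $\frac1N\sum_i\frac1{z_1-\lambda_i}\le 2\beta_c+o(1)$, which is $<2\beta$ for all large $N$ since $\beta>\beta_c$, proving $\gamma-\lambda_1\le N^{4\epsilon-1}$.

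The delicate step is the edge estimate: rigidity controls the displacements $|\lambda_i-\gamma_i|$ but not the gaps between nearby eigenvalues, so individual summands $\frac1{z_1-\lambda_i}$ near the top of the spectrum can be as large as $\eta^{-1}=N^{1-4\epsilon}$. What rescues the argument is that $\eta=N^{4\epsilon-1}$ is chosen large enough (much larger than the mean eigenvalue spacing $N^{-1}$ at the edge) that, by the $\sqrt{C_+-x}$ density, only $O(NT^{3/2})$ eigenvalues can cluster within the regularization window --- too few to overwhelm the bound --- yet still small enough ($\epsilon<\frac14$) that $\gamma-\lambda_1\to0$ and the bulk sum stays strictly below $2\beta_c$. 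Balancing these two opposing requirements is exactly what dictates the exponent $N^{4\epsilon}$ and the constraint $\epsilon<\frac14$.
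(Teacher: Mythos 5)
Your overall strategy is exactly the paper's: pin down $\gamma$ by the monotonicity of $G'$ on $(\lambda_1,\infty)$ and determine the sign of $G'$ at the two test points $\lambda_1+\tfrac1{3\beta N}$ (deterministic, using only the $i=1$ term) and $\lambda_1+N^{4\epsilon-1}$ (probabilistic, reducing the sum to $\approx 2\beta_c$). The paper splits the sum for the upper bound by index, into $i\le N^{3\epsilon}$, the middle range, and $i>N-N^{3\epsilon}$; your split by distance $C_+-\lambda_i \lessgtr T$ is equivalent via the classical-location estimate \eqref{eq:classicalfromC}, with your threshold $T$ corresponding to index $\sim NT^{3/2}$, and the third piece of the paper's split is absorbed into your telescoping bulk bound. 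So this is the same route, repackaged.

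One step is stated too loosely. With $T=\max(\eta,N^{-2/3+\epsilon})$ and $\eta\le N^{-2/3+\epsilon}$ (i.e. $\epsilon\le\tfrac19$), the fluctuation $|C_+-z_1|$ can itself be of order $T$, so for bulk indices with $C_+-\lambda_i$ only slightly above $T$ the ratio $\tfrac{z_1-\lambda_i}{C_+-\lambda_i}$ is \emph{not} $1+o(1)$; the ``replace $\lambda_1$ by $C_+$ at cost $o(1)$'' claim fails there. The argument survives because those near-threshold indices are few: for $T<C_+-\lambda_i<CT$ the count is still $O(NT^{3/2})$ and each term is $\le\frac1{N\eta}$, so their total is $O(T^{3/2}/\eta)=o(1)$, exactly the same bound as your edge band. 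To make the write-up airtight you should either enlarge the edge band to $C_+-\lambda_i\le N^{\epsilon'}T$ for some small $\epsilon'>0$ (after which the remaining bulk does satisfy a genuine $1+o(1)$ comparability and the telescoping inequality gives $\le 2\beta_c(1+o(1))$), or insert an explicit intermediate band as above. The paper effectively does the former, since its index cutoff $N^{3\epsilon}$ corresponds to a distance threshold $\sim N^{-2/3+2\epsilon}$, which exceeds the rigidity/fluctuation scale $N^{-2/3+\epsilon}$ of $\lambda_1$ by the needed factor $N^{\epsilon}$.
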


\begin{proof}
Since 
\beq
	G'(z) =2\beta - \frac1{N} \sum_i \frac{1}{z-\lambda_i} < 2\beta - \frac1{N} \frac{1}{z-\lambda_1}, \qquad z>\lambda_1,
\eeq
we see that $G'(\lambda_1 + \frac{1}{3\beta N}) < 0$.

Since $G'(z)$ is an increasing function for $z>\lambda_1$, the Lemma is proved if we show that $G'(\lambda_1 + N^{-1 + 4\epsilon}) > 0$ with high probability. 
By Condition \ref{cond:rigidity}, we may assume that the eigenvalues $\lambda_i$'s satisfy the rigidity~\eqref{rigidity}: this event occurs with high probability. 
For such $\lambda_i$'s we need to show that  $G'(\lambda_1 + N^{-1 + 4\epsilon}) > 0$. 
In order to show this, we write 
\beq
G'(z) = 2\beta - \frac{1}{N} \sum_{i=1}^{N^{3\epsilon}} \frac{1}{z - \lambda_i} - \frac{1}{N} \sum_{i=N^{3\epsilon}+1}^{N-N^{3\epsilon}} \frac{1}{z - \lambda_i} - \frac{1}{N} \sum_{i=N-N^{3\epsilon}+1}^{N} \frac{1}{z - \lambda_i}
\eeq
with $z=\lambda_1 + N^{-1 + 4\epsilon}$. 
For $1 \leq i \leq N^{3\epsilon}$, since $\lambda_1\ge \lambda_i$,
\beq
\frac{1}{N} \sum_{i=1}^{N^{3\epsilon}} \frac{1}{\lambda_1 + N^{-1 + 4\epsilon} - \lambda_i} = O(N^{-\epsilon}).
\eeq
For $N^{3\epsilon} < i \leq N-N^{3\epsilon}$, we have from the rigidity condition \eqref{rigidity} that
$|\lambda_i - \gamma_i| \leq N^{-2/3}$. 
We also note that $C_+ -\gamma_i \geq C^{-1}N^{-2/3 + 2\epsilon}$ from~\eqref{eq:classicalfromC}. 
On the other hand, since $\lambda_1-\gamma_1=O(N^{-2/3+\epsilon})$ from~\eqref{rigidity} and $C_+-\gamma_1=O(N^{-2/3})$ from~\eqref{eq:classicalfromC}, 
we have $\lambda_1=C_++O(N^{-2/3+\epsilon})$. 
Therefore we find that
\beq
\frac{1}{N} \sum_{i=N^{3\epsilon}+1}^{N-N^{3\epsilon}} \frac{1}{\lambda_1 + N^{-1 + 4\epsilon} - \lambda_i} = \frac{1}{N} \sum_{i=N^{3\epsilon}+1}^{N-N^{3\epsilon}} \frac{1}{C_+ - \gamma_i} \left( 1 + O(N^{-\epsilon}) \right).
\eeq
The right hand side can be estimated by applying the idea used in the proof of Lemma \ref{subcritical gamma0}. Recall the definition of $\widehat \gamma_i$ in \eqref{tilde gamma}. Summing the inequalities (see~\eqref{eq:gahatgaine})
\beq \label{sum_integral 1}
\int_{\widehat\gamma_{i+1}}^{\widehat\gamma_i} \frac{\dd \nu(k)}{C_+ - k} \leq \frac{1}{N} \frac{1}{C_+ - \gamma_i} \leq \int_{\widehat\gamma_{i-1}}^{\widehat\gamma_{i-2}} \frac{\dd \nu(k)}{C_+ - k}
\eeq
over $i$ from $N^{3\epsilon}+1$ to $N-N^{3\epsilon}$, 
and recalling that $\beta_c=\frac12 \int_{C_-}^{C_+} \frac{\dd \nu(k)}{C_+ - k}$ by Definition~\ref{def:beta_c},  
we find that
\beq \begin{split} \label{sum_integral 2}
	&\left| \frac{1}{N} \sum_{i=N^{3\epsilon}+1}^{N-N^{3\epsilon}} \frac{1}{C_+ - \gamma_i} -2\beta_c  \right| 
	= \left| \frac{1}{N} \sum_{i=N^{3\epsilon}+1}^{N-N^{3\epsilon}} \frac{1}{C_+ - \gamma_i} - \int_{C_-}^{C_+} \frac{\dd \nu(k)}{C_+ - k} \right| \\
&\leq C \left( \int_{C_-}^{\widehat\gamma_{N-N^{3\epsilon} -2}} \frac{\dd \nu(k)}{C_+ - k} + \int_{\widehat\gamma_{N^{3\epsilon} +2}}^{C_+} \frac{\dd \nu(k)}{C_+ - k} \right) = O(N^{-1/3 + \epsilon}).
\end{split} \eeq
Finally, for $N-N^{3\epsilon} < i \leq N$, since $\lambda_1- \lambda_i\ge 1$,
we find 
\beq
\frac{1}{N} \sum_{i=N-N^{3\epsilon}+1}^{N} \frac{1}{\lambda_1 + N^{-1 + 4\epsilon} - \lambda_i} = O(N^{-1 + 3\epsilon}).
\eeq 
Combining the estimates, we find that 
\beq
G'(\lambda_1 + N^{-1 + 4\epsilon}) = 2\beta - 2\beta_c + o(1) > 0.
\eeq
This proves the lemma.
\end{proof}

We will show in Lemma \ref{steepest descent sup} below that the method of steepest-descent still applies and the main contribution to the integral representation of $Z_N$ comes from $G(\gamma)$. 
The value of $G(\gamma)$ is, heuristically, 
\beq
\begin{split}
	G(\gamma) 
	&= 2\beta \gamma - \frac1{N} \sum_{i=1}^N \log (\gamma-\lambda_i) \\
	&\approx 2\beta \gamma - \frac1{N} \sum_{i=1}^N \left[ \log(C_+- \lambda_i) + \frac1{C_+-\lambda_i}
	(\gamma-C_+) \right]\\
	&\approx 2\beta\gamma - \int_{C_-}^{C_+} \log (C_+-z)d\nu(z) - \int_{C_-}^{C_+} \frac{d\nu(z)} {C_+-z} (\gamma-C_+)\\
	&\approx 2\beta\lambda_1 - \int_{C_-}^{C_+} \log (C_+-z)d\nu(z) - 2\beta_c (\lambda_1-C_+)
\end{split}
\eeq
where we used~\eqref{eq:defbetac} and Lemma \ref{supercritical gamma} for the last line.
We show this approximation rigorously and also estimate the derivatives of $G(\gamma)$ in the next Lemma.

\begin{lem} \label{G gamma bound}
Assume Conditions \ref{cond:regular} and \ref{cond:rigidity}.  
Let $\beta > \beta_c$. 
Let $\gamma$ be the solution of the equation $G'(\gamma) = 0$ in Lemma \ref{supercritical gamma}. Then 
for $0<\epsilon<\frac14$
\beq
	G(\gamma) = 2\beta \lambda_1 - \int_{C_-}^{C_+} \log (C_+-z)d\nu(z) %f_0(\beta_c) 
	- 2\beta_c (\lambda_1 - C_+) + O(N^{-1+4\epsilon})
\eeq
with high probability. 
Moreover, for $0<\epsilon<\frac14$, there is a constant $C_0>0$ such that 
\beq
	N^{\ell -1 - 4\ell \epsilon} \leq  \frac{(-1)^{\ell}}{(\ell-1)!} G^{(\ell)}(\gamma) \le C_0^\ell N^{\ell -1 +3\epsilon}
\eeq
for all $\ell = 2, 3, \cdots$
with high probability. Here, $C_0$ does not depend on $\ell$. 
\end{lem}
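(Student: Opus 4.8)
The plan is to carry out both assertions on the single high-probability event on which the rigidity estimate~\eqref{rigidity} and the conclusion $\frac{1}{3\beta N}\le\gamma-\lambda_1\le N^{4\epsilon-1}$ of Lemma~\ref{supercritical gamma} hold; on this event every bound below is deterministic and, importantly, holds simultaneously for all $\ell$. For the expansion of $G(\gamma)=2\beta\gamma-\frac1N\sum_i\log(\gamma-\lambda_i)$, first write $\gamma=\lambda_1+(\gamma-\lambda_1)$ with $\gamma-\lambda_1=O(N^{-1+4\epsilon})$, so that $2\beta\gamma=2\beta\lambda_1+O(N^{-1+4\epsilon})$. Then split $\frac1N\sum_i\log(\gamma-\lambda_i)$ into the edge block $i\le N^{3\epsilon}$ and the bulk block $i>N^{3\epsilon}$. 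On the edge block one has $\frac1{3\beta N}\le\gamma-\lambda_i\le\gamma-\lambda_N=O(1)$, so each summand is $O(\log N)$ and the block contributes $O(N^{-1+4\epsilon})$; similarly, by the square-root edge behaviour~\eqref{eq:sqrtb} and the estimate~\eqref{eq:classicalfromC}, the matching integral tails $\int_{\widehat\gamma_{N^{3\epsilon}}}^{C_+}\log(C_+-z)\,\dd\nu(z)$ and $(\lambda_1-C_+)\int_{\widehat\gamma_{N^{3\epsilon}}}^{C_+}\frac{\dd\nu(z)}{C_+-z}$ are of sizes $O(N^{-1+3\epsilon}\log N)$ and $O(N^{-1+2\epsilon})$, hence also negligible.

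On the bulk block I would use $\log(\gamma-\lambda_i)=\log(C_+-\lambda_i)+\log\!\big(1+\tfrac{\gamma-C_+}{C_+-\lambda_i}\big)$, which is legitimate since for $i>N^{3\epsilon}$ the rigidity estimate and~\eqref{eq:classicalfromC} give $C_+-\lambda_i\ge c\,i^{2/3}N^{-2/3}$, dominating $|\gamma-C_+|=O(N^{-2/3+\epsilon})$; expanding the logarithm as $\tfrac{\gamma-C_+}{C_+-\lambda_i}+O\!\big(\tfrac{(\gamma-C_+)^2}{(C_+-\lambda_i)^2}\big)$ and summing the remainder against $\frac1N\sum_{i>N^{3\epsilon}}(C_+-\lambda_i)^{-2}=O(N^{1/3-\epsilon})$ gives an error $O(N^{-1+\epsilon})$. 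For the two leading pieces I would invoke the rigidity-plus-monotone-Riemann-sum comparison already used in Lemma~\ref{subcritical gamma0} and in~\eqref{sum_integral 2}: namely $\frac1N\sum_{i>N^{3\epsilon}}\log(C_+-\lambda_i)=\int_{C_-}^{C_+}\log(C_+-z)\,\dd\nu(z)+O(N^{-1+3\epsilon})$ and $\frac1N\sum_{i>N^{3\epsilon}}\frac{1}{C_+-\lambda_i}=2\beta_c+O(N^{-1/3+\epsilon})$, the constant $2\beta_c$ entering via Definition~\ref{def:beta_c}. Multiplying the second relation by $\gamma-C_+=(\lambda_1-C_+)+O(N^{-1+4\epsilon})$ reproduces exactly $2\beta_c(\lambda_1-C_+)$ up to $O(N^{-1+4\epsilon})$, and assembling all the pieces (absorbing the stray $\log N$ factors by a negligible enlargement of $\epsilon$) yields the asserted formula for $G(\gamma)$.

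For the derivative bounds, note that for $\ell\ge 2$ the linear term $2\beta z$ disappears and $\frac{(-1)^\ell}{(\ell-1)!}G^{(\ell)}(\gamma)=\frac1N\sum_i(\gamma-\lambda_i)^{-\ell}$, a sum of positive terms. The lower bound is immediate from the term $i=1$: since $\gamma-\lambda_1\le N^{4\epsilon-1}$, one has $\frac1N(\gamma-\lambda_1)^{-\ell}\ge\frac1N(N^{1-4\epsilon})^{\ell}=N^{\ell-1-4\ell\epsilon}$. For the upper bound the key observation is that $\lambda_1$ is the largest eigenvalue, so $\gamma-\lambda_1=\min_i(\gamma-\lambda_i)$ and hence, termwise, $(\gamma-\lambda_i)^{-\ell}\le(\gamma-\lambda_1)^{-(\ell-2)}(\gamma-\lambda_i)^{-2}\le(3\beta N)^{\ell-2}(\gamma-\lambda_i)^{-2}$; summing gives $\frac1N\sum_i(\gamma-\lambda_i)^{-\ell}\le(3\beta N)^{\ell-2}G''(\gamma)$, so it suffices to treat $\ell=2$. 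For that I would again split at $i=N^{3\epsilon}$, bounding the edge block by $N^{3\epsilon}(3\beta N)^2/N=O(N^{1+3\epsilon})$ and the bulk block by $\frac1N\sum_{i>N^{3\epsilon}}(c\,i^{2/3}N^{-2/3})^{-2}=O(N^{1/3-\epsilon})$, which yields $G''(\gamma)\le (C_0')^2N^{1+3\epsilon}$ for a suitable constant $C_0'$. Taking $C_0=\max(3\beta,C_0')$ then gives $\frac1N\sum_i(\gamma-\lambda_i)^{-\ell}\le(3\beta)^{\ell-2}(C_0')^2N^{\ell-1+3\epsilon}\le C_0^{\ell}N^{\ell-1+3\epsilon}$, uniformly in $\ell$.

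The main obstacle is the edge analysis in the first assertion: because $\lambda_1$ may exceed $C_+$, one cannot simply write $\log(C_+-\lambda_i)$ for the top eigenvalues, and both $\int\log(C_+-z)\,\dd\nu$ and $\int(C_+-z)^{-1}\,\dd\nu$ have integrable but singular integrands at $C_+$; the $i\le N^{3\epsilon}$ / $i>N^{3\epsilon}$ split is precisely what decouples these two difficulties, treating the top block and its matching integral tails by crude bounds and the bulk by rigidity and Riemann-sum comparison. One must also track where the rigidity corrections are worst — near the \emph{lower} edge $\hat k^{-1/3}$ is of order one, so that part of the sum requires the $\sum_j j^{-1/3}=O(N^{2/3})$ estimate to stay within the advertised $O(N^{-1+4\epsilon})$ — and check that none of the intermediate errors exceeds $O(N^{-1+4\epsilon})$.
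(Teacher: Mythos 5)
Your argument is correct and follows the same overall scaffolding as the paper — split off the top $N^{3\epsilon}$ eigenvalues, Taylor expand the logarithm using $\gamma-C_+=O(N^{-2/3+\epsilon})$, and then convert the bulk sum to an integral via rigidity and a monotone Riemann-sum comparison — but it departs from the paper's proof at two points, both of which are sound. First, in the bulk you expand around $\log(C_+-\lambda_i)$ and carry one error term $\tfrac{\gamma-C_+}{C_+-\lambda_i}$, whereas the paper expands around $\log(C_+-\gamma_i)$ (the classical location) and carries \emph{two} error terms, $\tfrac{\gamma-C_+}{C_+-\gamma_i}$ and $\tfrac{\gamma_i-\lambda_i}{C_+-\gamma_i}$, inside a single expansion; the cost of your version is that the Riemann-sum steps $\tfrac1N\sum\log(C_+-\lambda_i)\to\int\log(C_+-z)\,\dd\nu$ and $\tfrac1N\sum(C_+-\lambda_i)^{-1}\to 2\beta_c$ must themselves absorb the rigidity error, which you correctly flag and which gives the same overall $O(N^{-1+4\epsilon})$ once the $\log N$ factors are absorbed. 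Second, your treatment of the derivative bounds is a genuine (small) simplification over the paper: the paper estimates $\tfrac1N\sum(\gamma-\lambda_i)^{-\ell}$ directly for each $\ell$, splitting at $i=N^{3\epsilon}$ and using $(C_+-\gamma_i)^{-\ell}\lesssim(i^{2/3}N^{-2/3})^{-\ell}$ in the bulk, while you observe that $\gamma-\lambda_1=\min_i(\gamma-\lambda_i)\ge\tfrac1{3\beta N}$ lets you peel off $(\gamma-\lambda_i)^{-(\ell-2)}\le(3\beta N)^{\ell-2}$ termwise and reduce the whole family of bounds to the single case $\ell=2$; this cleanly explains why the constant $C_0$ can be taken uniform in $\ell$. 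Both variants are correct and give the stated powers of $N$.
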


\begin{proof}
We may assume that the eigenvalues $\lambda_i$'s satisfy the rigidity Condition \ref{cond:rigidity} and Lemma~\ref{supercritical gamma} since these event occurs with high probability. 

Since we have from Lemma \ref{supercritical gamma} that $\gamma = \lambda_1 + O(N^{-1+4\epsilon})$, the first part of the lemma is proved if we show that 
\begin{equation}\label{eq:firstpartof}
\left| \frac{1}{N} \sum_{i=1}^N \log (\gamma - \lambda_i) - \int_{C_-}^{C_+} \log (C_+ - k) \dd \nu(k) - 2\beta_c (\gamma - C_+) \right| = O(N^{-1+4\epsilon}).
\end{equation}
Mimicking the proof of Lemma \ref{supercritical gamma}, we first consider $1 \leq i \leq N^{3\epsilon}$.
Since $\gamma-\lambda_1\ge \frac1{3\beta N}$ with high probability by Lemma~\ref{supercritical gamma},  we have the trivial estimate
\beq \label{log estimate case 1}
\left| \frac{1}{N} \sum_{i=1}^{N^{3\epsilon}} \log (\gamma - \lambda_i) \right| \leq \frac{C N^{3\epsilon}}{N} \log N.
\eeq
Similarly, 
\beq \label{log estimate case 2}
\left| \frac{1}{N} \sum_{i=N-N^{3\epsilon}+1}^{N} \log (\gamma - \lambda_i) \right| \leq \frac{C N^{3\epsilon}}{N}.
\eeq

We now consider the case $N^{3\epsilon} < i < N-N^{3\epsilon}$. 
Note that
\beq \label{eq:log gamma in G}
\begin{split}
	\log (\gamma - \lambda_i) - \log (C_+ -\gamma_i) 
	&= \log\left( 1 + \frac{\gamma-C_+}{C_+-\gamma_i} + \frac{\gamma_i-\lambda_i}{C_+-\gamma_i}\right) \\
	&= \frac{\gamma - C_+}{C_+ -\gamma_i} + O \left( \left( \frac{\gamma - C_+}{C_+ -\gamma_i} \right)^2 \right) + O \left( \frac{\lambda_i - \gamma_i}{C_+ -\gamma_i} \right).
\end{split}
\eeq
The first error term can be estimated by
\beq
	\left( \frac{\gamma - C_+}{C_+ -\gamma_i} \right)^2 \leq C \frac{N^{-4/3 + 2\epsilon}}{i^{4/3} N^{-4/3}} = C N^{2\epsilon} i^{-4/3}
\eeq
from~\eqref{eq:classicalfromC} and $\gamma-C_+=(\gamma-\lambda_1)+(\lambda_1-C_+)=O(N^{-2/3 + \epsilon})$ due to Lemma~\ref{supercritical gamma}. 
For the second error term in \eqref{eq:log gamma in G}, we consider two different cases. For $N^{3\epsilon} < i \leq N/2$, we use the estimate
\beq
\frac{|\lambda_i - \gamma_i|}{C_+ -\gamma_i} \leq C \frac{i^{-1/3} N^{-2/3+\epsilon}}{i^{2/3} N^{-2/3}} = C N^{\epsilon} i^{-1}
\eeq
from~\eqref{eq:classicallocationdef} and~\eqref{eq:classicalfromC}. 
For $N/2 < i < N-N^{3\epsilon}$, we simply use
\beq
\frac{|\lambda_i - \gamma_i|}{C_+ -\gamma_i} \leq C N^{-1 + \epsilon}.
\eeq
Since 
\beq
	\frac{1}{N} \sum_{i=N^{3\epsilon}+1}^{N-N^{3\epsilon}}  N^{2\epsilon} i^{-4/3} 
	\leq CN^{-1+\epsilon}, 
	\qquad
	\frac{1}{N} \sum_{i=N^{3\epsilon}+1}^{N/2}  N^{\epsilon} i^{-1}
	\leq CN^{-1+\epsilon} \log N, 
\eeq
we obtain, after summing~\eqref{eq:log gamma in G} over $i$,  that
\beq \label{log estimate case 3}
\left| \frac{1}{N} \sum_{i=N^{3\epsilon}+1}^{N-N^{3\epsilon}} \log (\gamma - \lambda_i) - \frac{1}{N} \sum_{i=N^{3\epsilon}+1}^{N-N^{3\epsilon}} \log (C_+ - \gamma_i) - \frac{\gamma - C_+}{N} \sum_{i=N^{3\epsilon}+1}^{N-N^{3\epsilon}} \frac{1}{C_+ -\gamma_i} \right| \leq \frac{C N^{\epsilon} \log N}{N}.
\eeq

From \eqref{log estimate case 1}, \eqref{log estimate case 2}, and \eqref{log estimate case 3}, we conclude that
\beq \label{log estimate case 4}
\left| \frac{1}{N} \sum_{i=1}^N \log (\gamma - \lambda_i) - \frac{1}{N} \sum_{i=N^{3\epsilon}+1}^{N-N^{3\epsilon}} \log (C_+ - \gamma_i) - \frac{\gamma - C_+}{N} \sum_{i=N^{3\epsilon}+1}^{N-N^{3\epsilon}} \frac{1}{C_+ -\gamma_i} \right| = O(N^{-1+4\epsilon}).
\eeq
We proved in~\eqref{sum_integral 2} that
\beq
\left| \frac{1}{N} \sum_{i=N^{3\epsilon}+1}^{N-N^{3\epsilon}} \frac{1}{C_+ - \gamma_i} -2\beta_c  \right| = O(N^{-1/3 + \epsilon}).
\eeq
On the other hand, following the arguments in \eqref{sum_integral 1} and \eqref{sum_integral 2}, we obtain 
\beq \begin{split}
&\left| \frac{1}{N} \sum_{i=N^{3\epsilon}+1}^{N-N^{3\epsilon}} \log (C_+ -\gamma_i) - \int_{C_-}^{C_+} \log (C_+ - k) \dd \nu(k) \right|\\
&\leq C \left( \int_{C_-}^{\widehat\gamma_{N-N^{3\epsilon} -2}} \log (C_+ - k) \dd \nu(k) + \int_{\widehat\gamma_{N^{3\epsilon} +2}}^{C_+} \log (C_+ - k) \dd \nu(k) \right) \\
&\leq C N^{-1+3\epsilon} + C \int_0^{N^{-2/3+2\epsilon}} \sqrt\kappa \log \kappa\, \dd \kappa \\
& \leq C N^{-1+3\epsilon} \log N.
\end{split} \eeq
Inserting these estimates into~\eqref{log estimate case 4}, we obtain~\eqref{eq:firstpartof}. 
Hence the first part of the lemma is proved.

We now prove the second part of the lemma. We have
\beq
	G^{(\ell)}(\gamma) = \frac{(-1)^{\ell} (\ell-1)!}{N} \sum_{i=1}^N \frac1{(\gamma-\lambda_i)^{\ell}}
\eeq
for $\ell=2,3,\cdots$. 
For the lower bound, we use Lemma \ref{supercritical gamma} to obtain
\beq
\frac{1}{N} \sum_{i=1}^N \frac{1}{(\gamma - \lambda_i)^{\ell}} \geq \frac{1}{N} \frac{1}{(\gamma - \lambda_1)^{\ell}} \geq N^{\ell-1 -4\ell\epsilon}.
\eeq
For the upper bound, we use 
\beq \label{eq:Lemma sup two10}
\frac{1}{N} \sum_{i=1}^{N^{3\epsilon}} \frac{1}{(\gamma - \lambda_i)^{\ell}} \leq N^{-1+3\epsilon} \frac{1}{(\gamma - \lambda_1)^\ell} \leq (3\beta)^\ell N^{\ell-1+3\epsilon}
\eeq
and
\beq
\frac{1}{N} \sum_{i=N^{3\epsilon}+1}^N \frac{1}{(\gamma - \lambda_i)^\ell} \leq \frac{1}{N} \sum_{i=N^{3\epsilon}+1}^N \frac{2^\ell}{(C_+ - \gamma_i)^\ell} \leq \frac{2^{\ell}}{N} \sum_{i=N^{3\epsilon}+1}^N \frac{1}{(i^{2/3} N^{-2/3})^\ell} \leq 3\cdot 2^{\ell} N^{(\frac23\ell-1)(1-3 \epsilon)}.
\eeq
This completes the proof of the lemma.

\end{proof}

In the supercritical case, we have the following lemma corresponding to Lemma \ref{steepest descent sub} in the subcritical case.

\begin{lem} \label{steepest descent sup}
Assume Conditions \ref{cond:regular} and \ref{cond:rigidity}.  
Let $\beta > \beta_c$. 
Let $\gamma$ be the unique solution to the equation $G'(\gamma) = 0$ defined in Lemma \ref{defofgamma}. Then, there exists $K \equiv K(N)$ satisfying $N^{-C} < K < C$ for some constant $C > 0$ such that
\beq \label{eq:K}
	\int_{\gamma - \ii \infty}^{\gamma + \ii \infty} e^{\frac{N}{2} G(z)} \dd z = \ii e^{\frac{N}{2} G(\gamma)} K
\eeq
with high probability. 
\end{lem}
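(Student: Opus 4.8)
\emph{Proof idea.} I would keep the vertical line $\gamma+\ii\R$ as the contour (no deformation) and read off the size of the resulting oscillatory integral by a crude steepest‑descent analysis anchored at the saddle $\gamma$. Parametrizing $z=\gamma+\ii y$ and using that $G(\gamma)\in\R$ (since $\gamma>\lambda_1$), the left side of~\eqref{eq:K} equals $\ii e^{\frac N2 G(\gamma)}K$ with
\[
	K=\int_{-\infty}^{\infty}e^{\frac N2\left(G(\gamma+\ii y)-G(\gamma)\right)}\dd y .
\]
Because $G(\bar z)=\overline{G(z)}$, the integrand at $-y$ is the conjugate of the one at $y$, so $K$ is real; and since $K=Z_N/(\ii C_N e^{\frac N2 G(\gamma)})$ with $Z_N>0$, $\ii C_N>0$, $e^{\frac N2 G(\gamma)}>0$ (equivalently, $e^{\frac N2 G(\gamma)}K$ is a positive multiple of the sphere integral $I(N\beta)=\int_{S^{N-1}}e^{N\beta\sum\lambda_i x_i^2}\dd\Omega$ appearing in the proof of Lemma~\ref{inverse laplace}), we get $K>0$. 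So the task is to bound the positive real $K$ above by a constant and below by a negative power of $N$. Throughout write $a_i=\gamma-\lambda_i>0$, so that $\bigl|e^{\frac N2(G(\gamma+\ii y)-G(\gamma))}\bigr|=P(y):=\prod_i(1+y^2/a_i^2)^{-1/4}$, and work on the high‑probability event on which Condition~\ref{cond:rigidity}, Lemma~\ref{supercritical gamma} and Lemma~\ref{G gamma bound} all hold.

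\emph{Upper bound.} Here $K\le\int P(y)\dd y$. On $|y|\le 1$ the integrand is $\le 1$; for $|y|>1$, rigidity together with $a_1=\gamma-\lambda_1\le N^{-1+4\epsilon}$ (Lemma~\ref{supercritical gamma}) gives $a_i\le C$ for all $i$, hence $P(y)\le(1+y^2/C^2)^{-N/4}$, whose integral over $|y|>1$ is $O(1)$. Thus $K<C$.

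\emph{Lower bound — the main point.} For a fixed small $\delta>0$ and $|y|>\delta$, the bulk eigenvalues (a positive fraction of all $i$, with $a_i$ bounded) force $(1+\delta^2/a_i^2)^{-1/4}\le(1+c_0)^{-1/4}<1$ for those indices, so $P(y)\le e^{-cN}(1+y^2/C^2)^{-1}$ and $\int_{|y|>\delta}P(y)\dd y=O(e^{-cN})$; hence $K=\int_{|y|\le\delta}e^{\frac N2(G(\gamma+\ii y)-G(\gamma))}\dd y+O(e^{-cN})$. On $|y|\le\delta$ I would run Laplace's method at $\gamma$. Set $\sigma:=(NG''(\gamma))^{-1/2}$; by the $\ell=2$ bounds in Lemma~\ref{G gamma bound}, $cN^{-1-3\epsilon/2}\le\sigma\le N^{-1+4\epsilon}$ and $\sigma\le a_1$. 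On $|y|\le\sigma/2$ — strictly inside the radius of convergence $a_1$ of the Taylor series of $G$ about $\gamma$ — one has $\tfrac N2(G(\gamma+\ii y)-G(\gamma))=-y^2/(4\sigma^2)+R(y)$, and the estimates $|G^{(\ell)}(\gamma)|\le(\ell-1)!\,a_1^{-(\ell-2)}G''(\gamma)$ (from $\sum_i a_i^{-\ell}\le a_1^{-(\ell-2)}\sum_i a_i^{-2}$, exactly as in the proof of Lemma~\ref{G gamma bound}) give $|R(y)|\le|y|^3/(3\sigma^2 a_1)\le\tfrac1{24}$ there; consequently $\re\,e^{\frac N2(G(\gamma+\ii y)-G(\gamma))}\ge c>0$ and $\int_{|y|\le\sigma/2}\re\,e^{\frac N2(\cdots)}\dd y\ge c\sigma$. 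For $\sigma/2<|y|\le\delta$ I would integrate by parts in $y$, writing $e^{\frac N2(G(\gamma+\ii y)-G(\gamma))}=P(y)e^{\ii\Phi(y)}$ with $\Phi(y)=N\beta y-\tfrac12\sum_i\arctan(y/a_i)$; using $G'(\gamma)=0$ one checks $\Phi$ is odd, strictly increasing and convex on $(0,\infty)$ with $\Phi(0)=\Phi'(0)=0$ and $\Phi'(y)=\tfrac{y^2}2\sum_i\bigl(a_i(a_i^2+y^2)\bigr)^{-1}$, and for $y\ge\sigma/2$, Lemma~\ref{G gamma bound} and Lemma~\ref{supercritical gamma} give $\Phi'(y)\ge c\,\sigma^2/a_1^3\ge cN^{1-15\epsilon}$. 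Since $P$ is decreasing, one integration by parts bounds $\bigl|\int_{\sigma/2<|y|\le\delta}Pe^{\ii\Phi}\dd y\bigr|$ by a constant times $1/\Phi'(\sigma/2)$ plus exponentially small terms.

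\emph{Where the difficulty lies.} Carried out naively this yields $K\ge c\sigma-O\!\bigl(1/\Phi'(\sigma/2)\bigr)-O(e^{-cN})$, but the saddle scale $\sigma$ and the oscillatory‑tail bound $1/\Phi'(\sigma/2)$ are \emph{both} of order $N^{-1}$ up to $N^{O(\epsilon)}$ factors, so a triangle inequality does not by itself give $K>0$ — this is the step I expect to be the main obstacle. The fix is to show the oscillatory tail is genuinely of smaller order than $\sigma$: either iterate the integration by parts, or (in the spirit of steepest descent) retain the leading boundary term $P(\sigma/2)e^{\ii\Phi(\sigma/2)}/(\ii\Phi'(\sigma/2))$ rather than discarding it, identify it as a tail of the Gaussian, and cancel it against the corresponding part of $\int_{|y|\le\sigma/2}$, so that all errors become $o(\sigma)$. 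One then gets $K\ge\tfrac12 c\sigma$, and since $\sigma\ge cN^{-1-3\epsilon/2}$ this is $\ge N^{-2}$ for $N$ large; combined with $K<C$ this is~\eqref{eq:K} for a suitable constant $C$. The remaining ingredients — realness and positivity of $K$, the exponential cutoff at $|y|=\delta$, the modulus estimate for the upper bound, and the Taylor remainder bound on the small window — are routine given Lemmas~\ref{inverse laplace}, \ref{supercritical gamma} and~\ref{G gamma bound}.
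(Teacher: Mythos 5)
Your set-up is sound through the reduction to a real positive $K=\int_{-\infty}^\infty e^{\frac N2(G(\gamma+\ii y)-G(\gamma))}\,\dd y$ and the upper bound $K\le C$, and you correctly compute $\Phi'(y)=\tfrac{y^2}2\sum_i(a_i(a_i^2+y^2))^{-1}$ and the inner-window estimate $\int_{|y|\le\sigma/2}\re\,e^{\frac N2(G-G(\gamma))}\,\dd y\ge c\sigma$. But you have also correctly put your finger on the place where this route breaks down, and you leave that place open: the oscillatory tail over $\sigma/2<|y|\le\delta$ cannot be disposed of by a single integration by parts, because $1/\Phi'(\sigma/2)\asymp a_1^3/\sigma^2$ and $\sigma$ are both $N^{-1+O(\epsilon)}$ with no separation of scales (here $a_1=\gamma-\lambda_1$ and $\sigma$ are simultaneously $N^{-1}$ up to $N^{\epsilon}$ factors, precisely because $\gamma$ sits a distance $\sim N^{-1}$ from the branch point $\lambda_1$). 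The proposed repairs --- iterating the integration by parts, or carrying the boundary term and cancelling it against the Gaussian tail --- are not executed, and neither is obviously viable: iterating produces derivatives of $P/\Phi'$ that do not gain extra powers of $N$ because $\Phi$ has a cubic zero at $y=0$ with $\Phi'''(0)=\sum_i a_i^{-3}$ comparable to $a_1^{-3}$, and the cancellation would require tracking the phase $\Phi(\sigma/2)$ to relative accuracy $o(1)$ on a window where the cubic correction $\tfrac N2 G'''(\gamma)(\sigma/2)^3$ is already $O(1)$. As you wrote it, the argument does not close, and a straightforward patch is not in sight.

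The paper takes a structurally different route that sidesteps the oscillation entirely. It deforms the contour from the vertical line to the curve of steepest descent $\Gamma=\{\,\im G(z)=0\,\}$ through $\gamma$; on $\Gamma^+=\Gamma\cap\mathbb C^+$ the function $G$ is real and strictly decreasing away from $\gamma$, so the integrand $e^{\frac N2(G(z)-G(\gamma))}$ is real, positive, and monotone in the curve parameter --- there is nothing to cancel. The price is that $\Gamma^+$ need not be a graph over the imaginary axis, so the area element $\dd y$ may change sign; the paper handles this with an elementary monotonicity lemma (for any decreasing $f$ on $\Gamma^+$, $\int_{\Gamma^+}e^{f}\,\dd y\ge0$), applied to a two-radius truncation scheme with balls $B_{N^{-2}}$ and $B_{N^{-3}}$ around $\gamma$ together with the bounds $G(\gamma)-G(z_2)\ge\frac18N^{-3-8\epsilon}$ and $G(\gamma)-G(z_3)\le 2C_0^2N^{-5+3\epsilon}$ to produce the lower bound $K\ge cN^{-5-8\epsilon}$. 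If you want to salvage your contour, you would need to replace the stationary-phase heuristic by a genuine argument that survives the absence of scale separation near the branch point; the cleanest way to do that is exactly the contour deformation, after which the positivity and monotonicity of the integrand along $\Gamma^+$ do all the work.
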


\begin{proof}
Define $K$ by the relation~\eqref{eq:K}:
\beq
	K:= -\ii e^{-\frac{N}{2} G(\gamma)} \int_{\gamma - \ii \infty}^{\gamma + \ii \infty} e^{\frac{N}{2} G(z)} \dd z.
\eeq
Then $K$ is real and positive since $Z > 0$ in \eqref{integral representation} and $G(\gamma)$ is real.  
We now estimate $K$. 

Fix $0<\epsilon<\frac1{26}$. 
Since $\epsilon<\frac1{4}$,  
we may assume that, as in the previous two lemmas, the eigenvalues $\lambda_i$'s satisfy the rigidity Condition \ref{cond:rigidity}, Lemma~\ref{supercritical gamma} and Lemma~\ref{G gamma bound} since these event occurs with high probability. 

To prove the upper bound for $K$, we consider
\beq \begin{split}
K &= -\ii \int_{\gamma - \ii \infty}^{\gamma + \ii \infty} \exp \left( \frac{N}{2} \left[ G(z) - G(\gamma) \right] \right) \dd z = \int_{-\infty}^{\infty} \exp \left( \frac{N}{2} \left[ G(\gamma + \ii t) - G(\gamma) \right] \right) \dd t \\
&= \int_{-\infty}^{\infty} \exp \left( \ii N \beta t - \frac{1}{2} \sum_{j=1}^N \log \left( 1 + \frac{\ii t}{\gamma - \lambda_j} \right) \right) \dd t \\
&\leq \int_{-\infty}^{\infty} \exp \left( - \frac{1}{4} \sum_{j=1}^N \log \left( 1 + \frac{t^2}{(\gamma - \lambda_j)^2} \right)\right) \dd t
\end{split} \eeq
where the last step was obtained by taking the absolute value of the integrand. 
Since $\gamma-\lambda_i\le \gamma-\lambda_N \le C$ for some $C$ (with high probability), we find that 
\beq \begin{split}
	K&\leq \int_{-\infty}^{\infty} \exp \left( - \frac{N}{4} \log \left( 1 + C t^2 \right) \right) \dd t = \int_{-\infty}^{\infty} (1 + C t^2)^{-N/4} \dd t \leq C
\end{split} \eeq
for some $C > 0$, if $N$ is sufficiently large. This proves the upper bound for $K$.

In order to prove the lower bound for $K$, we consider the curve of steepest-descent that passes through the point $\gamma$ in the complex plane. 
This curve, denoted by $\Gamma$, satisfies $\im G(z) = 0$. (Note that the real axis also satisfies $\im G(z) = 0$.)  
It is easy to check, using the formula~\eqref{eq:Gdefn} of $G(z)$, that 

(i) $\Gamma\cap \C_+$ is a $C^1$ curve,

(ii) $\Gamma$ intersects the real axis only at $\gamma$,

(iii) the tangent line of $\Gamma$ at $\gamma$ is parallel to the imaginary axis, and 

(iv) the real axis is the asymptote in the $-\infty$ direction. 

Since all solutions of the equation $G'(z) = 0$ are on the real axis, the function $G(z)$ is decreasing along the curve $\Gamma \cap \C^+$ as $z$ moves from the point $\gamma$ to the point $-\infty$. 

We first have 
\beq \label{contour deform}
	\int_{\gamma - \ii \infty}^{\gamma + \ii \infty} e^{\frac{N}{2} G(z)} \dd z
	= \int_\Gamma  e^{\frac{N}{2} G(z)}  \dd z.
\eeq
This follows by noting that 
$\re G(z) \leq 2\beta \gamma - \log (R/2)$, for $|z|=R$ such that $\re z\le \gamma$, and hence, if we let $C_R$ be the circular arc $|z|=R$ such that $\re z\le \gamma$, 
then 
\beq 
	\left|  \int_{C_R}  e^{\frac{N}{2} G(z)}  \dd z \right|
	\le  \frac{C_N}{R^{N/4-1}} \to 0 
\eeq
as $R\to\infty$ where $C_N$ is a constant depending on $N$. 
From~\eqref{contour deform}, we have
\beq \label{eq:Knew form}
\begin{split}
	K &= -\ii \int_{\Gamma} \exp \left( \frac{N}{2} \left[ G(z) - G(\gamma) \right] \right) \dd z . 
\end{split} \eeq

Let
\beq
\Gamma^+ = \Gamma \cap \C^+, \qquad \Gamma^- = \Gamma \cap \C^-.
\eeq
If substitute $\dd z = \dd x + \ii \dd y$ in~\eqref{eq:Knew form}, we see that the integral with respect to $\dd x$ vanishes since $K>0$. (More precisely, the contribution from $\exp [(N/2) (G(z) - G(\gamma)] (-\dd x)$ on $\Gamma^+$ is cancelled by the contribution from $\exp [(N/2) (G(z) - G(\gamma)] \dd x$ on $\Gamma^-$.) 
We thus find, using that $\overline{G(\bar{z}})=G(z)$, that 
\beq \label{eq:define K}
K = 2 \int_{\Gamma^+} \exp \left( \frac{N}{2} \left[ G(z) - G(\gamma) \right] \right) \dd y.
\eeq

Now let $B_{N^{-2}}$ be the ball of radius $N^{-2}$ centered at $\gamma$. 
Since $G(z)$ is analytic for $z$ such that $\re z>\lambda_1$, $G(z)$ is analytic in $\overline{B}_{N^{-2}}$ from Lemma~\ref{supercritical gamma}. 
Hence for $z \in \Gamma^+ \cap B_{N^{-2}}$,
\beq \label{eq:G Taylor}
	G(z) - G(\gamma) = \sum_{j=2}^{\infty} \frac{G^{(j)}(\gamma)}{j!} (z-\gamma)^j.
\eeq
Due to the second part of Lemma  \ref{G gamma bound}, this power series converges uniformly in $\overline{B}_{N^{-2}}$ for all $N>C_0$ where $C_0$ is the constant in Lemma  \ref{G gamma bound}.  
Recall that $G^{(j)}(\gamma)$ is a real number for all $j$; more precisely, $(-1)^j G^{(j)}(\gamma)>0$. 
Comparing the imaginary parts of the both sides of~\eqref{eq:G Taylor}, we find that
for $z\in \Gamma_+\cap \overline{B}_{N^{-2}}$, 
\beq \label{im on Gamma}
0 = G''(\gamma)  \re (z-\gamma) \im z + \frac{G'''(\gamma)}{2} \big( \re (z-\gamma) \big)^2 \im z - \frac{G'''(\gamma)}{6} (\im z)^3 + \widetilde\Omega
\eeq
with
\beq
	\widetilde\Omega = \sum_{j=4}^\infty \frac{G^{(j)}(\gamma)}{j!} \im \left( (z-\gamma)^j \right).
\eeq
Note that $\im \left( (z-\gamma)^j \right)$ divided by $\im (z-\gamma)$ is a polynomial in $\re (z-\gamma)$ and $\im z$. 
Dividing~\eqref{im on Gamma} by $\im z$ and $G''(\gamma)$, we find that 
$z=(X+\gamma) + \ii Y \in \Gamma_+\cap \overline{B}_{N^{-2}}$ solves the equation 
\beq \label{im on Gamma2}
	F(X, Y)=0, 
	\qquad 
	F(X, Y) :=  X - \frac{\alpha}2 X^2 + \frac{\alpha}6 Y^2 + \Omega(X,Y) 
\eeq
where
\beq
	\alpha:= - \frac{G'''(\gamma)}{G''(\gamma)} >0, 
\eeq
and 
\beq
	\Omega(X,Y) = \sum_{j=4}^\infty \frac{G^{(j)}(\gamma)}{j! G''(\gamma)} \frac{\im \left( (X+\ii Y)^j \right) }{Y}.
\eeq
Using the general inequality
\beq \label{eq:xplusyjima}
	|\im \left( (a+\ii b)^j\right)| \leq  j|a+\ii b|^{j-1} |b|
\eeq
for real numbers $a,b$, for $j\ge 1$, which can be checked easily by an induction and the trivial bound $|\re \left( (a+\ii b)^j\right)|\le |a+\ii b|^j$, 
and using the second part of Lemma~\ref{G gamma bound}, we find that 
\beq
\begin{split}
	|\Omega| 
	\leq \sum_{j=4}^\infty \frac{C_0^j N^{j-1+3\epsilon}}{N^{1-8\epsilon}} |X+\ii Y|^{j-1} 
	\leq 2C_0^4 N^{11\epsilon} \left( X^2+Y^2 \right) %\left[ \big(\re (z-\gamma) \big)^2 + (\im z)^2 \right] 
\end{split}
\eeq
for $z =(X+\gamma)+\ii Y\in \Gamma^+ \cap \overline{B}_{N^{-2}}$, for all $N> 2 C_0$.
From the second part of Lemma~\ref{G gamma bound}, 
\beq \label{eq:alphabd}
	\frac1{C_0}N^{1-15\epsilon}\le \alpha\le C_0^3N^{1+11\epsilon}, 
\eeq
and since $\epsilon<\frac1{26}$, we have 
\beq
	N^{11\epsilon} X^2 \ll \alpha X^2 \ll X, \qquad N^{11\epsilon} Y^2\ll \alpha Y^2
\eeq
uniformly for $z =(X+\gamma)+\ii Y\in \overline{B}_{N^{-2}}$
where the notation $a_N\ll b_N$ for sequences $a_N$ and $b_N$ means that $\frac{a_N}{b_N}\to 0$ as $N\to \infty$. 
Hence equation~\eqref{im on Gamma2} becomes
\beq
	X(1+o(1)) + \frac{\alpha}6 Y^2 (1+o(1))=0,
\eeq
and hence
\beq \label{eq: Gammaplus parabola}
	X = -\frac{\alpha}6  Y^2(1+ o(1)), 
	\qquad \alpha= - \frac{G'''(\gamma)}{G''(\gamma)} >0,
\eeq
uniformly for $z =(X+\gamma)+\ii Y\in \Gamma^+ \cap \overline{B}_{N^{-2}}$.
This shows, in particular, that $\re z < \gamma$ for $z\in \Gamma^+ \cap \overline{B}_{N^{-2}}$.
Moreover, it is direct to check, by proceeding as above, that 
\beq
	\partial_X F= 1 + o(1), 
	\qquad
	\partial_Y F = \frac{\alpha}3 Y (1 + o(1)), 	
\eeq
uniformly for $z =(X+\gamma)+\ii Y\in \overline{B}_{N^{-2}}$. 
(Here, for the estimate of $\partial_Y \Omega$, we use the inequality  
\beq
	|\re \left( (a+\ii b)^j - a^j\right)| \leq  \frac{j(j-1)}2 |a+\ii b|^{j-2} |b|^2
\eeq
for real numbers $a,b$, for $j\ge 2$, which can be checked easily by an induction and the bound~\eqref{eq:xplusyjima}.) 
Therefore, $\Gamma_+\cap \overline{B}_{N^{-2}}$ is a graph, and 
 $\dd y=\dd Y$ is positive on $\Gamma^+ \cap \overline{B}_{N^{-2}}$ and $\Gamma^+$ intersects $\partial B_{N^{-2}}$ at exactly one point. 

Let $z_2 \in \C^+$ be the point where $\Gamma^+$ and $\partial B_{N^{-2}}$ intersect.
Since $\Gamma_+$ is a path of steepest-descent, $\im G(z_2)=\im G(\gamma)=0$ and $G(z_2)<G(\gamma)$. 
From~\eqref{eq:G Taylor} and Lemma~\ref{G gamma bound}, 
\beq \label{contour outside large ball}
		G(\gamma)-G(z_2) = \left| G(z_2) - G(\gamma) \right|
	\ge \frac14 N^{1-8\epsilon} |z_2-\gamma|^2 - \sum_{j=3}^\infty \frac1{j} C_0^j N^{j-1+3\epsilon} |z_2-\gamma|^j
	\ge \frac18 N^{-3-8\epsilon}
\eeq
for all large enough $N$. 
On the other hand, consider $B_{N^{-3}}$, the ball of radius $N^{-3}$ centered at $\gamma$, and let $z_3 \in \C^+$ be the point where $\Gamma^+$ and $\partial B_{N^{-3}}$ intersect. Then, by a similar argument, 
\beq \label{contour inside small ball}
	G(\gamma) -G(z_3) = \left| G(z_3) - G(\gamma) \right| \le 2C_0^2 N^{-5+3\epsilon}
\eeq
for all large enough $N$ where $C_0$ is the constant in Lemma~\ref{G gamma bound}.

Next, we claim that, for any decreasing function $f: \Gamma^+ \to \R$, %we have
\beq \label{claim on decreasing function}
\int_{\Gamma^+} e^{f(z)} \dd y \geq 0.
\eeq
To check the claim, we parametrize the curve $\Gamma^+ = \Gamma^+ (t)$, $t \in [0, 1]$, with $\Gamma^+(0) = \gamma$ and $\Gamma^+(1) = -\infty$. 
Note that $\im \Gamma^+(0)=\im \Gamma^+(1)$. 
Suppose that $\im \Gamma^+ (t)$ increases on $(0, t_0)$ and decreases on $(t_0, 1)$. (In particular, it attains its maximum at $t=t_0$.) Then, we have
\beq \begin{split}
	\int_{\Gamma^+} e^{f(z)} \dd y 
	&= \int_0^1 e^{f(\Gamma^+(t))} \im (\Gamma^+)' (t) \dd t \\
	&= \int_0^{t_0} e^{f(\Gamma^+(t))} \im (\Gamma^+)' (t) \dd t + \int_{t_0}^1 e^{f(\Gamma^+(t))} \im (\Gamma^+)' (t) \dd t \\
	&\geq \int_0^{t_0} e^{f(\Gamma^+(t_0))} \im (\Gamma^+)' (t) \dd t + \int_{t_0}^1 e^{f(\Gamma^+(t_0))} \im (\Gamma^+)' (t) \dd t \\
	& =e^{f(\Gamma^+(t_0))} \int_0^1 \im (\Gamma^+)' (t) \dd t = 0.
\end{split} \eeq
This shows that the claim \eqref{claim on decreasing function} holds in this case. 
For the general case, suppose that there are real numbers 
$0<t_0<t_1<\cdots< t_{2k}<1$ such that $\im \Gamma^+ (t)$ increases on $(0, t_0), (t_1, t_2), \cdots, (t_{2k-1}, t_{2k})$ and decreases on $(t_0, t_1), (t_2, t_3), \cdots, (t_{2k}, 1)$. 
(From the explicit formula of $\im G(z)=0$, it is direct to check that $\Gamma^+$ approaches to $-\infty$ monotonically downward, and hence there are only finitely many such real numbers $t_i$'s.)
Set
$$
\Delta_{2p} = \im \Gamma^+(t_{2p}) - \im \Gamma^+(t_{2p-1}), \qquad \Delta_{2p+1} = \im \Gamma^+(t_{2p}) - \im \Gamma^+(2p+1), \qquad (p=0, 1, \cdots, k)
$$
with $t_{-1} := 0$ and $t_{2k+1} := 1$. Note that $\Delta_{2p}\geq 0$, $\Delta_{2p+1} \geq 0$, and
$$
\Delta_0 + \Delta_2 + \cdots + \Delta_{2p} \geq \Delta_1 + \Delta_3 + \cdots + \Delta_{2p+1}
$$
for any $p=0, 1, \cdots, k$ since $\Gamma^+$ is above the real axis. Applying the same strategy as in the case $k=0$, we obtain 
\beq \begin{split}
&\int_0^1 e^{f(\Gamma^+(t))} \im (\Gamma^+)' (t) \dd t \\
&\geq e^{f(\Gamma^+(t_0))} (\Delta_0 - \Delta_1) + e^{f(\Gamma^+(t_2))} (\Delta_2 - \Delta_3) + \cdots + e^{f(\Gamma^+(t_{2k}))} (\Delta_{2k} - \Delta_{2k+1}) \\
&\geq e^{f(\Gamma^+(t_2))} (\Delta_0 - \Delta_1 + \Delta_2 - \Delta_3) + e^{f(\Gamma^+(t_4))} (\Delta_4 - \Delta_5) +\cdots + e^{f(\Gamma^+(t_{2k}))} (\Delta_{2k} - \Delta_{2k+1}) \\
&\geq e^{f(\Gamma^+(t_{2k}))} (\Delta_0 - \Delta_1 + \Delta_2 - \Delta_3 + \cdots + \Delta_{2k} - \Delta_{2k+1}) \geq 0.
\end{split} \eeq
This proves the claim \eqref{claim on decreasing function}.

We apply~\eqref{claim on decreasing function} to the function 
$$
f(z) =
	\begin{cases}
	\frac{N}{2} \left( G(z_2) - G(\gamma) \right) & \text{ if } z \in \Gamma_+ \cap B_{N^{-2}}\\
	\frac{N}{2} \left( G(z) - G(\gamma) \right) & \text{ if } z \in \Gamma^+ \cap (B_{N^{-2}})^c, 
	\end{cases}
$$
and obtain 
\beq \label{K estimate 2}
\int_{\Gamma^+ \cap B_{N^{-2}}} \exp \left( \frac{N}{2} \left[ G(z_2) - G(\gamma) \right] \right) \dd y + \int_{\Gamma^+ \cap (B_{N^{-2}})^c} \exp \left( \frac{N}{2} \left[ G(z) - G(\gamma) \right] \right) \dd y \geq 0.
\eeq

Now, since $\dd y$ is positive in $\Gamma^+\cap B_{N^{-2}}$, we have the estimate (recall~\eqref{eq:define K})
\beq \begin{split} \label{K estimate 1}
 	\frac12K &= \int_{\Gamma^+} \exp \left( \frac{N}{2} \left[ G(z) - G(\gamma) \right] \right) \dd y \\
&= \int_{\Gamma^+ \cap B_{N^{-3}}} \exp \left( \frac{N}{2} \left[ G(z) - G(\gamma) \right] \right) \dd y + \int_{\Gamma^+ \cap (B_{N^{-3}})^c} \exp \left( \frac{N}{2} \left[ G(z) - G(\gamma) \right] \right) \dd y \\
&\geq \int_{\Gamma^+ \cap B_{N^{-3}}} \exp \left( \frac{N}{2} \left[ G(z_3) - G(\gamma) \right] \right) \dd y + \int_{\Gamma^+ \cap (B_{N^{-3}})^c} \exp \left( \frac{N}{2} \left[ G(z) - G(\gamma) \right] \right) \dd y.
\end{split} \eeq
Subtracting \eqref{K estimate 2} from \eqref{K estimate 1}, we find that
\beq \begin{split}
	\frac12K &\geq \int_{\Gamma^+ \cap B_{N^{-3}}} \left[ \exp \left( \frac{N}{2} \left[ G(z_3) - G(\gamma) \right] \right) - \exp \left( \frac{N}{2} \left[ G(z_2) - G(\gamma) \right] \right) \right] \dd y \\
	&\qquad + \int_{\Gamma^+ \cap (B_{N^{-3}})^c \cap B_{N^{-2}}} \left[ \exp \left( \frac{N}{2} \left[ G(z) - G(\gamma) \right] \right) - \exp \left( \frac{N}{2} \left[ G(z_2) - G(\gamma) \right] \right) \right] \dd y \\
	&\geq \int_{\Gamma^+ \cap B_{N^{-3}}} \left[ \exp \left( \frac{N}{2} \left[ G(z_3) - G(\gamma) \right] \right) - \exp \left( \frac{N}{2} \left[ G(z_2) - G(\gamma) \right] \right) \right] \dd y . 
\end{split} \eeq
Since $\int_{\Gamma^+ \cap B_{N^{-3}}} \dd y= \im z_3 \ge C|z_3-\gamma|= CN^{-3}$ for some constant $C>0$ (see~\eqref{eq: Gammaplus parabola} and~\eqref{eq:alphabd}), we find, 
using the estimates  \eqref{contour outside large ball} and \eqref{contour inside small ball}, that 
\beq \begin{split}
	\frac12 K &\geq CN^{-3} \left[ \exp \left( - C_0^2N^{-4+3\epsilon} \right) - \exp \left( - \frac1{16} N^{-2-8\epsilon} \right) \right] 
	\geq C N^{-5-8\epsilon}. 
\end{split} \eeq
This completes the proof of the lemma.
\end{proof}

We now prove Theorem \ref{thm:sup}.

\begin{proof}[Proof of Theorem \ref{thm:sup}]
From~\eqref{integral representation},~\eqref{n sphere}, and Lemma \ref{steepest descent sup}, we obtain
\beq
	Z_N = \frac{\sqrt{N}\beta}{\ii \sqrt{\pi} (2\beta e)^{N/2}} e^{\frac{N}{2} G(\gamma)} K (1+O(N^{-1}))
\eeq
with high probability, where $K$ satisfies $N^{-C}\le K\le C$. 
Thus, using Lemma~\ref{G gamma bound}, we find that
\beq \begin{split}
	F_N &= \frac{1}{N} \log Z_N = \frac{1}{2} \left[ G(\gamma)-1-\log(2\beta) \right]  + O(N^{-1} \log N) \\
	&= \beta \lambda_1  - \beta_c (\lambda_1-C_+) - \frac{1}{2}  \left( \int_{C_-}^{C_+} \log (C_+-z)d\nu(z)+ 1 +\log(2\beta) \right)+ O(N^{-1+4\epsilon}) \\
	&= \frac{1}{2} \left[  2\beta \lambda_1  - 2\beta_c (\lambda_1-C_+) -  \int_{C_-}^{C_+} \log (C_+-z)d\nu(z) -1 -\log(2\beta)\right] + O(N^{-1+4\epsilon})
\end{split} \eeq
with high probability.
Hence, we obtain that
\beq
	F_N - F(\beta) =  \left( \beta - \beta_c \right) (\lambda_1  -C_+)+ O(N^{-1 +4\epsilon}).
\eeq
where $F(\beta)$ is defined in~\eqref{eq:L sup}. 
Now the proof of the theorem follows from the edge universality, Condition \ref{cond:edge}. 
\end{proof}

\section{Third order phase transition} \label{sec:third}

In this section, we prove Theorem \ref{thm:third}.

\begin{proof}[Proof of Theorem \ref{thm:third}]
The convergence of $F_N$ to $F(\beta)$ in distribution follows from Theorem~\ref{thm:sub} and~\ref{thm:sup} since normal distribution and Tracy-Widom distribution have exponential tails. 

We now prove that $F(\beta)$ is $C^2$ but not $C^3$ at $\beta=\beta_c$. 
It suffices to prove the statement for 
\beq
\wt F(\beta) := \frac{1}{\beta} \left( F(\beta) + \frac{1 + \log (2\beta)}{2} \right).
\eeq
Set 
\beq \label{eq:f0beta}
f_0(\beta) = \int_{C_-}^{C_+} \log (\widehat\gamma - k) \dd \nu(k)
\eeq
as in~\eqref{eq:f_0}. 
Recall that $\widehat\gamma \equiv \widehat\gamma(\beta)$ is a function of $\beta$ and 
and it has a limit as $\beta\nearrow \beta_c$: 
$\widehat\gamma(\beta_c) = C_+$ (see the discussion after~\eqref{stieltjes gamma}).  
Hence, if we set 
\beq \label{eq:f0betac}
f_0(\beta_c) = \int_{C_-}^{C_+} \log (C_+ - k) \dd \nu(k),
\eeq
then $f_0(\beta)\to f_0(\beta_c)$ as $\beta \nearrow \beta_c$. 
With this definition,  
\beq
	\wt F(\beta) = 
	\begin{cases} \widehat\gamma(\beta) - \frac{1}{2\beta} f_0(\beta) , \qquad  & \beta<\beta_c,\\
	C_+ - \frac1{2\beta} f_0(\beta_c), \qquad & \beta\ge \beta_c.
	\end{cases}
\eeq
From this it easy to see that 
\beq\label{eq:diffsuplimits}
\lim_{\beta \searrow \beta_c} \partial_\beta \wt F(\beta) = \frac{f_0(\beta_c)}{2\beta_c^2}, \qquad \lim_{\beta \searrow \beta_c} \partial_\beta^2 \wt F(\beta) = -\frac{f_0(\beta_c)}{\beta_c^3}, \qquad \lim_{\beta \searrow \beta_c} \partial_\beta^3 \wt F(\beta) = \frac{3 f_0(\beta_c)}{\beta_c^4}.
\eeq

We now calculate the limits as $\beta\nearrow \beta_c$.
Since $\widehat\gamma(\beta)$ satisfies (see~\eqref{stieltjes gamma})
\beq \label{stieltjes gamma second}
	\int_{C_-}^{C_+} \frac{\dd \nu(k)}{\widehat\gamma(\beta) - k} = 2\beta,
\eeq
we find that
\beq
\partial_\beta f_0(\beta) = (\partial_\beta \widehat\gamma) \int_{C_-}^{C_+} \frac{\dd \nu(k)}{\widehat\gamma - k} = (2\beta) \partial_\beta \widehat\gamma.
\eeq
Thus,
\beq\label{eq:diff101}
\partial_\beta \wt F(\beta) = \partial_\beta \left( \widehat\gamma(\beta) - \frac{f_0(\beta)}{2\beta} \right)
= \frac{f_0(\beta)}{2\beta^2},
\eeq
and hence
\beq
\lim_{\beta \nearrow \beta_c} \partial_\beta \wt F(\beta) = \frac{f_0(\beta_c)}{2\beta_c^2} 
= \lim_{\beta \searrow \beta_c} \partial_\beta \wt F(\beta).
\eeq

Differentiating~\eqref{eq:diff101} again, we obtain
\beq\label{eq:diff102}
\partial_\beta^2 \wt F(\beta) = \frac{\partial_\beta \widehat\gamma}{\beta} - \frac{f_0(\beta)}{\beta^3}. 
\eeq
We claim that $\partial_\beta \widehat\gamma \to 0$ as $\beta \nearrow \beta_c$. 
This can be checked by differentiating \eqref{stieltjes gamma second}, 
\beq \label{stieltjes gamma derivative}
	-(\partial_\beta \widehat\gamma) \int_{C_-}^{C_+} \frac{\dd \nu(k)}{(\widehat\gamma - k)^2} = 2,
\eeq
and noting that the integral tends to $\infty$ as $\beta \nearrow \beta_c$ due to the square root decay of $\frac{\dd \nu}{dk}$.
Therefore, 
\beq
	\lim_{\beta \nearrow \beta_c} \partial_\beta^2 \wt F(\beta) = - \frac{f_0(\beta)}{\beta^3}
	= \lim_{\beta \searrow \beta_c} \partial_\beta^2 \wt F(\beta).
\eeq

Finally, after differentiating~\eqref{eq:diff102},
\beq
\partial_\beta^3 \wt F(\beta) = \frac{\partial_\beta^2 \widehat\gamma}{\beta} - \frac{\partial_\beta \widehat\gamma}{\beta^2} + \frac{3 f_0(\beta)}{\beta^4}. 
\eeq
Since $\partial_\beta \widehat\gamma \to 0$ as $\beta \nearrow \beta_c$, 
\beq
\lim_{\beta \nearrow \beta_c} \partial_\beta^3 \wt F(\beta) =  \frac{\lim_{\beta \nearrow \beta_c} \partial_\beta^2 \widehat\gamma}{\beta_c} + \frac{3 f_0(\beta_c)}{\beta_c^4}. 
\eeq
Comparing with~\eqref{eq:diffsuplimits}, it suffices to show that $\displaystyle\lim_{\beta \nearrow \beta_c} \partial_\beta^2 \widehat\gamma \neq 0$ to prove the theorem. 
Differentiating \eqref{stieltjes gamma derivative} once more, we obtain 
\beq \label{stieltjes gamma derivative 2}
	-(\partial_\beta^2 \widehat\gamma) \int_{C_-}^{C_+} \frac{\dd \nu(k)}{(\widehat\gamma - k)^2} + 2 (\partial_\beta \widehat\gamma)^2 \int_{C_-}^{C_+} \frac{\dd \nu(k)}{(\widehat\gamma - k)^3} = 0.
\eeq
Since $\frac{\dd \nu}{dk}$ has the square root decay, we have
\beq
	\frac2{\partial_\beta \widehat \gamma}= - \int_{C_-}^{C_+} \frac{\dd \nu(k)}{(\widehat\gamma - k)^2} \sim (\widehat\gamma - C_+)^{-1/2}, \qquad \int_{C_-}^{C_+} \frac{\dd \nu(k)}{(\widehat\gamma - k)^3} \sim (\widehat\gamma - C_+)^{-3/2},
\eeq
as $\beta\nearrow \beta_c$. 
We thus conclude from \eqref{stieltjes gamma derivative 2} that $\partial_\beta^2 \widehat\gamma  \sim 1$.
This completes the proof of the desired theorem.
\end{proof}

\begin{appendix}

\section{Appendix}

In this appendix, we evaluate various constants stated in Section~\ref{sec:examples}. 
As a main tool of the evaluation, we use the Stieltjes transform of the measure $\nu$, defined by
\beq
	m(z) = \int_{C_-}^{C_+} \frac{\dd \nu(x)}{x-z}, \qquad z \in \C\setminus [C_-, C_+).
\eeq
Note that due to the square root decay of $\frac{\dd \nu(x)}{dx}$ at $x=C_+$, $m(C_+)$ is well-defined. 

\subsection{Wigner matrix} \label{appendix Wigner}

We only consider real Wigner matrices. The complex case can be evaluated by the same way and we skip the details. 
The limiting spectral measure for real Wigner matrices is $\frac{\dd \nu(x)}{dx} = \frac1{2\pi} \sqrt{4-x^2}$, $-2\le x\le 2$. Hence 
\beq \label{stieltjes wigner}
	m(z) = \frac{-z+\sqrt{z^2 -4}}{2},
\eeq
and we find that 
\beq
	\beta_c = \frac{1}{2} \int_{C_-}^{C_+} \frac{\dd \nu(x)}{C_+-x} = -\frac{1}{2} m(2)= \frac12.
\eeq

We now evaluate $\widehat\gamma$ defined in~\eqref{stieltjes gamma}. 
This equation is equivalent to the equation $-m_{\nu}(\widehat\gamma) = 2\beta$. 
Solving this equation, we find that 
\beq \label{eq:gamma for Wigner real}
	\widehat\gamma = 2\beta + \frac{1}{2\beta}.
\eeq

To evaluate~\eqref{L sub010} and~\eqref{eq:L sup}, we note that 
\beq
	\int_{-2}^2 \log (z-x) \dd \nu(x) = \frac14 z \left( z - \sqrt{z^2 -4} \right)+ \log \left( z + \sqrt{z^2 -4} \right)-\log 2 - \frac{1}{2}. 
\eeq
for $z\in \C\setminus (-\infty, 2]$. This follows by noting that 
\beq \label{eq:Stiel integ}
 	-m(z)= \frac{d}{dz} \left[ \frac{1}4 z \left( z - \sqrt{z^2 -4} \right) + \log \left( z + \sqrt{z^2 -4} \right) \right]
\eeq
and $\int_{-2}^2 \log (z-x) \dd \nu(x)= \log z + O(z^{-1})$ as $z \to \infty$. 
Thus, using~\eqref{eq:gamma for Wigner real}, we obtain 
\beq
	F(\beta)=
	\begin{cases}
	\beta^2 & \text{ if } \beta < 1/2 \\
	2\beta - \frac{\log (2\beta) + 3/2}{2} & \text{ if } \beta > 1/2.
	\end{cases}
\eeq

\bigskip

We now evaluate $\ell(\beta)$ and $\sigma^2(\beta)$ for the case when $\beta<\beta_c$. 
First, since 
\beq
	\int_{C_-}^{C_+} \frac{\dd \nu(x)}{(z-x)^2} = m'(z) = -\frac12 + \frac{z}{2\sqrt{z^2-4}}, \qquad z\in \C\setminus [-2,2], 
\eeq
~\eqref{eq: ell one def} becomes 
\beq \label{eq:ell one Wig}
	\ell_1(\beta) 
	=\log (2\beta) - \frac{1}{2} \log \left( m'(\widehat\gamma(\beta)) \right) 
	=\frac12 \log (1-4\beta^2).
\eeq
Second, we evaluate (see~\eqref{Chebyshev formula}) $\tau_\ell(\varphi)= t_\ell(\widehat\gamma(\beta))$ for  $\ell=0,1,2,$ and $4$, 
where 
\beq \label{Chebyshev formula Wig2}
	t_\ell(z)= \frac1{\pi} \int_{-2}^2 \log(z-x) \frac{T_\ell(x/2)}{\sqrt{4-x^2}} dx, \qquad z\in \C\setminus(-\infty, 2),
\eeq
We start with evaluating 
\beq \label{Chebyshev formula Wig3}
	t_\ell'(z)= \frac1{\pi} \int_{-2}^2  \frac{T_\ell(x/2)}{(z-x)\sqrt{4-x^2}} dx. 
\eeq
Recall the recursions $T_{\ell+2}(y)=2yT_{\ell+1}(y)-T_{\ell}(y)$, $\ell\ge 0$, and the orthogonality relation $\int_{-1}^1  \frac{y^kT_\ell(y)}{\sqrt{1-y^2}} dy=0$ for all integers $0\le k<\ell$, for the Chebyshev polynomials. 
From this we obtain the recursions 
\beq \label{Chebyshev formula Wig4}
	t_{\ell+2}'(z)= z t_{\ell+1}'(z)-t_{\ell}'(z)
\eeq
for $\ell\ge 0$. Since $T_0(x/2)=1$ and $T_1(x/2)=x/2$, it is easy to check from residue calculus that
\beq \label{Chebyshev formula Wig5}
	t_{0}'(z) = \frac1{\sqrt{z^2-4}} , \qquad 
	t_{1}'(z) = \frac{z}{2\sqrt{z^2-4}} -\frac12. 
\eeq
Hence 
\beq \label{Chebyshev formula Wig6}
	t_{2}'(z)= \frac{z^2-2}{2\sqrt{z^2-4}} -\frac{z}2, 
	\qquad 
	t_4'(z)= \frac{z^4-4z^2+2}{2\sqrt{z^2-4}} -\frac{z^3}2+z. 
\eeq
Taking the anti-derivaties and noting from~\eqref{Chebyshev formula Wig2} that $\tau_0(z)=\log z+ O(1/z)$ and $t_\ell(z)=  O(1/z)$, $\ell\ge 1$, as $z\to \infty$ using the orthogonality relations of the Chebyshev polynomials, we find that 
\beq \label{Chebyshev formula Wig7}
	t_{0}(z)= \log\left( z+ \sqrt{z^2-4} \right) -\log 2, 
	\qquad 
	t_1(z)= \frac12 \sqrt{z^2-4}-\frac{z}2, 
\eeq
and
\beq \label{Chebyshev formula Wig8}
	t_2(z)= \frac{z}4 \sqrt{z^2-4}-\frac{z^2}4 + \frac12, 
	\qquad 
	t_4(z)= \frac{1}8 (z^3-2z)\sqrt{z^2-4}-\frac{z^4}8 + \frac{z^2}2 - \frac14.
\eeq
Hence,
\beq \label{Chebyshev formula Wig9}
	\tau_{0}(\varphi)= -\log (2\beta), 
	\quad 
	\tau_{1}(\varphi)= -2\beta, 
	\quad 
	\tau_{2}(\varphi)= -2\beta^2,
	\quad 
	\tau_{4}(\varphi)= -4\beta^4.
\eeq

Third, $V_{GOE}(\varphi)$ (see~\eqref{GOE variance}) is evaluated from the  lemma~\ref{eq:evaluation of int of F} below. 
Hence we find that
\beq
	V_{GOE}(\varphi)= \frac1{2\pi^2} L(\widehat\gamma(\beta), \widehat\gamma(\beta)) = -2 \log (1-4\beta^2)
\eeq
where $F(z,w)$ is defined in~\eqref{eq: F defined here}. 
We also have (see~\eqref{GOE mean})
\beq
	M_{GOE}(\varphi)= \frac12 \log (1-4\beta^2).
\eeq
Therefore, combining  with~\eqref{eq:ell one Wig},~\eqref{Chebyshev formula Wig9}, we obtain~\eqref{eq:ell Wigner} and~\eqref{eq:sigma Wigner}.

\bigskip

It remains to prove following Lemma. 

\begin{lem} \label{eq:evaluation of int of F}
Set
\beq \label{eq: F defined here}
	L(z,w)=\int_{-2}^2\int_{-2}^2 \left( \log(z-x)-\log(z-y)\right)\left( \log(w-x)-\log(w-y)\right)Q(x,y) \dd x \dd y, 
\eeq
for $z,w\in\C\setminus(-\infty, 2]$, 
where
\beq
	Q(x,y)= \frac{4-xy}{(x-y)^2\sqrt{4-x^2}\sqrt{4-y^2}}.
\eeq
Then
\beq
	L(z,w)=2\pi^2 \log \left[ \frac{(z+R(z))(w+R(w))}{2(zw-4+R(z)R(w))} \right] , \qquad R(z)=\sqrt{z^2-4},
\eeq
for $z,w\in\C\setminus(-\infty, 2]$, where $R(z)$ is defined with branch cut $[-2,2]$. 
\end{lem}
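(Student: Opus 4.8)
The plan is to recognize $L(z,w)$ as the polarization of the GOE linear–statistics variance functional, reduce it to a power series in the Chebyshev moments $t_\ell(z)$ of $x\mapsto\log(z-x)$, evaluate those moments in closed form, sum the series, and finish with an elementary algebraic identity. It suffices to prove the formula for real $z,w>2$: both sides are holomorphic in $(z,w)$ on the product of slit planes $(\C\setminus(-\infty,2])^2$, which is connected, so the general case follows by analytic continuation. For $z>2$ the function $\varphi_z(x):=\log(z-x)$ is real-analytic on $[-2,2]$. Observing that $Q(x,y)=\frac{1}{(x-y)^2}\cdot\frac{4-xy}{\sqrt{4-x^2}\sqrt{4-y^2}}$ is exactly the kernel in the second expression for $V_{GOE}$ in \eqref{GOE variance}, we have $L(z,w)=\mathrm I[\varphi_z,\varphi_w]$, where $\mathrm I[\varphi,\psi]:=\iint\frac{(\varphi(x)-\varphi(y))(\psi(x)-\psi(y))}{(x-y)^2}\cdot\frac{4-xy}{\sqrt{(4-x^2)(4-y^2)}}\,\dd x\,\dd y$ is the symmetric bilinear form whose quadratic form is $2\pi^2V_{GOE}$. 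The double integral converges absolutely — the $(x-y)^{-2}$ is absorbed by the two difference quotients near the diagonal, and $4-xy$ vanishes at the corners $(\pm2,\pm2)$ — so, using $\mathrm I[\varphi,\varphi]=2\pi^2V_{GOE}(\varphi)=4\pi^2\sum_{\ell\ge1}\ell\,\tau_\ell(\varphi)^2$ from \eqref{GOE variance} and the linearity of $\tau_\ell$, polarization yields
\[
L(z,w)=4\pi^2\sum_{\ell=1}^\infty \ell\,\tau_\ell(\varphi_z)\,\tau_\ell(\varphi_w)=4\pi^2\sum_{\ell=1}^\infty \ell\,t_\ell(z)\,t_\ell(w),
\]
where $t_\ell(z)=\tau_\ell(\varphi_z)$ is the function in \eqref{Chebyshev formula Wig2}.

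Next I would obtain a closed form for $t_\ell(z)$, extending the values of $t_0,t_1,t_2,t_4$ already computed in the appendix to all $\ell\ge1$. From the recursion \eqref{Chebyshev formula Wig4} with initial data \eqref{Chebyshev formula Wig5}, the characteristic roots are $\tfrac12(z\pm R(z))$ with $R(z)=\sqrt{z^2-4}$; since $t_\ell'(z)$ from \eqref{Chebyshev formula Wig3} tends to $0$ as $z\to\infty$, the root $\tfrac12(z+R(z))>1$ must be absent, leaving $t_\ell'(z)=R(z)^{-1}r(z)^\ell$ with $r(z):=\tfrac12(z-R(z))$. Since $r'(z)=-r(z)/R(z)$ and $r(z)\to 0$, $t_\ell(z)\to0$ as $z\to\infty$ for $\ell\ge1$, integration gives $t_\ell(z)=-\dfrac{r(z)^\ell}{\ell}$ for all $\ell\ge1$ (consistent with the appendix values, noting $r(2\beta+\tfrac1{2\beta})=2\beta$).

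Substituting this and using $|r(z)|<1$ on $\C\setminus[-2,2]$ (the inverse Joukowski map carries the slit plane into the open unit disk, so the series converges absolutely),
\[
L(z,w)=4\pi^2\sum_{\ell=1}^\infty \ell\cdot\frac{r(z)^\ell}{\ell}\cdot\frac{r(w)^\ell}{\ell}=4\pi^2\sum_{\ell=1}^\infty\frac{(r(z)r(w))^\ell}{\ell}=-4\pi^2\log\bigl(1-r(z)r(w)\bigr).
\]
From $z^2-R(z)^2=4$ we get $r(z)=\dfrac{2}{z+R(z)}$, so with $A:=(z+R(z))(w+R(w))$ we have $1-r(z)r(w)=\dfrac{A-4}{A}$ and hence $L(z,w)=-2\pi^2\log\dfrac{(A-4)^2}{A^2}$. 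It remains to prove the algebraic identity $(A-4)^2=2AB$, where $B:=zw-4+R(z)R(w)$. Writing $p:=zR(w)+wR(z)$ and $q:=R(z)R(w)$ and using $R(z)^2=z^2-4$, $R(w)^2=w^2-4$, one checks $A=B+4+p$, $A-4=B+p$, and $p^2-B^2=8(zw-4+q)=8B$, so $(A-4)^2-2AB=(B+p)^2-2(B+4+p)B=p^2-B^2-8B=0$. Therefore $\dfrac{(A-4)^2}{A^2}=\dfrac{2B}{A}$ and
\[
L(z,w)=-2\pi^2\log\frac{2B}{A}=2\pi^2\log\frac{(z+R(z))(w+R(w))}{2\bigl(zw-4+R(z)R(w)\bigr)},
\]
which is the assertion; analytic continuation in $(z,w)$ removes the restriction $z,w>2$.

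The real work is keeping the branch of the logarithm straight along the chain $-4\pi^2\log(1-r(z)r(w))=-2\pi^2\log\frac{(A-4)^2}{A^2}=-2\pi^2\log\frac{2B}{A}=2\pi^2\log\frac{A}{2B}$. I would handle this by noting that $1-r(z)r(w)$ lies in the disk $\{\,\zeta:|\zeta-1|<1\,\}$, which has positive real part, so $\log(\zeta^2)=2\log\zeta$ there with the principal branch, and by checking that for real $z,w>2$ all of $A$, $B$, and $A/(2B)$ are positive (in fact $A>4$ and $A>2B$), which fixes the branch before the continuation. The remaining ingredients — the polarization identity, the closed form for $t_\ell$, and the identity $(A-4)^2=2AB$ — are routine once \eqref{GOE variance} and the recursion in the appendix are in hand.
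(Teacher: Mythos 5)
Your proof is correct, and it takes a genuinely different route from the paper's. The paper differentiates $L$ with respect to both $z$ and $w$, rewrites the resulting double real integral as a double contour integral over closed curves encircling $[-2,2]$ (using the branch jump of $R=\sqrt{\cdot^2-4}$), evaluates $L_{zw}$ in closed form by residues, and then integrates twice, pinning down the two integration ``constants'' from the $z\to\infty$ and $w\to\infty$ asymptotics. You instead recognize $L$ as the bilinear form whose quadratic form is $2\pi^2 V_{GOE}$, invoke the known Chebyshev-series expression $V_{GOE}(\varphi)=2\sum_\ell \ell\,\tau_\ell(\varphi)^2$ from \eqref{GOE variance}, polarize to get $L(z,w)=4\pi^2\sum_\ell \ell\,t_\ell(z)t_\ell(w)$, compute $t_\ell(z)=-r(z)^\ell/\ell$ in closed form by solving the recursion \eqref{Chebyshev formula Wig4} (with $r(z)=(z-R(z))/2$ being the inverse Joukowski map into the unit disk), sum the resulting $\sum (r(z)r(w))^\ell/\ell$ as a logarithm, and close with the algebraic identity $(A-4)^2=2AB$. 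I checked the closed form for $t_\ell$, the identity $p^2-B^2=8B$, the final branch bookkeeping, and the convergence arguments; they all hold. One small imprecision: you say ``$4-xy$ vanishes at the corners $(\pm2,\pm2)$,'' but it vanishes only at $(2,2)$ and $(-2,-2)$; at the other two corners the integrand is controlled instead by $(x-y)^2$ staying bounded away from zero. This does not affect the absolute convergence, which is what you actually need. The main trade-off between the two proofs: yours relies on the literature identity between the series and double-integral forms of $V_{GOE}$, which the paper quotes without proof, whereas the paper's argument is self-contained modulo elementary residue calculus; in exchange, your approach reuses the Chebyshev moment computation already required elsewhere in the appendix, yields the tidy formula $t_\ell(z)=-r(z)^\ell/\ell$ for all $\ell$, and makes the logarithmic structure of the answer transparent rather than having it emerge from two antiderivatives.
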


\begin{proof}[Proof of Lemma~\ref{eq:evaluation of int of F}]
Consider the second derivative 
\beq
	L_{zw}(z,w)=\int_{-2}^2 \left[ \int_{-2}^2\left( \frac1{z-x}-\frac1{z-y}\right)\left( \frac1{w-x}-\frac1{w-y}\right)\frac{4-xy}{(x-y)^2\sqrt{4-x^2}\sqrt{4-y^2}} \dd y \right] \dd x .
\eeq
Recall that $R(x)=\sqrt{x^2-4}$ is defined with the branch cut $[-2,2]$. Using this, it is easy to check that 
\beq
	L_{zw}(z,w)=-\frac14 \int_{\Sigma} \left[ \int_{\Sigma'} \left( \frac1{z-x}-\frac1{z-y}\right)\left( \frac1{w-x}-\frac1{w-y}\right)\frac{4-xy}{(x-y)^2R(x)R(y)} \dd y \right] \dd x 
\eeq
where $\Sigma$ and $\Sigma'$ are simple closed contours, oriented positively, that contain the interval $[-2,2]$ in its interior and the points $z$ and $w$ in its exterior, and $\Sigma'$ lies in the interior of $\Sigma$. 
By residue Calculus, we obtain
\beq
	L_{zw}(z,w)=\frac{-2\pi^2}{(z-w)^2} \left( 1+ \frac{4-zw}{R(z)R(w)} \right). 
\eeq
Integrating with respect to $z$, 
\beq \label{eq:Fw one}
	L_{w}(z,w)=\frac{-2\pi \left( R(z)-R(w)\right)}{(z-w)R(w)} +C_1(w)
\eeq
for some function $C_1(w)$ of $w$. 
The function $C_1(w)$ is determined by noting that, from~\eqref{eq:Fw one}, 
\beq
	L_{w}(z,w)=\frac{-2\pi }{R(w)} +C_1(w) + O(z^{-1})
\eeq
as $z\to\infty$ with fixed $w$, while from~\eqref{eq: F defined here}
\beq
	L_{w}(z,w)=L(z,w)=\int_{-2}^2\int_{-2}^2 \left( \log(z-x)-\log(z-y)\right)\left( \frac1{w-x}-\frac1{w-y}\right)Q(x,y) \dd x \dd y, 
\eeq
is $O(z^{-1})$. Hence $C_1(w)= \frac{2\pi^2}{R(w)}$. 
Integrating with respect to $w$, we find that 
\beq
	L(z,w)=2\pi^2 \log \left[ \frac{(w+R(w))}{2(zw-4+R(z)R(w))} \right] +C_2(z)
\eeq
for some function $C_2(z)$ of $z$. By considering the asymptotics as $w\to\infty$, we find that 
$C_2(z)=2\pi^2 \left[ \log(z+R(z))-\log w\right]$, and this completes the proof of Lemma. 
\end{proof}

\subsection{Sample covariance matrix} \label{appendix sample}

We again only consider real case here. Complex case is similar. 
The Stieltjes transform of the Marchenko-Pastur distribution $\dd \nu$~\eqref{eq:MP law} is given by
\beq
	m_{\nu}(z) = \frac{-z+\sqrt{C_+C_-}+\sqrt{(z-C_+)(z-C_-)}}{2z}, \qquad z\in \C\setminus [C_-, C_+]. 	
\eeq
Here $C_+=(\sqrt{d}+1)^2$ and $C_-=(\sqrt{d}-1)^2$. 
Hence, for real sample covariance matrices, we have
\beq
	\beta_c = \frac{1}{2} \int_{C_-}^{C_+} \frac{\dd \nu(x)}{C_+ -x} = -\frac{m_{\nu}(C_+)}{2}
	=  \frac{1}{2 (\sqrt d +1)}. 
\eeq

In the high temperature case, $\beta < \beta_c$, we get from the definition of $\widehat\gamma$ that 
\beq \label{eq:mnu an beta}
	-m_{\nu}(\widehat\gamma) = 2\beta.
\eeq
Solving the equation, we find that
\beq \label{eq:hamma for sco}
	\widehat\gamma = \frac{d}{1-2\beta}+ \frac{1}{2\beta}.
\eeq
We note that $\widehat\gamma$ is a decreasing function in $\beta$ and satisfies $\widehat\gamma>C+$ for $0<\beta< \beta_c$. 
In order to evaluate $L(\beta)$ in~\eqref{L sub010}, note that 
\beq \begin{split}
	&\frac{\dd}{\dd\beta} \int_{-2}^2 \log (\widehat\gamma-\lambda) \dd \nu(\lambda) 
	=\left( \frac{\dd \widehat\gamma}{\dd \beta} \right) \frac{\dd}{\dd\widehat\gamma} \int_{-2}^2 \log (\widehat\gamma-\lambda) \dd \nu(\lambda) \\
	&=  -m_{\nu}(\widehat\gamma) \left( \frac{\dd \widehat\gamma}{\dd \beta} \right)
	= 2\beta \left( \frac{2d}{(1-2\beta)^2} - \frac1{2\beta^2} \right)
\end{split} \eeq
using~\eqref{eq:mnu an beta} and~\eqref{eq:hamma for sco}. Hence, by taking the antiderivative, 
\beq \label{eq:f0 for sub sc}
\begin{split}
	\int_{-2}^2 \log (\widehat\gamma(\beta)-\lambda) \dd \nu(\lambda) 
&= \frac{2\beta }{1-2\beta}d + d \log (1-2\beta) - \log (2\beta),
\end{split} \eeq
where the constant of integration is determined by the asymptotics that the left hand side behaves as 
$$
\log \widehat\gamma + O(\widehat\gamma^{-1}) = -\log (2\beta) + O(\beta)
$$
as $\widehat\gamma \to \infty$, or $\beta \to 0$. 
Hence
$$
	L(\beta)=	- \frac{d}{2\beta} \log (1-2\beta) 
$$
for $\beta<\beta_c$. 

The limit of the free energy per particle $L(\beta)$ for $\beta>\beta_c$ is obtained easily using~\eqref{eq:f0 for sub sc} and noting that $C_+=\widehat\gamma(\beta_c)$.

We now evaluate $\ell$ and $\sigma^2$ for $\beta<\beta_c$. 
From~\eqref{eq: ell one def}, we find that 
\beq
	\ell_1 = \log(2\beta) - \frac1{2} \log (m'_{\nu}(\widehat\gamma))
	= \frac1{2} \log (1-4B^2)
\eeq
where we set
\beq
	B= \frac{\beta \sqrt{d}}{1-2\beta}. 
\eeq
On the other hand, in other to evaluate $M(\varphi)$ and $V(\varphi)$ for $\varphi(x)= \log(\widehat\gamma -x)$, 
we observe that~\eqref{eq:sc linear stat chang} implies that 
\beq
	\Phi(x)= \log(\sqrt{d}) +\psi(x), \qquad \psi(x)= \log \left( 2B+\frac1{2B}-x \right) 
\eeq
where $B$ is given above. 
Note that $\psi(x)$ is the same function as $\varphi(x)=\log(\widehat\gamma -x)$ for Wiger matrices with the change that $\beta$ is replaced by $B$. 
Moreover, we have $\tau_\ell(c+f)=\tau_{\ell}(f)$ for $\ell>0$ and $\tau_0(c+f)=c+\tau_0(f)$ for a constant $c$ and function $f$, from the definition of $\tau_\ell$ and the orthogonality of  Chebyshev polynomials. 
From this and the results obtained for Wigner matrices, we can easily find $M(\varphi)$ and $V(\varphi)$ for sample covariance matrices.

\end{appendix}

%\bibliographystyle{plain}
%\bibliographystyle{abbrv}
%\bibliography{ref}

\def\cydot{\leavevmode\raise.4ex\hbox{.}}

\end{document}